\newcommand{\Right}{\mathcal{R}}
\newcommand{\Left}{\mathcal{L}}
\newcommand{\E}{\mathfrak{exp}}
\newcommand{\C}{\mathfrak{con}}
\newcommand{\sat}{\operatorname{sat}}
\newcommand{\M}{\mathcal M}
\newcommand{\Set}{\mathcal S}
\newcommand{\mif}{\mbox{if} ~}
\DeclareMathOperator{\gin}{gin}
\DeclareMathOperator{\reg}{reg}
\DeclareMathOperator{\charf}{char}
\numberwithin{equation}{section}
\theoremstyle{plain}   
\newtheorem{thm}{\bf Theorem}[section]
\newtheorem{cor}[thm]{Corollary}     
\newtheorem{lem}[thm]{Lemma}         
\newtheorem{prop}[thm]{Proposition}  
\theoremstyle{definition}
\newtheorem{defn}[thm]{Definition}   
\newtheorem{rem}[thm]{Remark}        
\newtheorem{remdefn}[thm]{Remark and Definition}   
\newtheorem{ex}[thm]{Example}        
\newtheorem{alg}[thm]{Algorithm}     
\newtheorem{question}[thm]{Question} 
\title{Algorithms for strongly stable ideals}
\author{Dennis Moore}
\author{Uwe Nagel}
\address{Department of Mathematics,
University of Kentucky, 715 Patterson Office Tower,
Lexington, KY 40506-0027, USA}
\email{d.k.moore@uky.edu}
\email{uwe.nagel@uky.edu}
\thanks{This work was partially supported by a grant from the Simons 
Foundation (\#208869 to Uwe Nagel). The authors were also partially 
supported by the National Security Agency under Grant Number H98230-09-1-0032.}
\begin{document}

\subjclass[2010]{14Q20, 13P99}

\keywords{Hilbert polynomial, lexsegment ideal, Betti numbers,
Castelnuovo-Mumford regularity, complexity}

\begin{abstract}
Strongly stable monomial ideals are important in Algebraic Geometry,
Commutative Algebra, and Combinatorics. Prompted, for example, by
combinatorial approaches for studying Hilbert schemes and the
existence of maximal total Betti numbers among saturated ideals with
a given Hilbert polynomial, in this note we present three algorithms
to produce {\em all} strongly stable ideals with certain prescribed
properties: the saturated strongly stable ideals with a given
Hilbert polynomial, the almost lexsegment ideals with a given
Hilbert polynomial,  and the saturated strongly stable ideals with a
given Hilbert function. We also establish results for estimating the
complexity of our algorithms.
\end{abstract}

\maketitle


\section{Introduction}

Strongly stable monomial ideals arise naturally in Algebraic
Geometry,  Commutative Algebra, and Combinatorics. In fact, Galligo,
Bayer and Stillman showed that the generic initial ideal of a
homogeneous ideal is Borel-fixed. In characteristic zero,
Borel-fixed ideals are strongly stable (see, e.g., \cite{DE} or
\cite{HeHi}).  Shifting is a combinatorial technique that studies a
given simplicial complex by modifying the given complex to a simpler
one while preserving essential properties. Strongly stable ideals
figure prominently in the algebraic approach to shifting (see, e.g.,
\cite{HeHi}). A Hilbert scheme parametrizes the closed subschemes of
a projective space with a fixed Hilbert polynomial. Its scheme
structure is very complex. Strongly stable ideals are the basis for
combinatorial approaches for studying Hilbert schemes (see, e.g.,
\cite{RH}, \cite{AR}, \cite{RS}, \cite{NS}).

Building on work by Reeves \cite{AR}  and Gehrs \cite{KG}, in this
note we present  an algorithm that produces {\em all} saturated
strongly stable ideals of a polynomial ring with a given Hilbert
polynomial. We restrict ourselves to saturated ideals for two
reasons. With respect to the reverse lexicographic order, the
generic initial ideal of an ideal is saturated if and only if the
ideal is saturated, and the homogeneous ideal of a closed subscheme
is saturated. Moreover, the number of strongly stable ideals with a
given Hilbert polynomial is not finite.

We also develop two related algorithms. Recently, Caviglia and Murai
(see \cite{CM}) established that in the set of all saturated
homogeneous ideals of a polynomial ring  with a given Hilbert
polynomial there exists an ideal whose total Betti numbers are at
least as large as the total Betti numbers of all other ideals in
this set. This generalizes a result of Valla (see \cite{GV}) about
ideals with constant Hilbert polynomial. Thanks to a result by
Bigatti, Hulett, and Pardue, there must be an ideal with maximal
Betti numbers that is saturated, strongly stable, and a lexsegment
ideal when considered in polynomial ring in one less variable. We
call such an ideal almost lexsegment (see Definition
\ref{almost-lex-ideal}). We show that a modification of our first
algorithm produces all almost lexsegment ideals to a given Hilbert
polynomial by computing only almost lexsegment ideals at every step.
The algorithm reveals in particular that, for a given Hilbert
polynomial, there can be many almost lexsegment ideals that achieve
the maximal Betti numbers.

Our third algorithm produces all saturated strongly stable ideals
with a given Hilbert function. They form a subset of the ideals
obtained by the first algorithm. However, we present a more direct
and more efficient algorithm for computing them.

This note is organized as follows. In Section \ref{sec:basics},  we
briefly recall some well-known properties of strongly stable ideals.
For unexplained terminology and background, we refer to \cite{DE},
\cite {HeHi}, and \cite{RH2}.

In Section \ref{sec:exp} we introduce certain algorithmic
operations ---  called contractions and expansions of monomials ---
on the set of minimal generators of strongly stable ideals. These
operations were first proposed in \cite{AR} and also considered in
\cite{KG}. For greater efficiency, we use suitable modifications of
these operations,  and we describe their effect on the Hilbert
polynomial.

The theoretical core for our algorithms is provided by Theorem
\ref{link-all-ideals}. It states that all saturated strongly stable
ideals with the same Hilbert polynomial can be computed by using
expansions of minimal monomial generators. The proof of this result
is constructive and leads to a new algorithm for finding all
saturated ideals having a prescribed Hilbert polynomial (see
Algorithm \ref{alg-stable-Hilbpoly}). It also includes a sharp
estimate on the number of steps the algorithm needs to generate a
strongly stable ideal starting from a trivial ideal.

Algorithm \ref{alg-stable-Hilbpoly} is  modified in Section
\ref{almost-lex-section} in order to produce all almost lexsegment
ideals to a given Hilbert polynomial (see Algorithm
\ref{alg-lex-Hilbpoly}). These ideals represent all the Hilbert
functions of saturated homogeneous ideals with the given Hilbert
polynomial. We also present an algorithm for directly generating all
saturated strongly stable ideals with a fixed Hilbert function (see
Algorithm \ref{alg-Hilbert-series}).

In Section \ref{sec:final}
we  discuss consequences of the complexity estimate in Theorem 
\ref{link-all-ideals}. In particular, we show that the number of 
saturated strongly stable ideals in a polynomial ring in $n$ 
variables with a given Hilbert polynomial $p$ does depend only on $p$ 
and not on $n$, once $n$ is sufficiently large (see Proposition 
\ref{min-num-vars}). Fixing the Hilbert polynomial, we also describe 
the ideals with the worst Castelnuovo-Mumford regularity (see Theorem 
\ref{thm:worst-reg}).

We implemented all algorithms presented in this note in the computer 
algebra system \textit{Macaulay2} \cite{GS}. The files can be 
downloaded at {\tt http://www.ms.uky.edu/$\sim$dmoore/M2}.

\section{Strongly stable ideals and some properties}
\label{sec:basics}

Throughout this note we denote by $R:= K[x_0,\ldots,x_n]$ the polynomial 
ring over an arbitrary field $K$. Also, we denote by $R^{(1)} := K[x_0, 
\ldots, x_{n-1}]$ the polynomial ring where the last variable has been 
removed, and, more generally, $R^{(j)} := K[x_0, \ldots, x_{n-j}]$ is 
the polynomial ring where the last $j$ variables have been removed. We 
use multi-index notation: If $A = (a_0,\ldots,a_n)$ is an $n$-tuple of 
non-negative integers we set $x^A = x_0^{a_0} \cdot \ldots \cdot 
x_n^{a_n}$. Moreover, if $x^A \neq 1$, the {\em max index} of $x^A$ is
$$ \max (x^A):= \max \{i : a_i > 0 \} = \max \{ i : x_i | x^A \}. $$

\begin{defn} \label{stable-ideals}
A monomial ideal $I \subset R$ is called a \emph{strongly stable ideal} 
if, for every monomial $x^A \in I$ and $x_j | x^A$, we have $\frac {x_i} 
{x_j} \cdot x^A \in I$ whenever $0 \leq i < j$.
\end{defn}

\begin{remdefn} \label{double-sat}
Let $I \subset R$ be a strongly stable ideal.
\begin{itemize}
\item[(i)] To determine whether an ideal is strongly stable, it is 
sufficient to check that the minimal monomial generators of  the ideal 
satisfy the criterion in Definition \ref{stable-ideals}.
\item[(ii)] The { saturation} of $I$ is the ideal $\sat_{x_n}(I)$ that 
is obtained from $I$ by setting $x_n=1$ in every monomial of $I$.
\item[(iii)] The \emph{double saturation} of $I$ is the extension ideal 
$\sat_{x_{n-1}, x_n}(I)$ in $R$ of the saturation of $\sat_{x_n}(I) 
\cap R^{(1)} \subset R^{(1)}$.  It is obtained from $I$ by setting 
$x_n = x_{n-1} = 1$.
\end{itemize}
\end{remdefn}

Throughout this note we use the lexicographic  order, $>_{lex}$, for 
comparing monomials of a given degree. Let $x^B = x_0^{b_0} x_1^{b_1} 
\cdots x_n^{b_n}$ and $x^C = x_0^{c_0} x_1^{c_1} \cdots x_n^{c_n}$ be 
two monomials of $R$ of the same degree. Recall that $x^B >_{lex} x^C$, 
if the first nonzero entry of the vector $(b_0 - c_0, \; b_1 - c_1, \; 
\ldots, \; b_n - c_n)$ is positive.

If $A$ is a graded $K$-algebra we denote its  Hilbert function by $h_A 
(j) = \dim_K [A]_j$, its Hilbert polynomial by $p_A$, and its Hilbert 
series by $H_A = \sum_{j \geq 0} h_A (j) \cdot t^j$. The Hilbert 
functions of graded $K$-algebras are completely classified. In 
particular, if $h$ is such a Hilbert function with $h (1) \leq n+1$, 
then there is a lexsegment ideal $L_h \subset R$ such that, for every 
integer $j$,\; $h_{R/L_h} (j) = h(j)$. Recall that a lexsegment ideal 
is a monomial ideal $I \subset R$ such that, for every integer $j$,\; 
$[I]_j$ is spanned by the first $\dim_K [I]_j$ monomials of $[R]_j$ in 
the lexicographic order. Lexsegment ideals are strongly stable.

At times we will abuse language and say that a  homogeneous ideal $I$ 
of $R$ has Hilbert function $h$ or Hilbert polynomial $p$ if $R/I$ has 
this Hilbert function or polynomial.

Let $p \in \mathbb{Q}[z]$ be the Hilbert polynomial of a  standard
graded $K$-algebra of dimension $d+1>0$. Then there are unique
integers $b_0 \geq b_1 \geq b_2 \geq \ldots \geq b_d > 0$ such that
\begin{equation}
\label{formula-Hilbert-polynomial}
p(z) = \sum_{i=0}^{d} \left[\binom{z+i}{i+1}-\binom{z+i-b_i}{i+1}\right].
\end{equation}

With respect to this representation, we recall the  lexicographic
ideal associated to a given Hilbert polynomial as introduced by
Macaulay. Some of the properties of this ideal have been studied by
Bayer in \cite{DB}. The lexicographic ideal is called a
\emph{universal lex} ideal in \cite{MH} and \cite{CM}. In order to
keep this note more self-contained and for the convenience of the
reader we provide short proofs for the results below.

\begin{thm} \label{L_p}
Let $p \neq 0$ be a Hilbert polynomial of a quotient of $R$. Then 
there is a unique saturated lexsegment ideal $L_p \subset R$ such that 
the Hilbert polynomial of $R/L_p$ is $p$. It is called the {\em 
lexicographic ideal} to $p$. The ideal $L_p$ is generated by the set 
of monomials
\begin{align*}
\{& x_0,x_1,\ldots,x_{n-d-2}, x_{n-d-1}^{a_d+1},
 x_{n-d-1}^{a_d} \cdot x_{n-d}^{a_{d-1}+1},\\
& x_{n-d-1}^{a_d} \cdot x_{n-d}^{a_{d-1}} \cdot x_{n-d+1}^{a_{d-2}+1},
\ldots,\\
& x_{n-d-1}^{a_d} \cdot x_{n-d}^{a_{d-1}} \cdot x_{n-d+1}^{a_{d-2}}
\cdot \ldots \cdot x_{n-3}^{a_2} \cdot x_{n-2}^{a_1+1}, \\
& x_{n-d-1}^{a_d} \cdot x_{n-d}^{a_{d-1}} \cdot x_{n-d+1}^{a_{d-2}}
\cdot \ldots \cdot x_{n-2}^{a_1} \cdot x_{n-1}^{a_0} \},
\end{align*}
where $p$ is written as in Equation $\eqref{formula-Hilbert-polynomial}$ 
and $a_d:= b_d, a_{d-1}:= b_{d-1} - b_d, \ldots, a_0:= b_0 - b_1$ 
(thus, $b_i = a_d + a_{d-1} + \ldots + a_i$), $0 \leq i \leq d$.
\end{thm}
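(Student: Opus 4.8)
The plan is to prove the two assertions --- existence and uniqueness --- by a direct, essentially bookkeeping argument, exploiting the classification of Hilbert functions already recalled and the explicit shape of lexsegment ideals.

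For existence, I would start from the representation \eqref{formula-Hilbert-polynomial} and the substitution $b_i = a_d + a_{d-1} + \cdots + a_i$, so that the listed monomials are genuinely increasing in degree and "fan out" one variable at a time. Call $J$ the monomial ideal generated by the displayed set. First I would check that $J$ is a lexsegment ideal: in each degree $j$, I must verify that $[J]_j$ consists of the first $\dim_K[J]_j$ monomials of $[R]_j$ in $>_{lex}$. This follows because the generators are themselves initial segments in their degrees and because passing to the next degree by multiplying by $x_0,\ldots,x_n$ preserves the "initial segment'' property exactly when the generating set has the staircase shape written above --- this is the standard fact that the lex ideal in each degree is built by lexicographic growth, and the chosen generators are precisely the "corners'' of that staircase. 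Next I would compute the Hilbert polynomial of $R/J$. Because $x_0,\ldots,x_{n-d-2}\in J$, the quotient $R/J$ is annihilated by these variables, so $R/J$ is a module over $R^{(n-d-1)} = K[x_{n-d-1},\ldots,x_n]$, a polynomial ring in $d+2$ variables; there the remaining generators of $J$ are exactly the lex generators for a $d$-dimensional quotient. A direct count of the monomials not in $J$ in large degree $z$ --- organized by the largest index $\ell$ with $x_\ell$ appearing, and using that the exponent of each intermediate variable is bounded by the corresponding $a_i$ before the count "opens up'' in $x_n$ --- yields exactly $\sum_{i=0}^d\bigl[\binom{z+i}{i+1}-\binom{z+i-b_i}{i+1}\bigr]$, i.e. $p(z)$. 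The combinatorial identity matching this count to \eqref{formula-Hilbert-polynomial} is the one genuinely fiddly step; I would prove it by induction on $d$, peeling off the top variable $x_n$ (which contributes the $\binom{z+i}{i+1}$ growth) and invoking the inductive formula for the truncated ideal in $R^{(1)}$. Finally, saturation of $J$ is immediate from its generators: $x_n$ divides no minimal generator, so $\sat_{x_n}(J)=J$ by Remark and Definition \ref{double-sat}(ii), hence $J$ is saturated, and $J=L_p$ works.

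For uniqueness, suppose $L\subset R$ is any saturated lexsegment ideal with $p_{R/L}=p$. In each degree $j$, $[L]_j$ is determined by the single number $\dim_K[L]_j = \binom{n+j}{n} - h_{R/L}(j)$, so $L$ is completely determined by the Hilbert function $h_{R/L}$. Thus I must show that a \emph{saturated} quotient of $R$ that is lex in every degree has its Hilbert function pinned down by its Hilbert polynomial. The point is that for a saturated lexsegment ideal, $h_{R/L}(j)=p(j)$ for all $j\ge 0$: if $h_{R/L}(j_0) > p(j_0)$ for some $j_0$, the lex structure forces $x_n$ to divide some minimal generator of $L$ in degree $\le j_0$ (otherwise the quotient would be a polynomial-like module in $x_n$ and the Hilbert function would already equal the polynomial from degree $0$), contradicting saturation; and $h_{R/L}(j)\ge p(j)$ always holds by Macaulay-type monotonicity for lex ideals. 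Hence $h_{R/L}$ equals the Hilbert function of $R/L_p$, so $[L]_j=[L_p]_j$ for every $j$, i.e. $L=L_p$.

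I expect the main obstacle to be not any single deep idea but the careful verification that the explicitly listed generators do produce the Hilbert polynomial \eqref{formula-Hilbert-polynomial} --- that is, correctly translating the "staircase'' of exponents $a_d, a_{d-1},\ldots,a_0$ into the binomial sum with the $b_i$'s. Setting up the induction on $d$ by removing the last variable $x_n$, and keeping track of how the offsets $b_i$ relate the $d$-dimensional and $(d-1)$-dimensional cases, is where the argument needs to be written with care; everything else (lex-ness of the generators, saturation, and the uniqueness reduction to Hilbert functions) is routine given the facts recalled above.
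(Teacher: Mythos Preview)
Your existence argument would work but is more laborious than the paper's. Rather than peeling off $x_n$, the paper inducts on $d$ via the short exact sequence
\[
0 \longrightarrow (R/L(a_0,\ldots,a_{d-1}))(-a_d) \xrightarrow{\;x_{n-d-1}^{a_d}\;} R/L(a_0,\ldots,a_d) \longrightarrow R/(x_0,\ldots,x_{n-d-2},x_{n-d-1}^{a_d}) \longrightarrow 0,
\]
where $L(a_0,\ldots,a_{d-1})$ is the lexicographic ideal for the degree-$(d-1)$ polynomial with parameters $a_0,\ldots,a_{d-1}$ in the \emph{same} ring $R$. This strips off $a_d$ directly and both flanking terms have known Hilbert polynomials. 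Your route via $R^{(1)}$ has two wrinkles: the restricted ideal $J\cdot R^{(1)}$ is \emph{not} saturated in $R^{(1)}$ (its last generator is divisible by $x_{n-1}$), so it is not literally an $L_{p'}$ to which the inductive hypothesis applies; and even after you get $\Delta p_{R/J}=\Delta p$, you must still determine the constant of integration. Both are fixable, but the paper's exact sequence sidesteps them.

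Your uniqueness argument, however, contains a genuine error. The assertion that a saturated lexsegment ideal satisfies $h_{R/L}(j)=p(j)$ for all $j\ge 0$ is false: already for $p(z)=a_0$ constant one has $L_p=(x_0,\ldots,x_{n-2},x_{n-1}^{a_0})$ and $h_{R/L_p}(0)=1<a_0$ whenever $a_0>1$. In particular the supporting claim ``$h_{R/L}(j)\ge p(j)$ always holds by Macaulay-type monotonicity'' is wrong---when there is an inequality it goes the other way. The correct (and shorter) argument, which the paper merely alludes to, is: if $L$ and $L_p$ are both saturated lexsegment with Hilbert polynomial $p$, then $[L]_j=[L_p]_j$ for all $j\gg 0$ since a lexsegment space in a fixed degree is determined by its dimension; and two saturated ideals that agree in all sufficiently large degrees are equal, because any monomial $m\in [L_p]_j\setminus [L]_j$ would satisfy $m\,x_n^k\in L$ for $k\gg 0$, contradicting $L=L:x_n^{\infty}$.
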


\begin{proof} Because of its importance and for the convenience of the 
reader we include a proof. Set $L(a_0,\ldots,a_d) := L_p$. It is 
clearly a lexsegment ideal and saturated. We use induction on $d \geq 
0$ in order to compute the Hilbert polynomial of the quotient. If 
$d=0$, then we have $R/L(a_0) = K[x_0,\ldots,x_n]/(x_0,\ldots,x_{n-2},
x_{n-1}^{a_0}) \cong K[x_{n-1},x_n] / (x_{n-1}^{a_0})$, and  thus the 
Hilbert polynomial is $p_{R/L(a_0)}(z)=a_0=\binom{z}{1}-\binom{z-a_0}
{1}=p$, as claimed.

Let $d > 0$. Then multiplication by $x_{n-d-1}^{a_d}$ provides the 
exact sequence
$$ \begin{CD}
0 \rightarrow (R/L(a_0,\ldots,a_{d-1}))(-a_d) @>x_{n-d-1}^{a_d}>>
R/L(a_0,\ldots,a_d) \rightarrow R/(x_0,\ldots,x_{n-d-2},x_{n-d-1}^{a_d})
\rightarrow 0.
\end{CD} $$
Using the induction hypothesis we conclude that $p_{R/L(a_0, \ldots 
,a_{d})}=p$.

The uniqueness statement follows from the fact that $L_p$ is a 
lexsegment ideal and saturated.
\end{proof}

Note that the set of generators of the lexicographic ideal $L_p$ 
given in Theorem \ref{L_p} is not minimal when $a_0 = 0$.

The ideal $L_p$ has alternative characterizations.

\begin{prop} \label{prop-smallest-Hilb}
\begin{itemize}
\item[(a)] Let $L_h \subset R$ be a lexsegment ideal with Hilbert 
polynomial $p$, i.e., if $j \gg 0$, then $p(j) = h(j)$. Then the 
saturation of $L_h$ is the ideal $L_p \subset R$.
\item[(b)]  Let $R/I$ be a  graded quotient  of $R$ with Hilbert 
polynomial $p$. Then, for all integers $j$:
$$ h_{R/I} (j) \geq h_{L_p} (j). $$
\end{itemize}
\end{prop}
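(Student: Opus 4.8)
The plan is to prove part (a) first and then deduce part (b) from it using Macaulay's theorem on Hilbert functions. For part (a), I would argue as follows. Since $L_h$ is a lexsegment ideal, its saturation $\operatorname{sat}_{x_n}(L_h)$ is again a lexsegment ideal: in each degree $j$, $[\operatorname{sat}_{x_n}(L_h)]_j$ consists of an initial segment of monomials in the lex order, because the saturation is obtained by setting $x_n=1$ and the lex order is compatible with this operation on monomials not divisible by $x_n$ together with the ordering of monomials in the remaining variables. (Concretely, a monomial $m$ lies in the saturation in degree $j$ iff $m \cdot x_n^s \in L_h$ for some $s$, and one checks this defines a lexsegment.) Now $\operatorname{sat}_{x_n}(L_h)$ is a saturated lexsegment ideal, and it has the same Hilbert polynomial as $L_h$, namely $p$, since saturation does not change the Hilbert polynomial. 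By the uniqueness statement in Theorem \ref{L_p}, the only saturated lexsegment ideal with Hilbert polynomial $p$ is $L_p$, so $\operatorname{sat}_{x_n}(L_h) = L_p$.

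For part (b), fix $j$ and let $h = h_{R/I}$. By Macaulay's classification of Hilbert functions, there is a lexsegment ideal $L_h \subset R$ with $h_{R/L_h}(k) = h(k)$ for all $k$; in particular $L_h$ has Hilbert polynomial $p$, since $R/I$ does. The key inequality is that saturation can only enlarge an ideal in each degree, so $h_{L_h}(k) \leq h_{\operatorname{sat}_{x_n}(L_h)}(k)$ for every $k$. Combining this with part (a), which gives $\operatorname{sat}_{x_n}(L_h) = L_p$, we obtain
$$ h_{R/I}(j) = h_{R/L_h}(j) \geq h_{R/L_p}(j) = h_{L_p}(j), $$
wait — I need to be careful with the direction: $h_{L_h}(k) \le h_{L_p}(k)$ means $h_{R/L_h}(k) \ge h_{R/L_p}(k)$, so indeed $h_{R/I}(j) = h_{R/L_h}(j) \ge h_{R/L_p}(j)$, which is the claim once we note the statement uses $h_{L_p}$ for $h_{R/L_p}$ per the abuse of language introduced earlier. (If the convention in the statement is literally that $h_{L_p}$ denotes $h_{R/L_p}$, this is exactly what we want; I would insert a sentence clarifying the notation.)

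The main obstacle I anticipate is the claim in part (a) that the saturation of a lexsegment ideal is again a lexsegment ideal. This is the only step requiring genuine combinatorial verification rather than a formal citation: one must show that for each degree $j$, the set of monomials $m$ of degree $j$ with $m\cdot x_n^s \in L_h$ for some $s \geq 0$ is an initial segment in the lex order. I would prove this by a direct argument — if $m'$ is a degree-$j$ monomial with $m' >_{lex} m$ and $m$ is in the saturation via $m \cdot x_n^s \in L_h$, then either $m'$ is already in $L_h$ (if $m' >_{lex} m$ forces $m' \cdot x_n^s >_{lex} m \cdot x_n^s$ when $x_n \nmid m'$, one uses strong stability or the lexsegment property of $[L_h]_{j}$ directly) or a suitable multiple of $m'$ by a power of $x_n$ is, using that $m' \cdot x_n^s >_{lex} m \cdot x_n^s$ in degree $j+s$ and $[L_h]_{j+s}$ is a lexsegment. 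Everything else is either bookkeeping or a direct appeal to Theorem \ref{L_p} and Macaulay's theorem.
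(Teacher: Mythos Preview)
Your proposal is correct, and for part (b) it is essentially the paper's argument: form $L_h$ via Macaulay's theorem, use part (a) to get $L_h \subset L_p$, and conclude $h_{R/I}(j) = h_{R/L_h}(j) \ge h_{R/L_p}(j)$.

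For part (a), however, you take a longer route than necessary. You work to show that the saturation of a lexsegment ideal is again a lexsegment ideal (your anticipated ``main obstacle''), and then invoke the uniqueness in Theorem~\ref{L_p}. The paper sidesteps this combinatorial verification entirely: since $L_h$ and $L_p$ are both lexsegment ideals and $h(j) = p(j)$ for $j \gg 0$, their graded pieces $[L_h]_j$ and $[L_p]_j$ coincide for $j \gg 0$ (a lexsegment in a fixed degree is determined by its dimension). Because $L_p$ is saturated and agrees with $L_h$ in all large degrees, it must be the saturation of $L_h$. This avoids ever having to argue that $\sat_{x_n}(L_h)$ is lexsegment. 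Your approach has the minor advantage of establishing that auxiliary fact explicitly, which could be useful elsewhere, but for the proposition at hand the paper's two-line argument is cleaner. Your observation about the notation $h_{L_p}$ standing for $h_{R/L_p}$ is correct and matches the paper's stated convention.
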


\begin{proof}
(a) Since $L_h$ and $L_p$ are both lexsegment ideals  and $h(j)=p(j)$ 
whenever $j \gg 0$, we get
$$ [L_h]_j = [L_p]_j, $$
whenever $j \gg 0$. As the ideal $L_p$ is saturated, it follows that 
$L_p$ is the saturation of $L_h$. (b) Denote by $h$ the Hilbert 
function of $R/I$. Then part (a) implies $L_h \subset L_p$, and the 
claim follows.
\end{proof}

We conclude this section with  formulae for certain invariants of 
stable ideals (in particular, strongly stable ideals), which will be 
useful later.  Note that these invariants only depend on the max 
indices and the degrees of the minimal generators of the ideal.

\begin{rem} \label{Hilbert-polynomial-and-series}
If $I \subset R$ is a saturated strongly stable ideal with minimal 
monomial generators $\{x^{A_1}, \ldots, x^{A_r}\}$, then let 
$l_i=\max(x^{A_i})$ and $d_i=\deg(x^{A_i})$, for all $1 \leq i \leq r$. 
The Hilbert polynomial and (nonreduced) Hilbert series of $R/I$ are
\begin{equation} \label{compute-Hilbert-polynomial}
p_{R/I}(z)=\binom{z+n}{n}-\sum_{i=1}^r \binom{z+n-d_i-l_i}{n-l_i}
\end{equation}
and
\begin{equation} \label{compute-Hilbert-series}
H_{R/I}(t)=\left(1-\sum_{i=1}^{r}(1-t)^{l_i}t^{d_i}\right)(1-t)^{-n-1}.
\end{equation}
The total Betti numbers of the ideal $I$ are
\begin{equation} \label{compute-Betti-numbers}
\beta_j(I)=\sum_{i=1}^r\binom{l_i}{j}.
\end{equation}
Equations \eqref{compute-Hilbert-series} and 
\eqref{compute-Betti-numbers} follow from the Eliahou-Kervaire 
resolution for stable monomial ideals (see \cite{EK}, p.\ 16); 
Equation \eqref{compute-Hilbert-polynomial} is a direct 
consequence of \eqref{compute-Hilbert-series}.
\end{rem}

\section{Expansions and contractions of monomials}
\label{sec:exp}

Throughout the remainder of this note,  $I \subsetneqq R = K[x_0,
\ldots,x_n]$ always denotes a saturated strongly stable ideal and 
$G(I)$ the 
set of its minimal monomial generators. If $n \leq 1$, then these 
ideals are principal. Thus, it is harmless to assume $n \geq 2$. 
At times, we will abuse terminology by saying that $I$ has Hilbert 
polynomial $p$ if $p$ is actually the Hilbert polynomial of the 
quotient $R/I$.

We first define left-shifts and right-shifts for monomials, and 
then use left-shifts and right-shifts to define contractions and 
expansions of monomials. We adapt Reeves's definitions for 
left-shifts and right-shifts of monomials and for contractions of 
monomials (see \cite{AR} and Remarks \ref{rem-left-right-shift}(iii) 
and \ref{rem-contr} below). Expansions will play a central role 
in the algorithm to compute all saturated strongly stable ideals 
to a given Hilbert polynomial.

\begin{defn} \label{right-left-shifts}
Let $x^{A} \in R$ be a monomial of positive degree.
\begin{itemize}
\item[(i)] The set of \emph{right-shifts} of $x^A$ is 
$$\Right(x^{A}):=\left\{x^A \frac{x_{i+1}}{x_i}:x_i|x^A, 
0 \leq i < n-1 \right\}.$$
\item[(ii)] The set of \emph{left-shifts} of $x^A$ is 
$$\Left(x^{A}):=\left\{x^A \frac{x_{i-1}}{x_i}:x_i|x^A, 
0 < i \leq n-1 \right\}.$$
\end{itemize}
\end{defn}

\begin{ex} \label{ex-right-left-shift}
Consider the monomial $x_1^2x_3 \in K[x_0,\ldots,x_5]$. As  its 
right-shifts we get
$$ \Right(x_1^2x_3) = \left\{x_1x_2x_3, x_1^2x_4 \right\}. $$
For its left-shifts we obtain
$$ \Left(x_1^2x_3) = \left\{x_0x_1x_3, x_1^2x_2 \right\}. $$
\end{ex}

\begin{rem} \label{rem-left-right-shift}
\begin{itemize}
\item[(i)] Observe that all monomials in $\Left(x^{A})$ and 
$\Right(x^{A})$ have the same degree as $x^{A}$. Furthermore, 
every monomial in $\Left(x^A)$ is larger than $x^A$ in the 
lexicographic order, and every monomial in $\Right(x^A)$ is less 
than $x^A$. In particular, $\Left(x^{A})\cap\Right(x^{A}) = 
\varnothing$ and neither of the sets, $\Left(x^{A})$ nor 
$\Right(x^{A})$, contains the monomial $x^{A}$ itself.
\item[(ii)] The set of left-shifts of any monomial of the form 
$x_0^k$ is empty ($\Left(x_0^k) = \varnothing$). This fact will 
be important below.
\item[(iii)] The original definitions for left-shifts and 
right-shifts in \cite{AR} included redundant monomials. The above 
definitions provide the smallest sets which can be used to 
determine whether an ideal will continue to be strongly stable 
after adding or removing minimal monomial generators (see Lemma 
\ref{contr-expan-stable-ideals}).
\end{itemize}
\end{rem}

Next, we introduce expansion and contractions.

\begin{defn} \label{expan-contr}
Let $x^A$ be a monomial of $R$.
\begin{itemize}
\item[(i)] If $x^A \neq 1$ is a minimal generator of $I$ such 
that $G(I) \cap \Right(x^A) = \varnothing$, then we call $x^A$ 
\emph{expandable in} $I$ (or simply \emph{expandable} if the 
ideal is understood). The \emph{expansion of $x^A$ in} $I$ is 
defined to be the ideal $I^{\E}$ generated by the set 
$$G(I^{\E}):=\left(G(I)\setminus\left\{x^A\right\}\right)\cup
\left\{x^A\cdot x_r,x^A\cdot x_{r+1},\ldots,x^A\cdot x_{n-1}\right\},$$ 
where $r = \max (x^A)$.

If $I = R$ and $x^A = 1$, then we set $I^{\E} := (x_0,\ldots,x_{n-1})$.

\item[(ii)] If $x^A \neq 1$ is a monomial in $R$ such that $x^A 
\cdot x_{n-1} \in G(I)$ (so $x^A \notin I$) and $\Left(x^A) 
\subset I$, then we call $x^A$ \emph{contractible in} $I$ (or 
simply \emph{contractible} if the ideal is understood). The 
\emph{contraction of $x^A$ in} $I$ is defined to be the ideal 
$I^{\C}$ generated by the set 
$$G(I^{\C}):=\left(G(I)\cup\left\{x^A\right\}\right)\setminus
\left\{x^A\cdot x_r,x^A\cdot x_{r+1},\ldots,x^A\cdot x_{n-1}\right\},$$
where $r = \max (x^A)$.

If $x_{n-1} \in G(I)$ and $x^A = 1$, then we set $I^{\C}:=(1)=R$.
\end{itemize}
\end{defn}

We note  that expandable monomials have been studied elsewhere.

\begin{rem} \label{Borel-generators}
The expandable monomials of a strongly stable ideal are exactly 
the Borel generators; compare our Definition \ref{expan-contr}(i) 
with Proposition 2.13 in \cite{FMS}.
\end{rem}

\begin{ex} \label{ex-expan}
Consider the saturated strongly stable ideal $I:= (x_0^3,x_0^2x_1,
x_0^2x_2) \subset K[x_0,x_1,x_2,x_3]$. The monomial $x_0^2x_2$ is 
expandable in $I$ because the monomial in $\Right(x_0^2x_2) = 
\{x_0x_1x_2\}$ is not a minimal generator of $I$. The expansion 
of $x_0^2x_2$ in $I$ is generated by 
$$G(I^{\E})=G(I)\setminus\{x_0^2x_2\}\cup\{x_0^2x_2^2\}=
\{x_0^3,x_0^2x_1,x_0^2x_2^2\}.$$

Now the monomial $x_0^2x_2$ is contractible in $I^{\E} = (x_0^3, 
x_0^2x_1, x_0^2x_2^2)$ since it is not contained in $I^{\E}$ and 
$\Left(x_0^2x_2) = \{x_0^2x_1\}$ is in $I^{\E}$. The contraction 
of $x_0^2x_2$ in $I^{\E}$ is the ideal $I$ we started with.

Similarly, the monomial $x_0^2$ is contractible in $I$ because it 
is not in the ideal, the monomial $x_0^2x_2$ is a minimal generator 
of $I$, and $\Left(x_0^2) = \varnothing \subset I$. The contraction 
of $x_0^2$ in $I$ is generated by 
$$G(I^{\C})= G(I)\cup\{x_0^2\}\setminus\{x_0^3,x_0^2x_1,x_0^2x_2\}=
\{x_0^2\}.$$

Now the monomial $x_0^2$ is expandable in $I^{\C} = (x_0^2)$ since 
it is the only minimal generator (so the set of right-shifts is 
automatically disjoint from the set of minimal generators of the 
ideal). The expansion of $x_0^2$ in $I^{\C}$ is the ideal $I$ we 
started with.
\end{ex}

As seen in this example, the contraction and expansion of a monomial 
in a saturated strongly stable ideal are inverse operations. This 
will be a useful fact.

\begin{lem} \label{expan-contr-inverse-ops}
Let $x^A \in R$ be a monomial.
\begin{itemize}
\item[(a)] If $x^A$ is expandable in $I$, then $x^A$ is contractible 
in the resulting expansion $I^{\E}$. The contraction of $x^A$ in 
$I^{\E}$ is $I$.
\item[(b)] If $x^A$ is contractible in $I$, then $x^A$ is expandable 
in the resulting contraction $I^{\C}$. The expansion of $x^A$ in 
$I^{\C}$ is $I$.
\end{itemize}
\end{lem}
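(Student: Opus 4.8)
The plan is to verify both directions by checking, in turn, each of the three conditions in Definition \ref{expan-contr}: that the relevant monomial lies in (or outside) the ideal, that the appropriate shift-set is contained in the ideal, and that the generating set transforms back correctly. I would treat part (b) first, since the contraction case is slightly more delicate, and then observe that part (a) follows by a symmetric argument; alternatively one proves (a) and derives (b), the two being genuinely inverse.

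For part (a): suppose $x^A$ is expandable in $I$, so $x^A \in G(I)$ and $G(I) \cap \Right(x^A) = \varnothing$, and set $r = \max(x^A)$. In $I^{\E}$ the generators $x^A x_r, \ldots, x^A x_{n-1}$ have been added and $x^A$ removed; in particular $x^A x_{n-1} \in G(I^{\E})$. To see $x^A$ is contractible in $I^{\E}$ I must check $x^A \notin I^{\E}$ and $\Left(x^A) \subset I^{\E}$. The first holds because every minimal generator of $I$ other than $x^A$ is a minimal generator of $I^{\E}$, none of these divides $x^A$ (else $x^A$ would not have been a minimal generator of $I$), and the new generators $x^A x_i$ all strictly properly contain $x^A$ as a divisor, so none divides $x^A$. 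For $\Left(x^A) \subset I^{\E}$: any left-shift $y = x^A x_{i-1}/x_i$ with $x_i \mid x^A$ and $0 < i \le n-1$ lies in $I$ (since $I$ is strongly stable and $x^A \in I$), so $y$ is divisible by some minimal generator of $I$; that generator cannot be $x^A$ itself (as $y$ is a left-shift, $y \ne x^A$ and $\deg y = \deg x^A$, so $x^A \nmid y$), hence it is a generator of $I^{\E}$ as well, giving $y \in I^{\E}$. Finally the contraction formula removes $x^A x_r, \ldots, x^A x_{n-1}$ from $G(I^{\E})$ and reinserts $x^A$, literally undoing the expansion step, so $(I^{\E})^{\C} = I$; I should note the edge case $I = R$, $x^A = 1$, where $I^{\E} = (x_0, \ldots, x_{n-1})$ and contracting $1$ returns $R$ by the stipulated convention.

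For part (b): suppose $x^A$ is contractible in $I$, so $x^A x_{n-1} \in G(I)$, $x^A \notin I$, and $\Left(x^A) \subset I$; set $r = \max(x^A)$. Then $G(I^{\C}) = (G(I) \cup \{x^A\}) \setminus \{x^A x_r, \ldots, x^A x_{n-1}\}$, and I must show $x^A$ is expandable in $I^{\C}$, i.e.\ $x^A \in G(I^{\C})$ and $G(I^{\C}) \cap \Right(x^A) = \varnothing$. Membership is clear from the formula; minimality of $x^A$ as a generator of $I^{\C}$ needs that no other generator of $I^{\C}$ divides $x^A$ --- generators inherited from $I$ cannot divide $x^A$ since $x^A \notin I$. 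For the right-shift condition: a right-shift $z = x^A x_{i+1}/x_i$ with $x_i \mid x^A$, $0 \le i < n-1$, has $\max(z) \le \max\{\max(x^A), i+1\}$, and one checks $z \ne x^A x_j$ for each $j \in \{r, \ldots, n-1\}$ by a degree/support comparison (the key point being $\deg z = \deg x^A < \deg(x^A x_j)$), so the removed monomials do not obstruct; and $z$ is not an inherited generator of $I$ because --- here is the one genuinely substantive point --- if some $x^A x_j \in G(I)$ has a right-shift landing on a minimal generator one needs strong stability of $I$ to preclude it, or more directly: $z \in \Right(x^A)$ would, if $z \in I$, force (via strong stability applied to move $z$ back toward $x^A$, or via the contractibility hypothesis $\Left(x^A) \subset I$ combined with the assumption $x^A \notin I$) a contradiction with $x^A \notin I$. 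Then the expansion formula reinserts $x^A x_r, \ldots, x^A x_{n-1}$ and deletes $x^A$, recovering $G(I)$, so $(I^{\C})^{\E} = I$; again I would dispatch the convention case $x_{n-1} \in G(I)$, $x^A = 1$.

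The main obstacle I anticipate is the right-shift disjointness claim in part (b) --- specifically showing $G(I^{\C}) \cap \Right(x^A) = \varnothing$, because this is exactly the place where the hypotheses $x^A \notin I$ and $\Left(x^A) \subset I$ must be combined rather than used separately, and one has to be careful that a right-shift of $x^A$ is not accidentally one of the inherited generators of $I$ (the generators $x^A x_j$ that were removed have strictly larger degree, so they are automatically excluded, but an \emph{inherited} generator of the same degree as $x^A$ could in principle coincide with a right-shift and must be ruled out using strong stability of $I$ together with $x^A \notin I$). Everything else is bookkeeping with the explicit generating-set formulas in Definition \ref{expan-contr}, together with the already-recorded fact (Remark \ref{rem-left-right-shift}(i)) that left-shifts and right-shifts preserve degree and shift strictly in the lexicographic order, which immediately separates the degree-$\deg x^A$ monomials from the degree-$(\deg x^A + 1)$ monomials $x^A x_j$ involved in the operations.
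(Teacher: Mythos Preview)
Your proposal is correct and follows the same approach as the paper, namely a direct verification from Definition \ref{expan-contr}; the paper simply states that ``these observations follow directly from Definition \ref{expan-contr}'' without giving any details, so your write-up is considerably more explicit than what appears there. One small point you gloss over in part (b): when you assert that the expansion of $x^A$ in $I^{\C}$ ``recovers $G(I)$'', you implicitly use that \emph{every} $x^A x_j$ for $r \le j \le n-1$ (not just $x^A x_{n-1}$) is a minimal generator of $I$; this does hold, since strong stability forces $x^A x_j \in I$ from $x^A x_{n-1} \in I$, and minimality follows because any proper divisor $(x^A/x_k)x_j$ would, again by strong stability applied with $k < j$, force $x^A \in I$.
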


\begin{proof}
These observations follow directly from Definition \ref{expan-contr}.
\end{proof}

\begin{rem} \label{rem-contr}
Following \cite{KG}, our Definition \ref{expan-contr}(ii) differs 
from Reeves's original definition in Appendix A.2 of \cite{AR} in two 
places as we insist on $x^A \cdot x_{n-1} \in G(I)$, but require only 
$\Left(x^A) \subset I$ instead of $\Left(x^A) \subset G(I)$. The first 
change is necessary for Lemma \ref{expan-contr-inverse-ops}(ii); the 
second is essential to establish Lemma \ref{contr-to-double-sat} (see 
also Example \ref{ex-contr}).
\end{rem}

Contractions and expansions are defined so that they will produce 
saturated strongly stable ideals. The proof is straightforward, but 
is included nonetheless.

\begin{lem} \label{contr-expan-stable-ideals}
If a monomial $x^A$ is contractible or expandable in $I$, then $I^{\C}$ 
or $I^{\E}$ is a saturated strongly stable ideal, respectively.
\end{lem}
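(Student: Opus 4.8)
The plan is to verify the two defining properties of a saturated strongly stable ideal --- namely, strong stability of the minimal generating set (invoking Remark and Definition \ref{double-sat}(i)) and saturation with respect to $x_n$ --- directly from the combinatorial descriptions in Definition \ref{expan-contr}. By Lemma \ref{expan-contr-inverse-ops}, the expansion and contraction operations are mutually inverse, so it suffices to treat the expansion case: if $x^A$ is expandable in $I$ and $J := I^{\E}$ is shown to be a saturated strongly stable ideal, then the contraction case follows because contracting a contractible monomial in $I$ produces an ideal $I^{\C}$ whose expansion is $I$; since $I$ is by standing assumption saturated and strongly stable, and since $I^{\C}$ is obtained from $I^{\C}$'s own expansion by a contraction, one can run the same verification with the roles reversed, or simply observe that $I^{\C}$ has the same minimal generators as some ideal whose expansion we have already analyzed. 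I would state this reduction up front and then focus on expansions.

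For the expansion $J = I^{\E}$, write $r = \max(x^A)$ and $G(J) = (G(I) \setminus \{x^A\}) \cup \{x^A x_r, x^A x_{r+1}, \ldots, x^A x_{n-1}\}$. First I would check that $G(J)$ is a strongly stable set of monomials. Take a monomial $u \in G(J)$, a variable $x_j \mid u$, and $i < j$; I must show $\frac{x_i}{x_j} u \in J$. If $u \in G(I)$ this is clear since $I$ is strongly stable and $I \subseteq J$ (note $G(I) \setminus \{x^A\}$ generates an ideal contained in $J$, and $x^A$ itself lies in $J$ because $x^A x_r, \ldots$ do and strong stability will recover it --- more carefully, $x^A \in J$ since $x^A x_r = x^A x_r$ is a generator and $\frac{x_r}{x_r}$ is trivial... actually $x^A$ need not lie in $J$). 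The cleaner route: for $u = x^A x_k$ with $r \le k \le n-1$, a basic move either (a) lowers the exponent of $x_k$ down to something still $\ge$ some generator, or (b) acts on a variable dividing $x^A$. In case (a), if $x_k \mid x^A$ already then $\frac{x_i}{x_k}(x^A x_k) = (\frac{x_i}{x_k}x^A) x_k$, and $\frac{x_i}{x_k} x^A \in I$ since $I$ is strongly stable (as $i < k$, and $i \ge r$ would still need care, but $i < k \le n-1$ and we can chase to a generator); in case (b) where the move replaces $x_j \mid x^A$ by $x_i$, we get $(\frac{x_i}{x_j} x^A) x_k$, and since $\frac{x_i}{x_j}x^A \in I$ with $\max$ index at most $\max(r, \ldots) $, strong stability of $I$ plus the fact that $x_k$ times an element of $I$ lies in $I \subseteq J$ finishes it. I would organize this by cases on whether $i \ge r$ or $i < r$ and whether the divided variable is $x_k$ or a variable of $x^A$, pushing everything back into $I$.

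Next I would verify that $J$ is saturated, i.e. $\sat_{x_n}(J) = J$; equivalently, no minimal generator of $J$ is divisible by $x_n$. The generators coming from $G(I) \setminus \{x^A\}$ are not divisible by $x_n$ since $I$ is saturated; the new generators $x^A x_k$ have $k \le n-1$ and $x^A$ is not divisible by $x_n$ (as $x^A \in G(I)$ and $I$ is saturated), so they are not divisible by $x_n$ either. One should also confirm $J \ne R$: the new generators all have positive degree since $x^A \ne 1$, and if $G(I)$ contained $1$ then $I = R$, excluded. I expect the main obstacle to be the bookkeeping in the strong-stability check: one must be careful that after a single basic move on a new generator $x^A x_k$ the result is genuinely \emph{in the ideal $J$} --- it may not itself be a generator, but lies in $J$ because it is either (i) a multiple of some old generator of $I$ other than $x^A$, or (ii) a multiple of one of the new generators $x^A x_{k'}$, and the hypothesis $G(I) \cap \Right(x^A) = \varnothing$ is exactly what guarantees that the only generator of $I$ lost in passing to $J$ is $x^A$ and nothing in $\Right(x^A)$ interferes. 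I would make that appeal to expandability explicit at the one point it is needed.
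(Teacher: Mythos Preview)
Your proposal has two genuine gaps.

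First, the reduction of the contraction case to the expansion case via Lemma \ref{expan-contr-inverse-ops} is circular. Knowing that expanding $x^A$ in $I^{\C}$ returns $I$ tells you nothing about whether $I^{\C}$ is strongly stable: the expansion result you proved assumes its input is already a saturated strongly stable ideal, which is precisely what you are trying to establish for $I^{\C}$. You seem to sense this and hedge, but no clean reduction is available here; the two cases must be handled separately. The paper does exactly that, and the contraction case is in fact the easier one: since $I \subset I^{\C}$, the only new minimal generator to check is $x^A$ itself, and the hypothesis $\Left(x^A) \subset I \subset I^{\C}$ together with strong stability of $I$ immediately gives $\frac{x_i}{x_j} x^A \in I^{\C}$ for all $i<j$ with $x_j \mid x^A$.

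Second, and more seriously, in the expansion case you repeatedly invoke the containment $I \subseteq J = I^{\E}$, but this is false: the containment goes the other way, $I^{\E} \subsetneq I$ (the monomial $x^A$ lies in $I$ but not in $I^{\E}$). This breaks your argument for the old generators $u \in G(I) \setminus \{x^A\}$: you know $\frac{x_i}{x_j} u \in I$ by strong stability of $I$, but you cannot conclude it lies in $I^{\E}$ without further work. The danger is precisely that $\frac{x_i}{x_j} u = x^A$, i.e.\ that $u$ is of the form $\frac{x_j}{x_i} x^A$ with $j>i$. The paper's argument pinpoints this: such a $u$ would force $\frac{x_{i+1}}{x_i} x^A \in I$ (by strong stability, since $i+1 \le j$), but $\frac{x_{i+1}}{x_i} x^A \in \Right(x^A)$, and one checks that $\Right(x^A) \cap I = \varnothing$ (using both expandability and minimality of $x^A$ as a generator). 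This is exactly where the expandability hypothesis $G(I) \cap \Right(x^A) = \varnothing$ enters, and it is the step your case analysis never reaches. Your treatment of the genuinely new generators $x^A x_k$ is closer to correct in spirit, and your saturation check is fine.
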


\begin{proof}
Note that if $I$ is saturated, then $I^{\C}$ or $I^{\E}$ will by 
definition also be saturated.

Suppose that $x^A$ is contractible. By Remark \ref{double-sat}(i), we 
need only show that $(x_i/x_j) \cdot x^A \in I^{\C}$ for all $j$ such 
that $x_j|x^A$ and all $i<j$. Since $x^A$ is contractible, $\Left(x^A)
\subset I$. Thus, for all $j$ such that $x_j|x^A$, each monomial 
$(x_{j-1}/x_j) \cdot x^A \in I$ so the monomial is also in $I^{\C}$. 
Because $I$ is strongly stable, if $(x_{j-1}/x_j) \cdot x^A \in I$, 
then $(x_i/x_j) \cdot x^A \in I$ for all $i<j$, so $(x_i/x_j) \cdot 
x^A \in I^{\C}$ for all $i<j$.

Suppose $x^A$ is expandable. Now, we need to establish that we have a 
strongly stable ideal after removing the monomial $x^A$ from $G(I)$. 
Consider a monomial $x^B$ of the form $(x_k/x_j) \cdot x^A$ for some 
$j$ such   that $x_j|x^A$ and $k>j$. Then the monomial $x^B$ is not in 
$I$, because $(x_{j+1}/x_j) \in \Right(x^A)$, $\Right(x^A)$ is disjoint 
from $I$, and $I$ is strongly stable. Thus, the monomial $x^A$ can be 
removed from $G(I)$ without destroying strong stability.
\end{proof}

In any saturated strongly stable ideal, there will always be 
expandable monomials. If the ideal is not doubly saturated, there will 
be contractible monomials. The particular expansions and contractions 
described in the following result form the basis for Section 
\ref{almost-lex-section}.

\begin{lem} \label{exist-some-expan-contr}
In any fixed degree, the minimal monomial generator of $I$, which is 
smallest according to the lexicographic order, will be expandable.

If the ideal $I$ is not doubly saturated, then some minimal monomial 
generators will contain the variable $x_{n-1}$. In any fixed degree $d$, 
among the monomials $x^A$ of degree $d - 1$ such that $x^A x_{n-1}$ is 
a minimal monomial generator of the ideal, the monomial, which is 
largest according to the lexicographic order, will be contractible.
\end{lem}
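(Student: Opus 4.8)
The plan is to verify the two defining conditions for expandability and contractibility directly from Definition \ref{expan-contr}, using the strong stability of $I$ to control where right-shifts and left-shifts land.

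First, for the expandability claim, let $x^A$ be the lexicographically smallest minimal generator of $I$ in degree $d$, with $r = \max(x^A)$. By Definition \ref{expan-contr}(i), I must show $G(I) \cap \Right(x^A) = \varnothing$. Every monomial in $\Right(x^A)$ has the form $x^A \cdot x_{i+1}/x_i$ with $x_i \mid x^A$ and $i < n-1$; by Remark \ref{rem-left-right-shift}(i) each such right-shift is strictly smaller than $x^A$ in the lexicographic order and has the same degree $d$. So it suffices to argue that no monomial of degree $d$ that is lex-smaller than $x^A$ can lie in $G(I)$. This is exactly the minimality of $x^A$ as the lex-smallest degree-$d$ element of $G(I)$: if some $x^B \in \Right(x^A)$ were in $G(I)$, then $x^B$ would be a degree-$d$ minimal generator with $x^B <_{lex} x^A$, contradicting the choice of $x^A$. (One should note the edge case $r = n-1$ or $r = n$: if $\max(x^A) \in \{n-1, n\}$ one still has to check $\Right(x^A)$ is handled — but any right-shift $x^A x_{i+1}/x_i$ with $i < n-1$ still satisfies $i+1 \le n-1$, so the shifted variable index stays in range, and the lex-comparison argument is unchanged; if $x^A = x_0^d$ there are no right-shifts at all and the claim is trivial.)

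Second, for the contractibility claim, assume $I$ is not doubly saturated. Since the double saturation of $I$ is obtained by setting $x_{n-1} = x_n = 1$, if $I$ were not to involve $x_{n-1}$ in any minimal generator then $I = \sat_{x_{n-1},x_n}(I)$, contradicting that $I$ is not doubly saturated; hence some minimal generator is divisible by $x_{n-1}$. Fix a degree $d$ and let $\Set = \{x^A : \deg(x^A) = d-1,\ x^A x_{n-1} \in G(I)\}$, and let $x^A$ be the lex-largest element of $\Set$. By Definition \ref{expan-contr}(ii) I must check (a) $x^A \notin I$ — but $x^A x_{n-1} \in G(I)$ forces $x^A \notin I$, since otherwise $x^A x_{n-1}$ would be a multiple of the element $x^A \in I$ and could not be a minimal generator — and (b) $\Left(x^A) \subset I$. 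For (b), take any left-shift $x^B = x^A \cdot x_{i-1}/x_i \in \Left(x^A)$; by Remark \ref{rem-left-right-shift}(i), $x^B >_{lex} x^A$ and $\deg x^B = d-1$. Then $x^B x_{n-1}$ has degree $d$ and $x^B x_{n-1} >_{lex} x^A x_{n-1}$. Since $I$ is strongly stable and $x^A x_{n-1} \in I$ with $x_i \mid x^A x_{n-1}$ (note $i \le n-1$ so $i-1 < i$ is a legitimate shift), we get $x^B x_{n-1} = (x_{i-1}/x_i) \cdot (x^A x_{n-1}) \in I$; thus $x^B x_{n-1}$ is a degree-$d$ monomial of $I$ divisible by $x_{n-1}$. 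It remains to deduce $x^B \in I$. Here I would argue that $x^B \in \sat_{x_n}(I) = I$ (using that $I$ is saturated, Remark \ref{double-sat}(ii)): indeed the monomial $x^B x_{n-1}$ lies in $I$, and strong stability applied to the variable $x_{n-1}$ and a sufficiently high power of $x_n$ — more precisely, one should instead observe that $x^B$ is obtained from the lex-largest choice, so if $x^B \notin I$ then $x^B$ would lie in $\Set$ (as $x^B x_{n-1} \in I$ and, being divisible only by variables among $x_0,\dots,x_{n-1}$, it would be a minimal generator once we check no proper divisor is in $I$) with $x^B >_{lex} x^A$, contradicting maximality. To make this last deduction airtight, I would show $x^B x_{n-1} \in G(I)$: any proper monomial divisor of $x^B x_{n-1}$ of degree $< d$ that lies in $I$ would, after multiplying back, show $x^A x_{n-1}$ is non-minimal via a lex/stability comparison, contradicting $x^A x_{n-1} \in G(I)$.

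The main obstacle is the final step of part (b): passing from "$x^B x_{n-1} \in I$" to "$\Left(x^A) \subset I$", i.e., to $x^B \in I$. The clean way is the maximality argument — if $x^B \notin I$ then $x^B x_{n-1}$ is a minimal generator (one must rule out that some smaller-degree divisor of $x^B x_{n-1}$ is already in $I$, which follows because $I$ is strongly stable and $x^A x_{n-1} \in G(I)$ is minimal), placing $x^B$ in $\Set$ and contradicting that $x^A$ is lex-largest in $\Set$. I would present part (b) in this form, isolating the minimal-generator verification as the one spot needing a careful (but routine) divisibility-and-stability check.
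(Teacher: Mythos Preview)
Your argument for expandability is correct and matches the paper's intent: right-shifts of $x^A$ are lex-smaller of the same degree, so none can lie in $G(I)$ by the minimality of $x^A$.

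For contractibility, your overall strategy is right, but the final step has a genuine gap. You correctly observe that $x^B x_{n-1}\in I$ by strong stability, and that if $x^B x_{n-1}\in G(I)$ you contradict the lex-maximality of $x^A$. The problem is the remaining case: you assert that if some proper divisor $m\in G(I)$ of $x^B x_{n-1}$ has $\deg m<d$, then ``after multiplying back'' one would see that $x^A x_{n-1}$ is non-minimal ``via a lex/stability comparison.'' This does not work as stated. Passing from $x^B x_{n-1}$ back to $x^A x_{n-1}$ means replacing an $x_{j-1}$ by an $x_j$ with $j>j-1$, and strong stability only moves variables to \emph{smaller} indices; so there is no reason $m$, or any shift of it that you can produce inside $I$, must divide $x^A x_{n-1}$.

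The fix is not to aim at $x^A x_{n-1}$ at all in this case, but to show $x^B\in I$ directly. Since $m\nmid x^B$ but $m\mid x^B x_{n-1}$, the $x_{n-1}$-exponent of $m$ equals that of $x^B x_{n-1}$; hence the cofactor $x^B x_{n-1}/m$ contains no $x_{n-1}$ (and no $x_n$, as $I$ is saturated). Pick any variable $x_k$ dividing this cofactor, so $k\le n-2$. Then strong stability (applied to $m$, replacing its $x_{n-1}$ by $x_k$) gives $(x_k/x_{n-1})\,m\in I$, and one checks immediately that $(x_k/x_{n-1})\,m\mid x^B$. Thus $x^B\in I$, contradicting the assumption and completing the proof that $\Left(x^A)\subset I$. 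With this correction, your write-up is a valid expansion of the paper's one-line justification.
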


\begin{proof}
These observations follow directly from Definition \ref{expan-contr} 
and Remark \ref{rem-left-right-shift}(i).
\end{proof}

Our aim is to use expansions to produce saturated strongly stable 
ideals from simpler ideals--ideals with fewer minimal generators or 
minimal generators of smaller degree.  We start with the following 
result, which appears as Lemma 23 in \cite{AR}. We follow Reeves's 
argument with some suitable modifications.

\begin{lem} \label{contr-to-double-sat}
There is a finite sequence of contractions taking the ideal $I$ to 
its double saturation $\sat_{x_{n-1},x_n}(I)$.
\end{lem}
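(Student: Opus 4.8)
The plan is to show that, as long as $I$ is not already doubly saturated, we can always perform a contraction, and that after each contraction the ideal becomes ``smaller'' with respect to a carefully chosen measure, so the process terminates at the double saturation. First, I would observe that the double saturation $\sat_{x_{n-1},x_n}(I)$ is obtained from $I$ precisely by setting $x_{n-1}=x_n=1$ in every generator (Remark \ref{double-sat}(iii)), so $I$ equals its double saturation if and only if no minimal generator of $I$ is divisible by $x_{n-1}$. Thus, assuming $I \neq \sat_{x_{n-1},x_n}(I)$, some minimal generator of $I$ is divisible by $x_{n-1}$; pick one of maximal degree $d$ among such generators, and among degree-$(d-1)$ monomials $x^A$ with $x^A x_{n-1} \in G(I)$ take the lexicographically largest one. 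By Lemma \ref{exist-some-expan-contr} this $x^A$ is contractible, so the contraction $I^{\C}$ is defined and, by Lemma \ref{contr-expan-stable-ideals}, is again a saturated strongly stable ideal.

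Next I would set up a termination measure. A natural choice is the pair $(\text{top degree of a generator divisible by } x_{n-1},\ \text{number of generators of that top degree divisible by } x_{n-1})$, ordered lexicographically, or more robustly a sum such as $\mu(I) := \sum_{x^B \in G(I)} \deg_{x_{n-1}}(x^B) \cdot \text{(something)}$ — but the cleanest argument is probably just to track $\sum_{x^B \in G(I),\ x_{n-1}\mid x^B} \deg(x^B)$, or even simpler, to argue directly that finitely many contractions suffice by comparing $I$ to its double saturation. Concretely: the contraction $I^{\C}$ replaces the generators $x^A x_r, \dots, x^A x_{n-1}$ by the single generator $x^A$ (where $r=\max(x^A)$). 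I need to check that this strictly decreases my measure, i.e. that the newly introduced generator $x^A$ and any new generators created when passing to a minimal generating set of $I^{\C}$ contribute strictly less $x_{n-1}$-weight than what was removed. Since $x^A$ has degree $d-1 < d$, and the removed generator $x^A x_{n-1}$ had degree $d$, the top degree among $x_{n-1}$-divisible generators can only go down or, if it stays at $d$, the count at degree $d$ strictly decreases (we removed one such generator and $x^A$ is not divisible by $x_{n-1}$ since $x^A x_{n-1}$ was a \emph{minimal} generator). One subtlety: contracting may turn some former non-generators into minimal generators of $I^{\C}$, but these all divide $x^A \cdot (\text{something of degree} \le 1)$ hence have degree $\le d-1$; I must confirm none of them reintroduces $x_{n-1}$ at a bad spot — and indeed $x^A$ itself is not divisible by $x_{n-1}$, so only the flanking generators $x^A x_r,\dots,x^A x_{n-2}$ among the removed ones were possibly $x_{n-1}$-free while $x^A x_{n-1}$ was the sole $x_{n-1}$-divisible one removed at degree $d$.

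The main obstacle, and the step I would spend the most care on, is exactly this bookkeeping about what happens to the \emph{minimal} generating set under contraction: verifying that the contraction genuinely reduces the chosen complexity measure and that no contraction step ever increases the degree or the stock of $x_{n-1}$-divisible generators at the current top degree. Once that monotonicity is established, the sequence of contractions must terminate (the measure is a non-negative integer strictly decreasing), and it can only terminate when no generator is divisible by $x_{n-1}$, i.e. at $\sat_{x_{n-1},x_n}(I)$ — giving a finite sequence as claimed. I would then remark that this is Reeves's Lemma 23 in \cite{AR} with the modified definition of contraction from Remark \ref{rem-contr}, and that the modification (requiring only $\Left(x^A)\subset I$ rather than $\Left(x^A)\subset G(I)$) is precisely what guarantees the contractible monomial produced at each stage by Lemma \ref{exist-some-expan-contr} is legitimately contractible, so the argument goes through verbatim in our setting.
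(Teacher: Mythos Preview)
Your approach is correct in outline and close to the paper's, but it differs in one choice, contains one small slip, and leaves one step unjustified.

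The paper selects the $x_{n-1}$-divisible minimal generator of \emph{least} degree (and, among those, the lex-largest), whereas you take maximal degree. By Lemma~\ref{exist-some-expan-contr} either choice yields a contractible monomial, so both suffice for the lemma; the paper's choice is deliberate because it produces the explicit contraction sequence of Remark~\ref{rem-contr-scheme}, which is then reversed in Corollary~\ref{expan-from-double-sat}. The paper's termination is also briefer than your measure-based argument: it simply observes that each contraction preserves the double saturation and replaces $x^A x_{n-1}$ (and possibly other generators) by the lower-degree monomial $x^A$.

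Your parenthetical ``$x^A$ is not divisible by $x_{n-1}$ since $x^A x_{n-1}$ was a minimal generator'' is a non sequitur---minimality gives only $x^A\notin I$, not $x_{n-1}\nmid x^A$---but this does not break your argument, since all you actually use is $\deg(x^A)=d-1<d$. Your concern about former non-generators becoming minimal is also unnecessary: because $I^{\C}=I+(x^A)\supsetneq I$, every minimal generator of $I^{\C}$ is either $x^A$ or already in $G(I)$, so $G(I^{\C})\subseteq G(I)\cup\{x^A\}$ and no surprises occur. Finally, your ``i.e.'' at the end hides a genuine gap: a terminal ideal with no $x_{n-1}$-divisible generator is doubly saturated, but you must still check it equals $\sat_{x_{n-1},x_n}(I)$ rather than some other doubly saturated ideal. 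The paper handles this by noting that setting $x_{n-1}=1$ sends $x^A x_{n-1}$ and $x^A$ to the same monomial, so each contraction preserves the double saturation; hence the terminal ideal, being its own double saturation, coincides with that of $I$.
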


\begin{proof}
Since $I$ is saturated, no minimal generators are divisible  by $x_n$. 
Consider the set $M$ of monomials in $G(I)$ that are divisible by 
$x_{n-1}$. If $M=\varnothing$, then $I$ is doubly saturated. Otherwise, 
choose the monomial $x^A \cdot x_{n-1}$ of least  degree in $M$, which 
is largest with respect to the lexicographic order $>_{lex}$. As noted 
in Lemma \ref{exist-some-expan-contr}, 
$x^{A}$ is contractible in $I$.

Let $I^{\C}$ be the contraction of $x^A$ in $I$. Note that contracting 
$x^{A}$ replaces $x^{A} \cdot x_{n-1}$ (and possibly other monomials) 
by $x^{A}$. Thus, $I^{\C}$ has the same double saturation as $I$. After 
repeating the above step some finite number of times, we get an ideal 
whose minimal generators are not divisible by $x_{n-1}$. This is the 
double saturation of $I$.
\end{proof}

\begin{ex} \label{ex-contr}
We illustrate the last proof with the ideal $I=(x_0, x_1^2, x_1x_2^3)$ 
in the ring $K[x_0,x_1,x_2,x_3]$.
\begin{itemize}
\item First we contract the monomial $x_1x_2^2$ in $I$. (Note that 
$\Left(x_1x_2^2)=\{x_0 x_2^2, x_1^2 x_2\}$ is {\em not} a subset of 
the set of minimal generators of $I$. This shows that our modification 
of Reeves's definition of contraction in \cite{AR} is needed in the 
above argument.) The resulting ideal $I_1$ is generated minimally by
$$G(I_1)=G(I)\cup\{x_1x_2^2\}\backslash\{x_1x_2^3\}=
\{x_0,x_1^2,x_1x_2^2\}.$$
\item Next, we contract $x_1x_2^2$ and  get the ideal
$$ I_2 = (x_0, x_1^2, x_1x_2). $$
\item In the last step, contracting $x_1$ in $I_2$ gives the double 
saturation
$$ I_3 = (x_0,x_1) = \sat_{x_2,x_3}(I). $$
\end{itemize}
\end{ex}

We now make the contractions necessary to get to the double saturation 
more explicit.

\begin{rem} \label{rem-contr-scheme}
Assume that the ideal $I$ is different from its double saturation. 
List the minimal generators of $I$ that are divisible by $x_{n-1}$,
$$ x^{A_1} x_{n-1}^{e_1},\; x^{A_2} x_{n-1}^{e_2},\; \ldots,\; 
x^{A_s} x_{n-1}^{e_s}, $$
where $x^{A_i}$ is not divisible by $x_{n-1}$, so that $\deg x^{A_i} 
x_{n-1}^{e_i} \leq \deg x^{A_{i+1}} x_{n-1}^{e_{i+1}}$, and in case 
of equality $x^{A_i} x_{n-1}^{e_i} >_{lex} x^{A_{i+1}} 
x_{n-1}^{e_{i+1}}$. Then the contractions in the algorithm given in 
the proof of Lemma \ref{contr-to-double-sat} use the following 
monomials
$$x^{A_1} x_{n-1}^{e_1 - 1},\; x^{A_1} x_{n-1}^{e_1 - 2}, \ldots, 
x^{A_1},\; x^{A_2} x_{n-1}^{e_2-1},\; \ldots,\; x^{A_2}, \ldots, 
x^{A_s}$$
in the stated order. Thus, we need $e_1+e_2+\ldots+e_s$ contractions 
to compute the double saturation of $I$.
\end{rem}

Since this process is reversible, we can recover an ideal from its 
double saturation:

\begin{cor} \label{expan-from-double-sat}
There is a finite sequence of expansions taking the double 
saturation of an ideal $\sat_{x_{n-1},x_n}(I)$ to the ideal $I$. In 
particular, the necessary number of expansions can be determined by 
adding up the exponents of $x_{n-1}$ in the minimal generators of $I$.
\end{cor}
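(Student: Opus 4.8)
The plan is to run the contraction sequence of Lemma~\ref{contr-to-double-sat} backwards, using the fact that expansion is the inverse of contraction. Lemma~\ref{contr-to-double-sat}, made explicit in Remark~\ref{rem-contr-scheme}, produces a finite chain of saturated strongly stable ideals
$$ I = J_0 \supsetneq J_1 \supsetneq \cdots \supsetneq J_m = \sat_{x_{n-1},x_n}(I), $$
in which $J_{k+1} = (J_k)^{\C}$ is the contraction of a monomial $x^{B_k}$ in $J_k$, where $x^{B_0},\ldots,x^{B_{m-1}}$ is the list $x^{A_1}x_{n-1}^{e_1-1},\, x^{A_1}x_{n-1}^{e_1-2},\, \ldots,\, x^{A_1},\, x^{A_2}x_{n-1}^{e_2-1},\, \ldots,\, x^{A_s}$ of Remark~\ref{rem-contr-scheme}, and $m = e_1 + e_2 + \cdots + e_s$.

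First I would fix $k$ and apply Lemma~\ref{expan-contr-inverse-ops}(b): since $x^{B_k}$ is contractible in $J_k$ and $J_{k+1}$ is its contraction, the monomial $x^{B_k}$ is expandable in $J_{k+1}$ and its expansion in $J_{k+1}$ is exactly $J_k$. Reading the chain from right to left then exhibits
$$ \sat_{x_{n-1},x_n}(I) = J_m \leadsto J_{m-1} \leadsto \cdots \leadsto J_0 = I $$
as a sequence of $m$ expansions, which proves the first assertion. For the count, the second assertion is immediate: the number $m = e_1 + \cdots + e_s$ of steps recorded in Remark~\ref{rem-contr-scheme} is, by construction, the sum of the exponents of $x_{n-1}$ over the minimal generators of $I$ (generators not divisible by $x_{n-1}$ contributing $0$).

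I expect no real obstacle here, since the substance has already been isolated in Lemma~\ref{contr-to-double-sat}, Lemma~\ref{expan-contr-inverse-ops}, and Remark~\ref{rem-contr-scheme}. The only point deserving a word of care is that Lemma~\ref{expan-contr-inverse-ops}(b) must legitimately apply at each step, i.e.\ that every intermediate $J_k$ is a saturated strongly stable ideal; this is guaranteed by Lemma~\ref{contr-expan-stable-ideals}, as each $J_{k+1}$ is obtained by a contraction from the saturated strongly stable ideal $J_k$, starting from $I$ itself.
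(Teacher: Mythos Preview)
Your argument is correct and is essentially the paper's own proof, which simply reverses the contraction sequence of Remark~\ref{rem-contr-scheme} via Lemma~\ref{expan-contr-inverse-ops}. One small slip: contractions enlarge the ideal (they add the monomial $x^A$, which divides the removed generators), so your chain should read $I = J_0 \subsetneq J_1 \subsetneq \cdots \subsetneq J_m = \sat_{x_{n-1},x_n}(I)$ rather than $\supsetneq$; this does not affect the argument.
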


\begin{proof}
The sequence of contractions described in  Remark 
\ref{rem-contr-scheme}, which take $I$ to its double saturation, can 
be reversed and considered as expansions by Lemma 
\ref{expan-contr-inverse-ops}.
\end{proof}

We conclude this section by describing the change of the Hilbert 
function under contraction or expansion.

\begin{lem} \label{change-of-Hilbert-funct}
\begin{itemize}
\item[(a)] Let $I^{\E}$ be the expansion of $x^A$ in $I$. Then 
$$ h_{R/I^{\E}} (j) = \left \{ 
\begin{array}{cl} 
h_{R/I} (j) & \mif j < \deg (x^A) \\ 
h_{R/I} (j) + 1 & \mif j \geq \deg (x^A)
    \end{array} \right. . $$
\item[(b)] Let $I^{\C}$ be the contraction of $x^B$ in $I$. Then 
$$ h_{R/I^{\C}} (j) = \left \{ 
\begin{array}{cl} h_{R/I} (j) & \mif j < \deg (x^B) \\ 
h_{R/I} (j) - 1 & \mif j \geq \deg (x^B)
    \end{array} \right. . $$
\end{itemize}
\end{lem}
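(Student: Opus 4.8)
The plan is to compute the Hilbert function directly from the change in the set of minimal generators, using the Hilbert series formula \eqref{compute-Hilbert-series} from Remark \ref{Hilbert-polynomial-and-series}. Since (a) and (b) are mutually inverse by Lemma \ref{expan-contr-inverse-ops}, it suffices to prove (a), and then (b) follows by reading the statement backwards. So I would concentrate on the expansion case.

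Write $r = \max(x^A)$ and $d = \deg(x^A)$. By Definition \ref{expan-contr}(i), passing from $I$ to $I^{\E}$ removes the single generator $x^A$ of max index $r$ and degree $d$, and adds the generators $x^A x_r, x^A x_{r+1}, \ldots, x^A x_{n-1}$, which have degree $d+1$ and max indices $r, r+1, \ldots, n-1$ respectively. Now I would plug these data into \eqref{compute-Hilbert-series}: the term $(1-t)^{l_i} t^{d_i}$ contributed by $x^A$ is $(1-t)^r t^d$, and it is replaced by $\sum_{k=r}^{n-1} (1-t)^k t^{d+1}$. The key computation is the telescoping identity
\begin{equation*}
\sum_{k=r}^{n-1} (1-t)^k = \frac{(1-t)^r - (1-t)^n}{t},
\end{equation*}
so that $\sum_{k=r}^{n-1} (1-t)^k t^{d+1} = \left[(1-t)^r - (1-t)^n\right] t^d$. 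Therefore the quantity $1 - \sum_i (1-t)^{l_i} t^{d_i}$ appearing in \eqref{compute-Hilbert-series} changes by
\begin{equation*}
(1-t)^r t^d - \left[(1-t)^r - (1-t)^n\right] t^d = (1-t)^n t^d,
\end{equation*}
and hence $H_{R/I^{\E}}(t) - H_{R/I}(t) = (1-t)^n t^d \cdot (1-t)^{-n-1} = t^d (1-t)^{-1} = t^d + t^{d+1} + t^{d+2} + \cdots$. Reading off coefficients gives exactly the claimed formula: $h_{R/I^{\E}}(j) = h_{R/I}(j)$ for $j < d$ and $h_{R/I^{\E}}(j) = h_{R/I}(j) + 1$ for $j \geq d$.

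One subtlety to address is that \eqref{compute-Hilbert-series} is stated for a set of \emph{minimal} generators, while the new generators $x^A x_r, \ldots, x^A x_{n-1}$ of $I^{\E}$ together with the surviving generators of $I$ need not all be minimal a priori; however, Lemma \ref{contr-expan-stable-ideals} guarantees $I^{\E}$ is a saturated strongly stable ideal, and since the formula \eqref{compute-Hilbert-series} is additive over any generating set that forms the basis of the Eliahou--Kervaire complex, or alternatively one can simply note that dropping any non-minimal generator does not change the ideal and the Hilbert series is intrinsic, this causes no problem. I expect the only real obstacle to be bookkeeping the telescoping sum correctly and making sure the max-index data of the new generators is recorded properly; everything else is a direct substitution into the already-established series formula. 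For part (b), since contraction of $x^B$ in $I$ is by Lemma \ref{expan-contr-inverse-ops}(b) the inverse of expanding $x^B$ in $I^{\C}$, applying part (a) with $x^A = x^B$ and with $I$ replaced by $I^{\C}$ yields $h_{R/I}(j) = h_{R/I^{\C}}(j) + 1$ for $j \geq \deg(x^B)$ and equality below, which is exactly the assertion.
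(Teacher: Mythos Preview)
Your computation is correct, and the telescoping identity does the job. However, the paper's proof is far more direct: it simply observes that $I^{\E} \subset I$ and that for $j \geq \deg(x^A)$ the monomial $x^A \cdot x_n^{\,j-\deg(x^A)}$ is the \emph{unique} monomial in $[I]_j \setminus [I^{\E}]_j$ (and similarly for part (b), with $x^B \cdot x_n^{\,j-\deg(x^B)}$ the unique monomial in $[I^{\C}]_j \setminus [I]_j$). This is a two-line counting argument that avoids the Hilbert series formula entirely and makes the result transparent: expansion removes exactly one monomial from $I$ in each degree $\geq d$, namely the $x_n$-multiple of $x^A$.

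Your route through \eqref{compute-Hilbert-series} works, but it is more machinery than the statement warrants, and it forces you to confront the minimality of the displayed generating set $G(I^{\E})$---a point you hedge on without fully resolving. (Your remark that ``the Hilbert series is intrinsic'' does not help: if the set were non-minimal, formula \eqref{compute-Hilbert-series} simply would not apply to it, so you would still need to pass to the minimal set and redo the bookkeeping.) In fact $G(I^{\E})$ \emph{is} minimal---this follows from strong stability and the expandability hypothesis $\Right(x^A)\cap G(I)=\varnothing$---but verifying that is already more work than the paper's entire proof. The lesson: when the Hilbert function changes by exactly $1$ in each degree, look for the single monomial responsible before reaching for generating-function identities.
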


\begin{proof}
(a) We have $I^{\E} \subset I$. Furthermore, if $j \geq \deg (x^A)$, 
then $x^A \cdot x_n^{j - \deg (x^A)}$ is the only monomial in $[I]_j 
\setminus [I^{\E}]_j$. The claim follows.

(b) Now, $I \subset I^{\C}$, and $x^B \cdot x_n^{j - \deg (x^B)}$ is 
the only monomial in $[I^{\C}]_j \setminus [I]_j$, provided $j \geq 
\deg (x^B)$.
\end{proof}

We can now determine the number of expansions to recover an ideal 
from its double saturation in a more abstract manner.

\begin{cor} \label{number-expan}
The number of expansions needed to take $J = \sat_{x_{n-1},x_n}(I)$ 
to $I$ is $$ p_{R/I} - p_{R/J}. $$
\end{cor}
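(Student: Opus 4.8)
The plan is to combine two facts already established in the excerpt: Corollary~\ref{expan-from-double-sat}, which says that a finite sequence of expansions takes $J = \sat_{x_{n-1},x_n}(I)$ to $I$, and Lemma~\ref{change-of-Hilbert-funct}(a), which records how the Hilbert function changes under a single expansion. Let $N$ denote the number of expansions in the sequence from Corollary~\ref{expan-from-double-sat}.

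First I would observe that each expansion strictly increases the Hilbert function: by Lemma~\ref{change-of-Hilbert-funct}(a), an expansion of a monomial $x^A$ increases $h_{R/(\cdot)}(j)$ by exactly $1$ for every $j \geq \deg(x^A)$ and leaves it unchanged for $j < \deg(x^A)$. Hence for $j \gg 0$ (large enough to be in the range where all Hilbert functions involved agree with their Hilbert polynomials), each of the $N$ expansions increases the value of the eventual Hilbert polynomial by exactly $1$. Summing over the whole sequence of $N$ expansions gives $p_{R/I}(j) = p_{R/J}(j) + N$ for all $j \gg 0$, and since two polynomials agreeing for infinitely many integers are equal, $p_{R/I} - p_{R/J} = N$, the constant polynomial $N$. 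That is exactly the claimed count.

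I should also note that $J$ really does have a well-defined Hilbert polynomial of the same degree as that of $R/I$: passing to the double saturation only changes the ideal in ways that affect finitely many graded pieces at a time (each contraction alters $h$ by $-1$ from some degree on), so $R/J$ is still a quotient of $R$ with a genuine Hilbert polynomial, and the difference $p_{R/I} - p_{R/J}$ makes sense as a (constant) polynomial. This is implicit in the preceding discussion but worth stating cleanly.

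The only mild subtlety — and the one place to be careful rather than an actual obstacle — is to make the telescoping rigorous: the monomials being expanded along the way have various degrees, so for small $j$ the intermediate Hilbert functions behave irregularly, but this is irrelevant because we only need the identity in the eventual (polynomial) range, where every expansion contributes $+1$ regardless of the degree of the monomial expanded. So the argument reduces to the trivial observation that a sum of $N$ copies of $+1$ is $N$. I expect no real difficulty here; the corollary is essentially a restatement of Corollary~\ref{expan-from-double-sat} in terms of Hilbert polynomials rather than exponents of $x_{n-1}$.

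\begin{proof}
By Corollary \ref{expan-from-double-sat}, there is a finite sequence of expansions
$$
J = J_0 \subsetneqq J_1 \subsetneqq \cdots \subsetneqq J_N = I,
$$
where $J_k$ is obtained from $J_{k-1}$ by expanding some minimal generator $x^{A_k}$. By Lemma \ref{change-of-Hilbert-funct}(a), for each $k$ and each integer $j$,
$$
h_{R/J_k}(j) - h_{R/J_{k-1}}(j) = \begin{cases} 0 & \text{if } j < \deg(x^{A_k}), \\ 1 & \text{if } j \geq \deg(x^{A_k}). \end{cases}
$$
In particular, for $j \gg 0$ we have $h_{R/J_k}(j) - h_{R/J_{k-1}}(j) = 1$ for every $k$. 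Summing over $k = 1, \ldots, N$ and using that $h_{R/J_k}(j)$ agrees with $p_{R/J_k}(j)$ for $j \gg 0$, we obtain
$$
p_{R/I}(j) - p_{R/J}(j) = N \quad \text{for all } j \gg 0.
$$
Since a polynomial is determined by its values at infinitely many integers, $p_{R/I} - p_{R/J}$ is the constant polynomial $N$. Hence the number of expansions needed to take $J$ to $I$ equals $p_{R/I} - p_{R/J}$.
\end{proof}
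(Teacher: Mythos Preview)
Your argument is correct and is exactly what the paper intends: the corollary is stated without proof precisely because it follows immediately from Corollary~\ref{expan-from-double-sat} and Lemma~\ref{change-of-Hilbert-funct}(a), just as you argue. One small slip: since an expansion \emph{shrinks} the ideal (see the proof of Lemma~\ref{change-of-Hilbert-funct}(a), where $I^{\E}\subset I$), your chain should read $J=J_0\supsetneq J_1\supsetneq\cdots\supsetneq J_N=I$; this does not affect the Hilbert-function computation, which you carry out correctly.
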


\section{Strongly stable ideals with a given Hilbert polynomial}

In this section, we describe how to produce {\em all} saturated 
strongly stable ideals with a given Hilbert polynomial. We develop 
a few more tools, which culminate in Theorem \ref{link-all-ideals} 
and Algorithm \ref{alg-stable-Hilbpoly}. We start with the simplest 
case, ideals with constant Hilbert polynomial:

\begin{lem} \label{constant-Hilbert-polynomial}
Let $I \subset R$ be a saturated strongly stable ideal with 
constant Hilbert polynomial, say $p_{R/I} = c$. Then 
$\sat_{x_{n-1},x_n}(I) = (1) = R$. Moreover, any saturated strongly 
stable ideal $J \subset R$ with $p_{R/J} = c$ can be obtained from 
the ideal $(1)$ using $c$ suitable expansions.
\end{lem}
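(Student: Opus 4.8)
The plan is to prove the two assertions separately. For the first, that $\sat_{x_{n-1},x_n}(I) = R$ whenever $p_{R/I}$ is constant: the double saturation $J := \sat_{x_{n-1},x_n}(I)$ is a saturated strongly stable ideal in $R$ whose quotient is supported in dimension $\le 0$ after removing two variables, so by Remark \ref{double-sat}(iii) it is the extension of the saturation of an ideal in $R^{(2)} = K[x_0,\dots,x_{n-2}]$ whose quotient has Hilbert polynomial equal to the constant $p_{R/I}$ minus the contribution that vanishes $-$ more carefully, by Corollary \ref{number-expan} the number of expansions from $J$ to $I$ is $p_{R/I} - p_{R/J}$, a nonnegative integer, so $p_{R/J} \le c$ is also a constant. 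But $J$ is doubly saturated, meaning no minimal generator is divisible by $x_{n-1}$ or $x_n$; hence $J$ is the extension to $R$ of an ideal $J' \subset R^{(2)}$, and $J'$ must be $\fm$-primary or the unit ideal in $R^{(2)}$ for $R^{(2)}/J'$ to have constant (hence eventually zero-or-constant) Hilbert polynomial. Since $J'$ is saturated in $R^{(2)}$, the only saturated $\fm$-primary ideal is the unit ideal, so $J' = R^{(2)}$ and therefore $J = R$.

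For the second assertion, let $J \subset R$ be any saturated strongly stable ideal with $p_{R/J} = c$. By the first part, $\sat_{x_{n-1},x_n}(J) = R$, so applying Corollary \ref{expan-from-double-sat} there is a finite sequence of expansions taking $R$ (equivalently, the double saturation of $J$) to $J$. By Corollary \ref{number-expan} the number of expansions in this sequence is exactly $p_{R/J} - p_{R/R} = c - 0 = c$, since the Hilbert polynomial of $R/R = 0$ is the zero polynomial. This is the claimed statement, once we note the first expansion applied to $R$ uses the convention $I^{\E} := (x_0,\dots,x_{n-1})$ from Definition \ref{expan-contr}(i), which is consistent with the expansion bookkeeping in Lemma \ref{change-of-Hilbert-funct}(a) applied with $x^A = 1$ of degree $0$.

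The main obstacle I anticipate is making the first part fully rigorous: one has to argue that a doubly saturated strongly stable ideal with constant Hilbert polynomial must literally be $R$, not merely have the right Hilbert polynomial. The cleanest route is probably not through $R^{(2)}$ but directly: if $J \ne R$ is saturated strongly stable and doubly saturated, then no minimal generator involves $x_{n-1}$ or $x_n$, so strong stability forces some power $x_0^k \in J$ with $k$ minimal; then for each $j$ with $0 \le j \le n-2$ the ideal contains all monomials $x_0^{k} $ shifted, and in particular $\dim_K [R/J]_m$ grows at least like a polynomial of degree $\ge 1$ in $m$ (the variables $x_{n-1}, x_n$ contribute a free polynomial subring to $R/J$ once $x_0^k$ is the only obstruction in low variables), contradicting that $p_{R/J}$ is constant. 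I would phrase this via Equation \eqref{compute-Hilbert-polynomial}: if $J \ne R$ is doubly saturated then every minimal generator $x^{A_i}$ has $l_i = \max(x^{A_i}) \le n-2$, so $\binom{z+n-d_i-l_i}{n-l_i}$ has degree $n - l_i \ge 2$ in $z$, and since these binomials cannot all cancel in $p_{R/J}(z) = \binom{z+n}{n} - \sum_i \binom{z+n-d_i-l_i}{n-l_i}$ to leave a constant (the leading $\binom{z+n}{n}$ has degree $n \ge 2$, and the subtracted terms have degree $\le n-2 < n$), we get a contradiction. So $J = R$, and the rest follows as above.
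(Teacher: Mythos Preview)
Your second paragraph (the $c$ expansions from $(1)=R$) is correct and matches the paper's proof exactly, invoking Corollaries \ref{expan-from-double-sat} and \ref{number-expan}.

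The first part, however, contains a concrete error. In your Equation \eqref{compute-Hilbert-polynomial} argument you claim that for a doubly saturated $J\ne R$ the subtracted terms $\binom{z+n-d_i-l_i}{n-l_i}$ have degree $\le n-2$ in $z$. But $l_i \le n-2$ gives degree $n - l_i \ge 2$, not $\le n-2$; you have reversed the inequality. In particular a generator $x_0^k$ (so $l_i = 0$) contributes a term of degree exactly $n$, and it \emph{does} cancel the leading term of $\binom{z+n}{n}$. So the ``leading term survives'' contradiction fails as written. Your first-paragraph route through $R^{(2)}$ is also muddled: you assert that $R^{(2)}/J'$ has constant Hilbert polynomial (it is $R/J$ that does) and that $J'$ is saturated in $R^{(2)}$, neither of which is justified.

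The paper's argument is shorter and avoids the double saturation entirely: it works directly on $I$. If no power of $x_{n-1}$ lies in $I$, then by strong stability none of the $j+1$ monomials $x_{n-1}^{j-i} x_n^{i}$ ($0 \le i \le j$) lies in $I$, so $h_{R/I}(j) \ge j+1$ for every $j$, contradicting $p_{R/I}=c$. Hence some $x_{n-1}^k \in I$, and setting $x_{n-1}=x_n=1$ gives $\sat_{x_{n-1},x_n}(I)=R$ at once via Remark \ref{double-sat}(iii). This is precisely the monomial-counting idea you sketch vaguely in your third paragraph, but applied to $I$ rather than to $J$, which spares you the detour of first arguing that $p_{R/J}$ is constant. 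Your approach can be repaired along the same lines (if $J\ne R$ is doubly saturated then $1\notin J$, hence $x_{n-1}^{j-i}x_n^i\notin J$ for all $i,j$, giving the same bound $h_{R/J}(j)\ge j+1$), but the paper's direct route is cleaner.
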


\begin{proof}
If $x_{n-1}^k \in I$, then $\sat_{x_{n-1},x_n}(I) = (1) = R$ by 
Remark \ref{double-sat}(iii). Assume that no power of $x_{n-1}$ is 
in $I$. Let $j$ be any positive integer. Since $I$ is strongly stable, 
no monomial of the form $x_{n-1}^{j-i} \cdot x_n^{i} \in I$ for 
$0 \leq i \leq j$. Hence, there are at least $j+1$ monomials not 
contained in $[I]_j$ for every $j>0$, which contradicts 
$p_{R/I}(z)=c$. Thus, some power of $x_{n-1}$ is in $I$. The final 
claim is now a consequence of Corollaries 
\ref{expan-from-double-sat} and \ref{number-expan}.
\end{proof}

Recall some previously introduced notation:  $R^{(j)} := K[x_0, 
\ldots, x_{n-j}]$ is the polynomial ring where the last $j$ 
variables of $R$ have been removed. If $I \subset R$ is a saturated 
strongly stable ideal with Hilbert polynomial $p$, then the 
restriction of its double saturation $\sat_{x_{n-1},x_n}(I)$ to 
$R^{(1)} := K[x_0,\ldots,x_{n-1}]$ is a saturated strongly stable 
ideal in $R^{(1)}$ with a Hilbert polynomial that can be computed 
from $p$:

\begin{lem}\label{Delta-p}
If $I$ is a saturated strongly stable ideal with Hilbert  polynomial 
$p(z)$ and double saturation $J = \sat_{x_{n-1},x_n}(I)$, then the 
Hilbert polynomial of $J^{(1)} := J \cdot R^{(1)} \subset R^{(1)}$ is 
$p_{R^{(1)}/J^{(1)}} (z) = \Delta p(z) := p(z) - p(z-1)$.
\end{lem}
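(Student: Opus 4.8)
The statement says that passing from the double saturation $J = \sat_{x_{n-1},x_n}(I) \subset R$ to its restriction $J^{(1)} = J\cdot R^{(1)} \subset R^{(1)}$ replaces the Hilbert polynomial $p(z)$ by $\Delta p(z) = p(z) - p(z-1)$. The plan is to relate the Hilbert functions of $J$ and $J^{(1)}$ directly. Since $J$ is doubly saturated, no minimal generator of $J$ is divisible by $x_{n-1}$ or $x_n$, so $J = J^{(1)}\cdot R$, i.e.\ $J$ is the extension to $R$ of $J^{(1)}$. First I would observe that, because $J$ is obtained from $J^{(1)}$ by adjoining two fresh variables that appear in no generator, we have $R/J \cong (R^{(1)}/J^{(1)})[x_{n-1},x_n]$ as graded modules. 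Equivalently, at the level of Hilbert series, $H_{R/J}(t) = H_{R^{(1)}/J^{(1)}}(t)\cdot (1-t)^{-2}$, which follows immediately from formula \eqref{compute-Hilbert-series} in Remark \ref{Hilbert-polynomial-and-series} since the exponents $l_i,d_i$ of the minimal generators are the same whether computed in $R^{(1)}$ or in $R$ (the generators don't involve the last two variables).

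Next I would translate this into a statement about Hilbert polynomials. Multiplying a Hilbert series by $(1-t)^{-1}$ corresponds, on the level of Hilbert functions, to taking the partial-sum operator $h(j)\mapsto \sum_{k\le j} h(k)$, which on Hilbert polynomials is the inverse of the difference operator $\Delta$. So $H_{R/J}(t) = H_{R^{(1)}/J^{(1)}}(t)(1-t)^{-2}$ forces, for $j\gg 0$,
\[
p_{R/J}(j) = \sum_{0 \le k \le j}\big(j-k+1\big)\,h_{R^{(1)}/J^{(1)}}(k),
\]
and applying $\Delta$ twice to $p_{R/J}$ recovers $h_{R^{(1)}/J^{(1)}}$ for large arguments, hence $\Delta^2 p_{R/J} = \Delta p_{R^{(1)}/J^{(1)}}$ as polynomials, i.e.\ $p_{R^{(1)}/J^{(1)}}(z) = \Delta p_{R/J}(z) = \Delta p(z)$ up to a constant. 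The constant is pinned down by saturatedness: since $J^{(1)}$ is saturated, $h_{R^{(1)}/J^{(1)}}(j) = p_{R^{(1)}/J^{(1)}}(j)$ for \emph{all} $j\ge 0$ for which the polynomial is the honest dimension — more precisely, one uses that both sides agree for $j\gg 0$ and that a polynomial is determined by its values there, so $p_{R^{(1)}/J^{(1)}} = \Delta p$ exactly, with no correction term. Alternatively, and perhaps cleaner, I would avoid the constant-chasing entirely by giving the combinatorial identity directly from \eqref{compute-Hilbert-polynomial}: if $J$ has minimal generators $x^{A_i}$ with max indices $l_i$ and degrees $d_i$ (all $l_i\le n-2$), then
\[
p_{R/J}(z) - p_{R/J}(z-1) = \binom{z+n}{n} - \binom{z+n-1}{n} - \sum_i\!\left[\binom{z+n-d_i-l_i}{n-l_i} - \binom{z+n-1-d_i-l_i}{n-l_i}\right],
\]
and Pascal's rule $\binom{m}{k}-\binom{m-1}{k} = \binom{m-1}{k-1}$ turns the right-hand side into $\binom{z+n-1}{n-1} - \sum_i \binom{z+n-1-d_i-l_i}{n-1-l_i}$, which is exactly the formula \eqref{compute-Hilbert-polynomial} for $p_{R^{(1)}/J^{(1)}}$ computed in the ring $R^{(1)} = K[x_0,\dots,x_{n-1}]$ (which has $n$ variables, so ``$n$'' there is our $n-1$).

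The main obstacle is the bookkeeping: one must be careful that the generators of $J^{(1)}$ are literally the same monomials as the generators of $J$ (true because $J$ is doubly saturated, so that $x_{n-1}$ and $x_n$ divide none of them) and that these are still a \emph{minimal} generating set of $J^{(1)}$ (also clear, since minimality is unaffected by extending the ambient ring by variables not occurring in the generators). Given that, both approaches are short. I would present the Pascal's-rule computation as the proof, since it is self-contained and sidesteps any worry about whether the Hilbert function equals the Hilbert polynomial in low degrees; the identification of the right-hand side with $p_{R^{(1)}/J^{(1)}}$ via Remark \ref{Hilbert-polynomial-and-series} applied in $R^{(1)}$ is then immediate.
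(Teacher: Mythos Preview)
Your argument has a genuine gap: throughout, you compute $\Delta p_{R/J}$, not $\Delta p_{R/I}$. The lemma's $p$ is the Hilbert polynomial of $R/I$, but the double saturation $J$ is a strictly larger ideal than $I$ in general (see Example~\ref{ex-contr}), so $p_{R/J}\neq p$ in general. Your Pascal's-rule computation correctly establishes $p_{R^{(1)}/J^{(1)}} = \Delta p_{R/J}$, but nowhere do you justify the missing step $\Delta p_{R/J} = \Delta p_{R/I}$. This is true, because $p_{R/I} - p_{R/J}$ is a constant (Corollary~\ref{number-expan}), but you need to say so explicitly.

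There is also a bookkeeping slip: $R^{(1)} = K[x_0,\ldots,x_{n-1}]$ already contains $x_{n-1}$, so passing from $R^{(1)}$ to $R$ adjoins only \emph{one} variable, $x_n$. Hence $R/J \cong (R^{(1)}/J^{(1)})[x_n]$ and $H_{R/J}(t) = H_{R^{(1)}/J^{(1)}}(t)\cdot(1-t)^{-1}$, not $(1-t)^{-2}$. With this correction, a single application of $\Delta$ (not two) gives $\Delta p_{R/J} = p_{R^{(1)}/J^{(1)}}$, which is consistent with your Pascal's-rule computation.

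For comparison, the paper's proof avoids both issues by working with $I$ rather than $J$: since $I$ is saturated, $x_n$ is a nonzerodivisor on $R/I$, so multiplication by $x_n$ gives an exact sequence
\[
0 \longrightarrow (R/I)(-1) \xrightarrow{\ x_n\ } R/I \longrightarrow R^{(1)}/I^{(1)} \longrightarrow 0,
\]
whence $p_{R^{(1)}/I^{(1)}} = \Delta p$ directly. One then observes that $J^{(1)}$ is the saturation of $I^{(1)}$ in $R^{(1)}$, so $p_{R^{(1)}/J^{(1)}} = p_{R^{(1)}/I^{(1)}} = \Delta p$. This is shorter and sidesteps the need to compare $p_{R/I}$ with $p_{R/J}$.
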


\begin{proof}
Setting $I^{(1)} = I \cdot R^{(1)}$, multiplication by $x_n$ induces 
the exact sequence
$$\begin{CD} 0 @>>> R/I(-1) @>x_n>> R/I @>>> R^{(1)}/I^{(1)} @>>> 0, 
\end{CD}$$
since $x_n$ is not a zero divisor of $R/I$. Now, 
$p_{R^{(1)}/I^{(1)}}(z) = \Delta p(z).$ Passing to $J^{(1)}$, the 
saturation of $I^{(1)}$, does not change the Hilbert polynomial, so 
$p_{R^{(1)}/J^{(1)}} (z) = \Delta p(z)$.
\end{proof}

This result can be extended. If $p(z)$ is a Hilbert polynomial of 
degree $d$, we set  $\Delta^0 p(z):= p(z)$, and recursively define 
$\Delta^j p(z):= \Delta^{j-1} p(z) - \Delta^{j-1} p(z-1)$ for 
$1 \leq j \leq d$. Thus, $\Delta = \Delta^1$. Now, if $I$ is a 
saturated strongly stable ideal, then, for $0 \leq j \leq d$, we denote 
by $I^{(j)} \subset R^{(j)}$, the saturated strongly stable ideal whose 
generating set is obtained by setting $x_{n-j} = \ldots = x_{n-1} = 1$ 
in the monomial generators of $I$. Note that the ideal $I^{(j+1)} \cdot 
R^{(j)}$ is the double saturation of $I^{(j)}$. Repeating the argument 
in Lemma \ref{Delta-p} shows that $\Delta^j p(z)$ is the Hilbert 
polynomial of the ideal $I^{(j)}$:

\begin{cor} \label{Delta-j-p}
If $I$ is a saturated strongly stable ideal with Hilbert polynomial 
$p(z)$ of degree $d$, and $I^{(j)} \subset R^{(j)}$ is the ideal 
obtained by setting $x_{n-j} = \ldots = x_{n-1} = 1$ in the monomial 
generators of $I$, then the Hilbert polynomial of $I^{(j)}$ is 
$p_{R^{(j)}/I^{(j)}} (z) = \Delta^j p(z)$ for $0 \leq j \leq d$.
\end{cor}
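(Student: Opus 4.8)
The plan is to prove Corollary~\ref{Delta-j-p} by induction on $j$, using
Lemma~\ref{Delta-p} as the base of an inductive step, together with the
observation (already noted in the text preceding the statement) that
$I^{(j+1)} \cdot R^{(j)}$ is the double saturation of $I^{(j)}$. The case
$j = 0$ is trivial since $\Delta^0 p = p$ by definition and $I^{(0)} = I$.

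For the inductive step, suppose $0 \leq j \leq d-1$ and that the Hilbert
polynomial of $I^{(j)} \subset R^{(j)}$ equals $\Delta^j p(z)$. The key point
is that $I^{(j)}$ is itself a saturated strongly stable ideal in the polynomial
ring $R^{(j)}$ — this is exactly the content of the construction: setting the
top variables equal to $1$ preserves strong stability (it is the iterated
saturation of Remark~\ref{double-sat}) and produces a saturated ideal. Hence
Lemma~\ref{Delta-p}, applied to $R^{(j)}$ in place of $R$, says that the
double saturation of $I^{(j)}$, restricted to $R^{(j+1)}$, has Hilbert
polynomial $\Delta\bigl(\Delta^j p\bigr)(z)$. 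But the double saturation of
$I^{(j)}$ is $I^{(j+1)} \cdot R^{(j)}$, so its restriction to $R^{(j+1)}$ is
$I^{(j+1)}$ itself. By definition $\Delta\bigl(\Delta^j p\bigr)(z)
= \Delta^{j+1} p(z)$, which completes the induction.

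The only point that needs a little care — and the one I expect to be the main
(though minor) obstacle — is checking that Lemma~\ref{Delta-p} is genuinely
applicable with $R^{(j)}$ playing the role of $R$. Lemma~\ref{Delta-p} is
stated for the ambient ring $R = K[x_0,\ldots,x_n]$, but its proof only uses
that we have a polynomial ring in at least two variables and a saturated
strongly stable ideal therein, so it transfers verbatim: one needs
$n - j \geq 1$, i.e. that $R^{(j)}$ still has the two ``last'' variables
$x_{n-j-1}, x_{n-j}$ available for the double-saturation construction, which
holds because $j \leq d-1 < n$ (recall $n \geq 2$ and $d < n$, as the Hilbert
polynomial of a nontrivial quotient of $R$ has degree at most $n-1$). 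One
should also confirm that passing from $I^{(j)}$ to $I^{(j+1)} \cdot R^{(j)}$
indeed realizes the double saturation of $I^{(j)}$ in the sense of
Remark~\ref{double-sat}(iii): setting $x_{n-j-1} = x_{n-j} = 1$ in the
generators of $I^{(j)}$ is the same as setting $x_{n-j-1} = \cdots = x_{n-1} =
1$ in the generators of $I$, which is the defining description of $I^{(j+1)}$.
With these bookkeeping points in place the argument is complete; most of the
work was already done in Lemma~\ref{Delta-p}, and the corollary is essentially
its iteration.
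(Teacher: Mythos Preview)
Your proof is correct and takes essentially the same approach as the paper, which simply remarks that the result follows by ``repeating the argument in Lemma~\ref{Delta-p}.'' You have spelled out the induction and the bookkeeping (that $I^{(j)}$ is saturated strongly stable in $R^{(j)}$, that its double saturation restricts to $I^{(j+1)}$, and that Lemma~\ref{Delta-p} applies verbatim in $R^{(j)}$) more carefully than the paper does, but the underlying argument is identical.
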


We are now ready to prove the main result of this section.

\begin{thm} \label{link-all-ideals}
Let $I \subsetneqq R$ be a saturated strongly stable ideal with Hilbert 
polynomial $p(z)$ of degree $d$. Then there is a finite sequence of 
expansions (in the appropriate rings) that take the ideal $(1) = 
R^{(d)}$ to the ideal $I \subset R$.

In particular, the number of expansions needed in $R^{(j)}$ to take 
$I^{(j+1)} \cdot R^{(j)}$ to $I^{(j)}$ is $\Delta^{j} p(z) - 
p_{R^{(j)}/I^{(j+1)} R^{(j)}}(z)$, which, in the notation of Theorem 
\ref{L_p}, is at most $a_j$, for $j = 0,\ldots,d$. The total number 
of expansions needed to take $(1) = R^{(d)}$ to the ideal $I \subset 
R$ is at most $b_0$.
\end{thm}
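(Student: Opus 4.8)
The plan is to reduce the statement to repeated application of two already-established facts: Corollary \ref{number-expan}, which counts expansions between an ideal and its double saturation as the difference of Hilbert polynomials, and Corollary \ref{Delta-j-p}, which identifies $\Delta^j p$ as the Hilbert polynomial of $I^{(j)}$. The key structural observation is that the double saturation of $I^{(j)} \subset R^{(j)}$ is exactly the extension $I^{(j+1)} \cdot R^{(j)}$ (noted already in the text just before Corollary \ref{Delta-j-p}). So the descending chain $I = I^{(0)}$, then its double saturation $I^{(1)} R^{(0)}$, which restricts to $I^{(1)} \subset R^{(1)}$, whose double saturation is $I^{(2)} R^{(1)}$, and so on, terminates after $d$ steps at $I^{(d)} \subset R^{(d)}$. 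Since $p$ has degree $d$, $\Delta^d p$ is a nonzero constant and $\Delta^{d+1} p = 0$; Lemma \ref{constant-Hilbert-polynomial} then gives that $I^{(d)}$ has double saturation $(1) = R^{(d)}$ — but $R^{(d)} = K[x_0]$ has dimension one, so a saturated strongly stable ideal there with constant Hilbert polynomial is already principal, and in fact $I^{(d)}$ itself reaches back to $(1)$ via expansions by that lemma.

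First I would set up the chain explicitly and invoke Corollary \ref{expan-from-double-sat} (via Lemma \ref{contr-to-double-sat}) in each ring $R^{(j)}$: there is a finite sequence of expansions in $R^{(j)}$ taking the double saturation of $I^{(j)}$, namely $I^{(j+1)} R^{(j)}$, to $I^{(j)}$. Concatenating these sequences for $j = d-1, d-2, \ldots, 0$ — after the usual harmless identification of an ideal in $R^{(j+1)}$ with its extension to $R^{(j)}$ — produces a finite sequence of expansions (in the appropriate rings) from $R^{(d)}$ down to $I$. The base case $j = d$ is handled separately by Lemma \ref{constant-Hilbert-polynomial} applied in $R^{(d)}$, since $I^{(d)}$ has constant Hilbert polynomial $\Delta^d p$.

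For the count: by Corollary \ref{number-expan}, the number of expansions needed in $R^{(j)}$ to pass from $I^{(j+1)} R^{(j)}$ to $I^{(j)}$ is $p_{R^{(j)}/I^{(j)}} - p_{R^{(j)}/I^{(j+1)} R^{(j)}} = \Delta^j p(z) - p_{R^{(j)}/I^{(j+1)} R^{(j)}}(z)$, using Corollary \ref{Delta-j-p} for the first term. To bound this by $a_j$, I would compare with the lexicographic ideal $L_p$: by Proposition \ref{prop-smallest-Hilb}(b), $R/I$ has Hilbert function (hence Hilbert polynomial in each intermediate ring, after applying the $\Delta$-operator, which is compatible with restriction) bounded below by that of $L_p$, and a direct inspection of the generators of $L_p$ listed in Theorem \ref{L_p} shows the analogous count for $L_p$ is exactly $a_j$ — the exponent of $x_{n-1-j}$ that one strips off in passing from one level to the next is $a_j$, cf. the discussion of Corollary \ref{expan-from-double-sat} where the expansion count is the sum of the relevant exponents. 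Summing $a_0 + a_1 + \cdots + a_d = b_0$ (the telescoping relation $b_i = a_d + \cdots + a_i$ given in Theorem \ref{L_p}) gives the total bound.

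The main obstacle I anticipate is the inequality $\Delta^j p(z) - p_{R^{(j)}/I^{(j+1)} R^{(j)}}(z) \le a_j$: this is not merely formal bookkeeping, because it asserts that the ``worst'' (largest) number of expansions at each level is realized by the lexicographic ideal, and one must argue that the Hilbert-function domination of Proposition \ref{prop-smallest-Hilb}(b) transfers correctly through the double-saturation/restriction operations at every level — i.e., that $I^{(j+1)} R^{(j)}$ dominates the corresponding truncation of $L_p$ in Hilbert polynomial. I would prove this by downward induction on $j$, using that $I^{(j+1)}$ is the double saturation of $I^{(j)}$ so its Hilbert polynomial is $\Delta^{j+1} p$, together with the explicit form of $L_p$'s generators; the bound $a_j$ then appears as the constant Hilbert polynomial $\Delta^j(\text{something}) $ or, more transparently, as the jump in the exponent pattern of $L_p$. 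Once this per-level bound is in hand, the total bound $b_0$ and the first assertion (finiteness of the expansion sequence) follow immediately.
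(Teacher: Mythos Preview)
Your proposal is correct and follows essentially the same route as the paper: iterate Corollary \ref{expan-from-double-sat} along the chain of double saturations $I^{(0)}, I^{(1)}, \ldots, I^{(d)}$, handle the constant-polynomial base case via Lemma \ref{constant-Hilbert-polynomial}, read off the expansion count from Corollaries \ref{number-expan} and \ref{Delta-j-p}, and bound each count by $a_j$ through comparison with the lexicographic ideal using Proposition \ref{prop-smallest-Hilb}(b). One small correction: $R^{(d)} = K[x_0,\ldots,x_{n-d}]$ need not be $K[x_0]$, but this is harmless since you already invoke Lemma \ref{constant-Hilbert-polynomial}, which covers the general case; and for the inequality you flag as the main obstacle, the paper makes your sketch precise exactly as you anticipate---it rewrites the bound as $p_{R^{(j)}/I^{(j+1)} R^{(j)}}(z) \ge p_{R^{(j)}/L_{\Delta^{j+1} p} R^{(j)}}(z)$ and obtains this by summing the Hilbert-function inequality $h_{R^{(j+1)}/I^{(j+1)}}(k) \ge h_{R^{(j+1)}/L_{\Delta^{j+1} p}}(k)$ from Proposition \ref{prop-smallest-Hilb}(b) over $k$, with no further induction needed.
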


\begin{proof}
Let $p(z)$ be the Hilbert polynomial of $R/I$. We induct on the degree 
of $p(z)$. If $\deg p = 0$, then we are done by Lemma 
\ref{constant-Hilbert-polynomial}. Assume $\deg p > 0$. Since 
$\deg \Delta^1 p = \deg p - 1$, we conclude by the induction 
hypothesis that there is a finite sequence of expansions that takes 
the ideal $(1) \subset R^{(d)}$ to $J^{(1)} = \sat_{x_{n-1},x_n}(I) 
\subset R^{(1)}$, the double saturation of $I$ as an ideal in 
$R^{(1)}$. Considering the corresponding extension ideal in $R$, the 
ideal $I$ can be obtained from $J^{(1)} \cdot R$ by Corollary 
\ref{expan-from-double-sat} using a finite number of expansions.

The claim that the number of expansions needed in the ring $R^{(j)}$ 
to take the ideal $I^{(j+1)} \cdot R^{(j)}$ to the ideal $I^{(j)}$ 
is $\Delta^{j} p(z) - p_{R^{(j)}/I^{(j+1)} R^{(j)}}(z)$ follows from 
Corollaries \ref{number-expan} and \ref{Delta-j-p}. Thus it remains 
to show that
\begin{equation} \label{eq:steps}
 \Delta^{j} p(z) - p_{R^{(j)}/I^{(j+1)} R^{(j)}}(z) \leq a_j
\end{equation}
because the final assertion then follows by recalling that  
$b_0 = a_0 + \cdots + a_d$.

In order to establish Inequality \eqref{eq:steps} write  the given 
Hilbert polynomial as in Equation \eqref{formula-Hilbert-polynomial} as
\begin{equation*}
p(z) = \sum_{i=0}^{d} \left[\binom{z+i}{i+1}-\binom{z+i-b_i}{i+1}\right].
\end{equation*}
By Lemma \ref{Delta-p}, the Hilbert polynomial of $I^{(j)}$ is
\begin{eqnarray*}
  p_{R^{(j)}/I^{(j)}} (z) & = & \Delta^j p (z) \\
  & = & \sum_{i=j}^{d}\left[\binom{z+i-j}{i+1-j}
  -\binom{z+i-b_i-j}{i+1-j}\right].
\end{eqnarray*}
Using Theorem \ref{L_p}, it follows that exactly $a_j$ expansions in 
$R^{(j)}$ are needed to take the lexicographic ideal $L_{\Delta^{j+1} 
p} R^{(j)}$  to the lexicographic ideal $L_{\Delta^{j} p}$ of 
$R^{(j)}$. Since $R^{(j+1)}/I^{(j+1)}$ and $R^{(j+1)}/L_{\Delta^{j+1} 
p}$ have the same Hilbert polynomial, namely $\Delta^{j+1} p$, 
Inequality \eqref{eq:steps} is equivalent to
\begin{equation} \label{Hilb-poly-ineq}
p_{R^{(j)}/I^{(j+1)} R^{(j)}}(z) \geq p_{R^{(j)}/L_{\Delta^{j+1} p} 
R^{(j)}}(z).
\end{equation}
(The difference of the two polynomials is a constant.) However,  
the latter estimate is a consequence of Proposition 
\ref{prop-smallest-Hilb}(b) because 
$L_{\Delta^{j+1}p}\subset R^{(j+1)}$ is the saturation of 
$L_{\Delta^{j} p} R^{(j+1)}$ in $R^{(j+1)}$, so, for all integers $k$,
$$h_{R^{(j+1)}/I^{(j+1)}}(k) \geq h_{R^{(j+1)}/L_{\Delta^{j+1} p}}(k).$$
Summing over $k$ on both sides of this inequality, we get the Hilbert 
functions of $R^{(j)}/I^{(j+1)} R^{(j)}$ and $R^{(j)}/L_{\Delta^{j+1} 
p} R^{(j)}$, respectively. Now, Inequality \ref{Hilb-poly-ineq} follows.
\end{proof}

Note that the estimate on the number of needed expansions is sharp. 
This follows from Lemma \ref{almost-lex-double-sat} below.

The particular expansions leading to the lexicographic ideal $L_p$ 
can be made explicit.

\begin{rem}\label{rem-expan-lex-ideal}
In Theorem \ref{link-all-ideals}, the lexicographic ideal will be 
obtained if, at each step, the minimal monomial generator to be 
expanded is of the highest degree, and is smallest according to the 
lexicographic order in that degree. This follows by Proposition 
\ref{prop-smallest-Hilb}(b) and Lemma  \ref{exist-some-expan-contr}.
\end{rem}

Using Theorem \ref{link-all-ideals} and its proof, we can now  give 
the desired algorithm to compute all saturated strongly stable ideals 
with a prescribed Hilbert polynomial.

\begin{alg} \label{alg-stable-Hilbpoly} (\emph{Generating all 
saturated strongly stable ideals with a given Hilbert polynomial})
Let $p(z)$ be a nonzero Hilbert polynomial of degree $d$ of a 
graded quotient of $R$.
\begin{enumerate}
\item Compute the polynomials $\Delta^1 p(z)$, $\Delta^2 p(z)$, 
$\ldots$, $\Delta^d p(z)$. (Note that $\Delta^d p(z) = c$ for some 
$c \in \mathbb{N}$.)
    Set $\Set^{(d)} = \cdots = \Set^{(0)} = \varnothing$.
\item Generate $\Set^{(d)}$, the set of all saturated strongly 
stable ideals $I$ in $R^{(d)}$ with Hilbert polynomial 
$p_{R^{(d)}/I} (z) = \Delta^d p (z) = c$, using $c$ successive 
expansions of monomial generators starting with the ideal 
$(1) = R^{(d)}$. Exhaust all choices for $c$ successive expansions.
\item For $j = d-1, \; d-2, \; \ldots, \; 0$, repeat the following 
steps for each ideal $I \in \Set^{(j+1)}$: \\ Compute 
$p_{R^{(j)}/I}(z)$ (using Equation \eqref{compute-Hilbert-polynomial}). 
Let $a=\Delta^j p(z)-p_{R^{(j)}/I}(z)$.
  \begin{itemize}
  \item If $a \geq 0$, then perform $a$ successive expansions of 
  monomial generators of $I$ to obtain ideals with Hilbert 
  polynomial $\Delta^{j} p(z)$. Exhaust all choices for $a$ 
  successive expansions. Add these ideals to $\Set^{(j)}$.
  \item If $a<0$, then continue with the next ideal $I$ in 
  $\Set^{(j+1)}$.
  \end{itemize}
\item Return the set $\Set^{(0)}$.
\end{enumerate}
\end{alg}

\begin{proof} (\emph{Correctness})
By Theorem \ref{link-all-ideals}, every saturated strongly stable 
ideal with Hilbert polynomial $p(z)$ will be generated by this 
algorithm, as long as every possible sequence of expansions is 
carried out at each step. Also, every ideal generated by this 
process will be saturated and strongly stable and have the desired 
Hilbert polynomial.

The algorithm terminates for any given Hilbert polynomial, since 
the number of steps performed in (3) is bounded by the degree of 
the Hilbert polynomial and the number of generators in each ideal 
computed in each loop is finite.
\end{proof}

Note that different algorithms to achieve the same goal have been 
proposed by Reeves in \cite{AR} and Cioffi, Lella, Marinari, and 
Roggero in \cite{CLMR}. We defer a comparison of these algorithms 
to Remark \ref{comparison-of-algorithms}.

When carrying out Algorithm \ref{alg-stable-Hilbpoly}, one can order 
the expansions so that each ideal is produced in a unique way.

\begin{rem} \label{make-expan-unique}
One natural ordering of minimal generators is to always list the 
monomials first by degree in increasing order and then 
lexicographically in each degree. When expanding in some ring 
$R^{(j)}$, always pick monomials, which precede all other monomials 
that have been expanded in this ring (those monomials divisible by 
the variable $x_{n-j-1}$). (Thus, the expanded monomials, leading 
to a certain ideal, will be strictly increasing according to this 
order and, hence, unique.) This is the reverse of the order for 
contractions discussed in Remark \ref{rem-contr-scheme}.
\end{rem}

We include an example to illustrate this algorithm.

\begin{ex}\label{ex-first-alg}
Suppose we wish to find all saturated strongly stable ideals with 
Hilbert polynomial $p(z) = \frac{3}{2} z^2 + \frac{5}{2} z = 
\binom{z+2}{3}-\binom{z-1}{3}+\binom{z+1}{2}-\binom{z-3}{2}+
\binom{z}{1}-\binom{z-5}{1}$ in $R = K[x_0,x_1,x_2,x_3,x_4]$.

\begin{itemize}
\item First we compute $\Delta^1 p(z)$ and $\Delta^2 p(z)$: 
$$\Delta^1 p(z) = 3z + 1, \qquad \Delta^2 p(z) = 3$$

\item Next we generate all ideals in $R^{(2)} = K[x_0,x_1,x_2]$ 
with Hilbert polynomial $\Delta^2 p(z) = 3$ using 3 successive 
expansions and starting from $(1) = R^{(2)}$. We get two ideals: 
$$I = (x_0,x_1^3), \qquad J = (x_0^2,x_0x_1,x_1^2)$$

\item Now we generate all ideals in $R^{(1)}$ with Hilbert 
polynomial $\Delta^1 p(z) = 3z + 1$. We compute the Hilbert 
polynomials of $I$ and $J$ in $R^{(1)}$: 
$$p_{R^{(1)}/I}(z) = 3z, \qquad p_{R^{(1)}/J}(z) = 3z + 1$$ 
We perform one expansion in $I$ to obtain the following ideals: 
$$I_1=(x_0,x_1^4,x_1^3x_2), \qquad I_2=(x_0^2,x_0x_1,x_0x_2,x_1^3)$$ 
We perform no expansions in $J$ (as it already has the desired 
Hilbert polynomial).

\item Finally we generate all ideals in $R$ with Hilbert 
polynomial $p(z) = 3z^2/2 + 5z/2$. We compute the Hilbert 
polynomials of $I_1$, $I_2$, and $J$: 
$$p_{R/I_1}(z) =  3z^2/2 + 5z/2 - 1, \qquad p_{R/I_2}(z) 
= 3z^2/2 + 5z/2 + 1 $$ 
$$ p_{R/J}(z) = 3z^2/2 + 5z/2 + 1$$ 
We ignore the ideals $I_2$ and $J$ because their Hilbert 
polynomials in $R$ are too large. We perform one expansion 
in $I_1$ to obtain the following ideals: 
$$(x_0, x_1^4, x_1^3x_2^2, x_1^3x_2x_3), \qquad 
(x_0^2, x_0x_1, x_0x_2, x_0x_3, x_1^4, x_1^3x_2)$$
\end{itemize}

Thus, there are two saturated strongly stable ideals in $R$ 
with Hilbert polynomial $p(z) = 3z^2/2 + 5z/2$. Note that 
$(x_0, x_1^4, x_1^3x_2^2, x_1^3x_2x_3)$ is the lexicographic 
ideal.
\end{ex}

\section{Almost lexsegment ideals with a given Hilbert polynomial} 
\label{almost-lex-section}

In this section, we develop an algorithm for producing a unique 
ideal for each Hilbert series associated to a given Hilbert 
polynomial. This algorithm is helpful when looking for ideals 
with a fixed Hilbert polynomial, which have maximal Betti numbers.

We begin by introducing the class of strongly stable ideals in 
which we are now interested. If a strongly stable ideal is 
saturated, then no minimal monomial generators contain the last 
variable $x_n$. Thus, the ideal can be considered in the 
polynomial ring $R^{(1)}$, where the variable $x_n$ has been 
removed. This class of ideals is characterized by the fact 
that they are lexsegment ideals when viewed in the smaller ring 
$R^{(1)}$.

\begin{defn} \label{almost-lex-ideal}
A saturated strongly stable ideal $I \subset R$ is called 
\emph{almost lexsegment} if $I \cdot R^{(1)}$ is a lexsegment ideal.
\end{defn}

\begin{ex} \label{ex-almost-lex-ideal}
Consider the saturated strongly stable ideals $I_1=(a^2,ab,ac,b^2)$, 
$I_2=(a^2,ab,ac,b^3,b^2c)$, and $I_3=(a^2,ab,ac^2,b^3,bc^2)$ in 
$R=K[a,b,c,d]$. $I_1$, $I_2$ and $I_3$ are almost lexsegment 
ideals. $I_1$ is generated by the first four monomials of 
$R^{(1)}=K[a,b,c]$ in degree two. $I_2$ contains the first three 
monomials of $R^{(1)}$ in degree two, the first seven monomials of 
$R^{(1)}$ in degree three, etc; $I_3$ contains the first two 
monomials of $R^{(1)}$ in degree two, the first eight monomials 
of $R^{(1)}$ in degree three, etc.
\end{ex}

We will now focus on characterizing how to generate almost 
lexsegment ideals. The process will be similar to the previous 
algorithm, except for two simplifications: all lexsegment ideals 
have the same double saturation and are produced by certain expansions.

Recall the lexicographic ideal $L_p$ and the nonnegative integers 
$a_i$ introduced earlier in Theorem \ref{L_p}, which are 
associated to each Hilbert polynomial.

\begin{lem} \label{almost-lex-double-sat}
Every almost lexsegment ideal with Hilbert polynomial $p(z)$ has 
the same double saturation, namely $L_{\widetilde{p}}$, where 
$$\widetilde{p} (z) = p(z) - a_0.$$
\end{lem}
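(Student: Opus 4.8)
The plan is to show two things: first, that every almost lexsegment ideal $I$ with Hilbert polynomial $p$ has a double saturation that is the \emph{same} lexsegment ideal in $R^{(1)}$, regardless of $I$; and second, to identify that common double saturation as $L_{\widetilde p}$. The key observation is that $\sat_{x_{n-1},x_n}(I)$ is completely determined by the Hilbert polynomial of the ideal $I^{(1)} \cdot R^{(2)}$ obtained from $I$ by setting $x_{n-1} = 1$ — call this ideal $J$. Indeed, since $I$ is almost lexsegment, $I \cdot R^{(1)}$ is a lexsegment ideal in $R^{(1)}$, and hence (applying Proposition \ref{prop-smallest-Hilb}(a) and the explicit description from Theorem \ref{L_p}) the double saturation $J = \sat_{x_{n-1},x_n}(I) \cdot R^{(1)}$ is the \emph{saturated} lexsegment ideal in $R^{(1)}$ associated to the Hilbert polynomial $\Delta p$. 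By Theorem \ref{L_p}, a saturated lexsegment ideal is uniquely determined by its Hilbert polynomial, so $J = L_{\Delta p} \cdot R^{(1)}$ depends only on $p$, not on the choice of $I$. This establishes that all almost lexsegment ideals with Hilbert polynomial $p$ share the same double saturation.

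It remains to identify this common double saturation, as an ideal of $R$, with $L_{\widetilde p}$ for $\widetilde p(z) = p(z) - a_0$. First I would write $p$ as in Equation \eqref{formula-Hilbert-polynomial} with the associated integers $a_0, \ldots, a_d$ as in Theorem \ref{L_p}, and note via Lemma \ref{Delta-p} (or Corollary \ref{Delta-j-p}) that $\Delta p$ has degree $d-1$ and its own expansion has integers $a_1, \ldots, a_d$ — that is, passing from $p$ to $\Delta p$ simply drops $a_0$. Then using the explicit generating set for $L_{\Delta p}$ from Theorem \ref{L_p} (viewed in $R^{(1)} = K[x_0,\ldots,x_{n-1}]$) and comparing with the generating set for $L_{\widetilde p}$ in $R = K[x_0,\ldots,x_n]$, one sees directly that $L_{\widetilde p}$ is exactly the extension of $L_{\Delta p} \cdot R^{(1)}$ to $R$: the double saturation operation sets $x_{n-1} = x_n = 1$, and $\widetilde p = p - a_0$ is precisely the Hilbert polynomial whose lexicographic ideal, when restricted to $R^{(1)}$, gives $L_{\Delta p}$. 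Concretely, one should verify $\Delta \widetilde p = \Delta p$ (both polynomials differ by the constant $a_0$) and that $\widetilde p$ and $p$ have the same degree $d$, so $L_{\widetilde p}$ and $L_p$ have the same "shape" — they differ only in the exponent $a_0$ of the last variable $x_{n-1}$ appearing among the generators, and setting that variable to $1$ collapses this difference.

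Alternatively, and perhaps more cleanly, I would argue as follows. By Corollary \ref{number-expan}, the number of expansions taking the double saturation of any almost lexsegment ideal $I$ back to $I$ equals $p_{R/I} - p_{R/\sat_{x_{n-1},x_n}(I)}$. For $I = L_p$ itself (which is almost lexsegment, being a lexsegment ideal), the proof of Theorem \ref{link-all-ideals} shows this number is exactly $a_0$; hence the common double saturation has Hilbert polynomial $p - a_0 = \widetilde p$. Since the double saturation of a lexsegment ideal is again a \emph{saturated} lexsegment ideal (its restriction to $R^{(1)}$ is lexsegment and saturated, and extending trivially back to $R$ keeps it lexsegment and saturated), and since $\widetilde p$ has the same degree $d$ as $p$, uniqueness of the saturated lexsegment ideal with a given Hilbert polynomial (Theorem \ref{L_p}) forces the common double saturation to equal $L_{\widetilde p}$.

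The main obstacle I anticipate is the bookkeeping in the second paragraph: one must be careful about \emph{which ring} each lexsegment ideal lives in, since $L_{\Delta p}$ naturally lives in $R^{(1)}$ while $L_{\widetilde p}$ lives in $R$, and the double saturation $\sat_{x_{n-1},x_n}(I)$ is by Definition \ref{double-sat}(iii) the \emph{extension back to $R$} of a saturated ideal computed in $R^{(1)}$. Getting the indices in the generating sets from Theorem \ref{L_p} to line up — in particular checking that the exponent $a_0$ of $x_{n-1}$ is exactly what disappears — is the one genuinely computational point; the third-paragraph argument via Corollary \ref{number-expan} sidesteps most of this, so I would favor that route, using the explicit generating sets only as a sanity check.
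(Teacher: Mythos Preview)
Your proposal is correct and follows essentially the same argument as the paper: the double saturation of any almost lexsegment ideal $I$, restricted to $R^{(1)}$, is a saturated lexsegment ideal with Hilbert polynomial $\Delta p$, and such an ideal is unique by Theorem~\ref{L_p}. The paper organizes this slightly more cleanly by \emph{starting} with the observation that $L_{\widetilde p}$ is already doubly saturated (since its associated $a_0$-parameter is $0$, no minimal generator contains $x_{n-1}$), so that $L_{\widetilde p}\cdot R^{(1)}$ is immediately seen to be the unique saturated lexsegment ideal in $R^{(1)}$ with Hilbert polynomial $\Delta\widetilde p = \Delta p$; your second and third paragraphs reach the same conclusion but work harder for it, and the expansion-counting route of your third paragraph, while valid, is unnecessary once you have this direct identification. (One notational slip: ``$I^{(1)}\cdot R^{(2)}$'' in your first paragraph should read $\sat_{x_{n-1},x_n}(I)\cdot R^{(1)}$, as you correctly write a line later.)
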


\begin{proof}
Using the definition of $\widetilde{p}$, we see that the ideal 
$L_{\widetilde{p}}$ is doubly saturated by Theorem \ref{L_p} 
(because no minimal generator will be divisible by $x_{n-1}$). 
Thus, the ideal $L_{\widetilde{p}} \cdot R^{(1)} \subset R^{(1)}$ 
is the unique saturated lexsegment ideal of $R^{(1)}$ with 
Hilbert polynomial $\Delta p(z)$ by Lemma \ref{Delta-p}.

The double saturation of an almost lexsegment ideal $I \subset R$ 
with Hilbert polynomial $p(z)$ will also be a saturated lexsegment 
ideal in $R^{(1)}$ with Hilbert polynomial $\Delta p(z)$. Thus, 
the double saturation must be $L_{\widetilde{p}}$.
\end{proof}

Note that the uniqueness statement of the double saturation in 
Lemma \ref{almost-lex-double-sat} is equivalent  to  Proposition 
2.3 in \cite{CM}. The explicit description of the double 
saturation is new.

We give a name to the special expansions and contractions that 
were noted earlier in Lemma \ref{exist-some-expan-contr}.

\begin{defn} \label{lex-expan}
Let $I \subset R$ be an almost lexsegment ideal.
\begin{itemize}
\item[(i)] In any fixed degree, an expansion of the minimal 
monomial generator of $I$, which is last according to the 
lexicographic order, is called a \emph{lex expansion}.
\item[(ii)] In any fixed degree, a contraction of the monomial 
$x^A$ such that $x^A x_{n-1}$ is a minimal monomial generator, 
which is first according to the lexicographic order, is called 
a \emph{lex contraction}.
\end{itemize}
\end{defn}

Note that lex expansions and lex contractions are inverse 
operations. (For any lex expansion, there is a lex contraction 
which will undo it, and vice versa.)

Lex expansions and lex contractions are the only tools needed to 
produce almost lexsegment ideals:

\begin{lem} \label{expan-contr-almost-lex-ideals}
If $I$ is an almost lexsegment ideal, then applying a lex expansion 
or a lex contraction to $I$ will produce another almost lexsegment 
ideal.

In fact, the only expansions of almost lexsegment ideals which 
produce almost lexsegment ideals are the lex expansions, and, 
similarly, the only contractions of almost lexsegment ideals which 
produce almost lexsegment ideals are the lex contractions.
\end{lem}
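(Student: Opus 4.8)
The plan is to prove the two directions of the characterization by directly unwinding what it means for an expansion or contraction to preserve the ``lexsegment in $R^{(1)}$'' property. The first assertion (lex expansions and lex contractions preserve almost lexsegment ideals) and the second (these are the \emph{only} expansions and contractions doing so) can be handled together. First I would pass everything to the ring $R^{(1)}$: since an almost lexsegment ideal $I$ is saturated, $G(I)$ involves only $x_0, \ldots, x_{n-1}$, and $I^{\E}$ (resp.\ $I^{\C}$) is again saturated by Lemma~\ref{contr-expan-stable-ideals}, so it suffices to track what happens to $I \cdot R^{(1)}$ under the corresponding operation. An expansion of $x^A$ (with $r = \max(x^A) \le n-1$) replaces $x^A$ by $x^A x_r, x^A x_{r+1}, \ldots, x^A x_{n-1}$; viewed in $R^{(1)}$, this removes $x^A$ from degree $\deg x^A$ and, since $x^A x_n$ has disappeared, adds exactly the monomials $x^A x_r, \ldots, x^A x_{n-1}$ in degree $\deg x^A + 1$. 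By Remark~\ref{change-of-Hilbert-funct}, in $R^{(1)}$ this changes the Hilbert function only in degree $\deg x^A + 1$, where it drops by one. (I would double-check: removing $x^A$ frees up $n - r + 1$ monomials of degree $\deg(x^A)+1$ that were previously in the ideal as multiples of $x^A$, but $n - r$ of them are put back in as new generators, and one of the freed ones was the multiple by $x_n$, which does not exist in $R^{(1)}$; net change is $0$ in degree $\deg x^A$ and $-1$ in degree $\deg x^A + 1$ — consistent with applying $\Delta$ to the statement of Lemma~\ref{change-of-Hilbert-funct}(a).)

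Next I would use the combinatorial rigidity of lexsegment ideals. A lexsegment ideal in $R^{(1)}$ is determined in each degree by its Hilbert function, and it is saturated if and only if its last variable $x_{n-1}$ divides no minimal generator; an arbitrary almost lexsegment ideal $I$ need not be saturated in $R^{(1)}$, but $[I \cdot R^{(1)}]_j$ is still, by definition, the initial lexsegment of the appropriate length in each degree $j$. So $I \cdot R^{(1)}$ is completely pinned down by the sequence of lengths $\ell_j := \dim_K [I \cdot R^{(1)}]_j$. For the resulting ideal after the operation to be almost lexsegment, it is necessary and sufficient that the new ideal also be, degree by degree, an initial lexsegment. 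Here is the crux: if $x^A$ is the last (lex-smallest) minimal generator of $I$ in degree $d = \deg x^A$, then $x^A$ is precisely the lex-smallest monomial of $[I \cdot R^{(1)}]_d$, and its right-shifts $x^A x_r/x_r = x^A$ ... more precisely, the monomials $x^A x_r, \ldots, x^A x_{n-1}$ together with the already-present multiples of the other degree-$d$ generators of $I \cdot R^{(1)}$ form exactly an initial lexsegment in degree $d+1$ of length $\ell_{d+1} - 1$. This is because, $I \cdot R^{(1)}$ being lex in degree $d$, its degree-$(d+1)$ part is the lexsegment generated by $\{ x_j \cdot x^C : x^C \in [I\cdot R^{(1)}]_d,\ \max(x^C) \le j \le n-1\}$, and deleting the \emph{tail} contributions coming from the lex-smallest degree-$d$ monomial $x^A$ deletes exactly the lex-smallest monomial $x^A x_{n-1}$ of degree $d+1$, as $x^A x_{n-1}$ is the unique degree-$(d+1)$ monomial that is a multiple only of $x^A$ and of no larger degree-$d$ monomial in the ideal. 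Conversely, if one expands any \emph{other} minimal generator $x^B$ (not the lex-smallest in its degree), then $x^B$ has a lex-smaller degree-$(\deg x^B)$ generator $x^A$ in $I$, hence $x^B x_{n-1}$ is \emph{not} the lex-smallest monomial of degree $\deg x^B + 1$ in $I \cdot R^{(1)}$ (the monomial $x^A x_{n-1}$, or something between, is smaller), so removing it destroys the initial-lexsegment property in that degree — the result is not almost lexsegment.

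The contraction statement is the same argument run in reverse, using Lemma~\ref{expan-contr-inverse-ops}: a lex contraction undoes a lex expansion, and since lex expansions take almost lexsegment ideals to almost lexsegment ideals, so do lex contractions; and an arbitrary contraction of an almost lexsegment ideal that produced an almost lexsegment ideal would, upon re-expanding the same monomial $x^A$, have to be the reversal of an expansion producing an almost lexsegment ideal, hence the reversal of a lex expansion, hence a lex contraction. I would spell this out: if $x^A$ is contractible in the almost lexsegment ideal $I$ and $I^{\C}$ is almost lexsegment, then $x^A$ is expandable in $I^{\C}$ with $(I^{\C})^{\E} = I$; since $I^{\C}$ is almost lexsegment and the expansion of $x^A$ in it is almost lexsegment, the first part forces $x^A$ to be the lex-last minimal generator of $I^{\C}$ in its degree — which is precisely the condition that $x^A x_{n-1}$ be the lex-\emph{first} minimal generator of $I$ in degree $\deg x^A + 1$, i.e.\ that the contraction was a lex contraction.

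\textbf{Main obstacle.} The delicate point is the precise bookkeeping in the middle paragraph: verifying that deleting the right-shift contributions of exactly the lex-smallest degree-$d$ generator corresponds to deleting exactly the lex-smallest degree-$(d+1)$ monomial from $I \cdot R^{(1)}$, and — for the ``only if'' direction — that expanding any other generator fails this. This requires a careful comparison, in the lexicographic order, of $x^B x_{n-1}$ against the degree-$(d+1)$ monomials already in $I\cdot R^{(1)}$ coming from lex-smaller degree-$d$ generators, but it is a finite, essentially elementary lexicographic computation once one writes out that $[I\cdot R^{(1)}]_{d+1}$ is itself an initial lexsegment whose smallest element is $x^A x_{n-1}$ where $x^A$ is the smallest element of $[I\cdot R^{(1)}]_d$.
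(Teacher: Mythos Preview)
Your overall strategy --- pass to $R^{(1)}$, observe that expansion/contraction changes $I\cdot R^{(1)}$ in exactly one degree, and then invoke the rigidity of lexsegments --- is the same as the paper's. The paper's proof is essentially two sentences: expanding $x^A$ of degree $d$ changes $[I\cdot R^{(1)}]_j$ only for $j=d$, by removing the single monomial $x^A$; this preserves the initial-lexsegment property in degree $d$ iff $x^A$ is the lex-smallest element of $[I\cdot R^{(1)}]_d$, which happens exactly when $x^A$ is the lex-smallest minimal generator in that degree. The contraction case is the mirror image (adding $x^A$ to degree $d$).

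Your execution, however, contains a genuine error: you place the change in degree $\deg x^A + 1$ rather than $\deg x^A$. Your own parenthetical check nearly catches this --- you correctly observe that in $R^{(1)}$ all of $x^A x_r,\ldots,x^A x_{n-1}$ remain in degree $d+1$ (they were already there as multiples of $x^A$, and now they are generators), so the net change there is $0$, not $-1$. Applying $\Delta$ to Lemma~\ref{change-of-Hilbert-funct}(a) gives $\Delta(h_{R/I^{\E}}-h_{R/I})(j)=1$ precisely at $j=d$ and $0$ elsewhere; you have shifted this by one. Because of this, your entire ``crux'' paragraph analyses degree $d+1$ and the monomial $x^A x_{n-1}$, when the relevant degree is $d$ and the relevant monomial is $x^A$ itself. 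Once you correct the degree, the argument collapses to something much shorter: for a lexsegment ideal $J\subset R^{(1)}$, the set $R^{(1)}_1\cdot[J]_{d-1}$ is again an initial lexsegment, so the minimal generators of $J$ in degree $d$ are exactly the lex-smallest monomials of $[J]_d$; hence removing one of them keeps $[J]_d$ an initial lexsegment iff it is the very smallest. Your reduction of the contraction case to the expansion case via Lemma~\ref{expan-contr-inverse-ops} is fine in principle, but it too should be rewired to degree $d$ rather than $d+1$.
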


\begin{proof}
Assume $I \subset R$ is an almost lexsegment ideal.

Expanding a monomial $x^A$ of degree $d$ only changes the ideal 
$I \cdot R^{(1)}$ in degree $d$ (by removing the monomial $x^A$ 
from $I$). If $x^A$ is the smallest minimal monomial generator of 
$I$ in degree $d$ according to the lexicographic order, then the 
expansion of $x^A$ will be an almost lexsegment ideal. Expanding a 
monomial of degree $d$ which comes before $x^A$ in the lexicographic 
order will produce an ideal which is not almost lexsegment.

Similarly, contracting a monomial $x^A$ of degree $d$ only changes 
the ideal $I \cdot R^{(1)}$ in degree $d$ (by adding the monomial 
$x^A$). If $x^A$ is the largest minimal monomial generator of $I$ 
in degree $d$ according to the lexicographic order, then the 
contraction of $x^A$ will be an almost lexsegment ideal. Contracting 
a monomial of degree $d$ which comes after $x^A$ in the lexicographic 
order will produce an ideal which is not almost lexsegment.
\end{proof}

We illustrate the last lemma with an example.

\begin{ex}
 Consider again the almost lexsegment ideals $I_1=(a^2,ab,ac,b^2)$, 
$I_2=(a^2,ab,ac,b^3,b^2c)$, and $I_3=(a^2,ab,ac^2,b^3,bc^2)$ in 
$R=K[a,b,c,d]$ from Example \ref{ex-almost-lex-ideal}.

Observe that the smallest monomial generator in $I_1$ of degree 
two, according to the lexicographic order, is $b^2$. This monomial 
is expandable, and expanding it produces the almost lexsegment 
ideal $I_2$. The monomial $ac$ is also expandable in $I_1$, but 
expanding it produces an ideal, $J=(a^2,ab,ac^2,b^2)$, which is 
not almost lexsegment (since $ac$ is not in $J$, $b^2$ is in 
$J$, and $ac >_{lex} b^2$).

Observe that there are two contractible monomials in $I_3$: $ac$ 
and $b^2$. As $ac$ is greater than $b^2$ in the lexicographic 
order, contracting $ac$ produces the almost lexsegment ideal 
$I_2$, while contracting $b^2$ produces the ideal $J$, which is 
not almost lexsegment.
\end{ex}

We summarize the above results:

\begin{cor} \label{lex-expan-almost-lex-ideals}
Each almost lexsegment ideal with Hilbert polynomial $p(z)$ can be 
obtained from its double saturation $L_{\widetilde{p}}$ through a 
sequence of $a_0$ lex expansions (exclusively) through almost 
lexsegment ideals.
\end{cor}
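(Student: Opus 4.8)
The plan is to combine the structural results already established in this section. First I would recall from Lemma~\ref{almost-lex-double-sat} that every almost lexsegment ideal $I$ with Hilbert polynomial $p(z)$ has the same double saturation, namely $L_{\widetilde p}$ with $\widetilde p(z) = p(z) - a_0$. By Corollary~\ref{expan-from-double-sat}, $I$ can be recovered from $L_{\widetilde p}$ (viewed as an extension ideal in $R$) by a finite sequence of expansions. The point to extract is the \emph{number} of expansions and the fact that they can be taken to be lex expansions.

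Next I would count the expansions. By Corollary~\ref{number-expan}, the number of expansions taking the double saturation $J = \sat_{x_{n-1},x_n}(I)$ to $I$ equals $p_{R/I} - p_{R/J} = p(z) - \widetilde p(z) = a_0$. So exactly $a_0$ expansions are needed. It then remains to argue that these expansions can all be taken within the class of almost lexsegment ideals, i.e., that they are lex expansions in the sense of Definition~\ref{lex-expan}. For this I invoke Lemma~\ref{expan-contr-almost-lex-ideals}: starting from the almost lexsegment ideal $L_{\widetilde p}$ (which is a lexsegment ideal, hence certainly almost lexsegment), each expansion that keeps us in the almost lexsegment class \emph{must} be a lex expansion, and conversely each lex expansion does keep us in that class. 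So by induction on the step count, the entire sequence of $a_0$ expansions from $L_{\widetilde p}$ to $I$ proceeds through almost lexsegment ideals, with every step a lex expansion.

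One subtlety to address carefully is that Corollary~\ref{expan-from-double-sat} guarantees \emph{some} finite sequence of expansions exists, but a priori those expansions need not be lex expansions. The resolution is the second (``only if'') half of Lemma~\ref{expan-contr-almost-lex-ideals}: since both the source $L_{\widetilde p}$ and the target $I$ are almost lexsegment, and since lex contractions are the inverse operations of lex expansions (as noted after Definition~\ref{lex-expan}), one can instead run the reverse process -- the contraction scheme of Remark~\ref{rem-contr-scheme} taking $I$ down to its double saturation -- and observe that at each stage the ideal is almost lexsegment, so each contraction must be a lex contraction, whence each reversed step is a lex expansion. This is really the main (though mild) obstacle: matching the abstract ``finite sequence of expansions'' from Corollary~\ref{expan-from-double-sat} with the canonical lex-expansion scheme. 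Everything else is bookkeeping. I would close by noting that the count $a_0$ also shows the estimate in Theorem~\ref{link-all-ideals} is sharp, as already remarked there.
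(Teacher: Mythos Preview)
Your proposal is correct and follows essentially the same approach as the paper: contract $I$ down to its double saturation via lex contractions, then reverse, and count using Corollary~\ref{number-expan} and Lemma~\ref{almost-lex-double-sat}. The paper's version is slightly more direct in that it never invokes Corollary~\ref{expan-from-double-sat} or the general scheme of Remark~\ref{rem-contr-scheme} at all; it simply notes that any almost lexsegment ideal that is not doubly saturated admits a lex contraction (Lemma~\ref{exist-some-expan-contr}), which again yields an almost lexsegment ideal (Lemma~\ref{expan-contr-almost-lex-ideals}), and iterates.

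One small point of presentation: in your final paragraph you write ``observe that at each stage the ideal is almost lexsegment, so each contraction must be a lex contraction.'' As stated this is circular --- the reason each stage remains almost lexsegment is precisely that the contractions of Remark~\ref{rem-contr-scheme} (largest-in-lex among those with $x^A x_{n-1}$ a minimal generator, in the relevant degree) \emph{are} lex contractions, which then preserve the almost lexsegment property by Lemma~\ref{expan-contr-almost-lex-ideals}. Reversing the order of these two clauses fixes the logic; or, as the paper does, bypass Remark~\ref{rem-contr-scheme} entirely and just perform lex contractions from the outset.
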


\begin{proof}
If an almost lexsegment ideal is not doubly saturated, then we can 
perform a lex contraction to produce another almost lexsegment 
ideal. Repeating a finite number of times will yield the double 
saturation. Since lex expansions and lex contractions are inverse 
operations, we can go the other direction.

The number of needed expansions is $a_0$ by Lemma 
\ref{almost-lex-double-sat} and Corollary \ref{number-expan}.
\end{proof}

Combining Corollaries \ref{number-expan} and 
\ref{lex-expan-almost-lex-ideals}  yields the following procedure.

\begin{alg} \label{alg-lex-Hilbpoly}
(\emph{Generating all almost lexsegment ideals with a given Hilbert 
polynomial}) Let $p(z)$ be a nonzero Hilbert polynomial of some 
graded quotient of $R$.
\begin{enumerate}
\item Compute $a_0$ from $p(z)$ and the double saturation of the 
lexicographic ideal, $L_{\widetilde{p}}$ (as in Theorem \ref{L_p}), 
where $\widetilde{p} (z) = p(z) - a_0$.
\item Perform $a_0$ successive lex expansions of monomial generators 
of $L_{\widetilde{p}}$. Exhaust all choices for $a_0$ successive lex 
expansions.
\end{enumerate}
\end{alg}

The following example illustrates this process.

\begin{ex}
Suppose we wish to find all almost lexsegment ideals with Hilbert 
polynomial $p(z) = 2z^2 + z + 1$ in $R=K[x_0,x_1,x_2,x_3,x_4]$.

\begin{itemize}
\item First we compute the double saturation of the lexicographic 
ideal for $p$. The lexicographic ideal is 
$(x_0, x_1^5, x_1^4x_2^2, x_1^4x_2x_3^2)$ so
$$ L_{\widetilde{p}} = (x_0, x_1^5, x_1^4x_2). $$
Note that $a_0=2$.

\item Next we make two lex expansions in all possible ways to 
produce the following four almost lexsegment ideals with the 
desired Hilbert polynomial:
$$ (x_0, x_1^5, x_1^4x_2^2, x_1^4x_2x_3^2), \qquad 
(x_0, x_1^6, x_1^5x_2, x_1^5x_3, x_1^4x_2^2, x_1^4x_2x_3) $$ 
$$ (x_0^2, x_0x_1, x_0x_2, x_0x_3, x_1^5, x_1^4x_2^2, x_1^4x_2x_3), 
\qquad (x_0^2, x_0x_1, x_0x_2, x_0x_3^2, x_1^5, x_1^4x_2). $$
\end{itemize}
\end{ex}

As noted in the introduction, Caviglia and Murai \cite{CM} recently 
showed that there is a saturated ideal which achieves maximal total 
Betti numbers among all ideals with a given Hilbert polynomial. By 
a result of Bigatti, Hulett and Pardue, it is enough to consider 
almost lexsegment ideals when looking for ideals with maximal Betti 
numbers. Using Algorithm \ref{alg-lex-Hilbpoly}, one can determine 
all such ideals.

\begin{rem} \label{rem:alg-for-max-Betti-numbers}
If one only wants to produce the almost lexsegement ideals with 
maximal Betti numbers, suitable modifications significantly reduce 
the number of ideals that are produced in Algorithm 
\ref{alg-lex-Hilbpoly}. In fact, at the beginning of the algorithm 
it is enough to repeatedly expand {\em all} monomial of least degree 
in the ideal as many times as possible. The justification for this 
modification requires very different techniques and will appear in 
a forthcoming paper.
\end{rem}

Caviglia and Murai note in their paper \cite{CM} that their proof 
``is very long and complicated'' and their construction ``is not 
easy to understand.'' Examples \ref{max-Bettis-max-HF} and 
\ref{max-Bettis-min-HF} show that there can be more than one ideal 
with maximal Betti numbers. A simpler construction or proof could 
perhaps be found by choosing a different set of ideals. This 
motivates the questions: How many ideals attain maximal Betti 
numbers and how  can they be distinguished?

One idea is to consider the Hilbert function of the ideals in 
question. Because the ideals are almost lexsegment, their Hilbert 
functions will be distinct. One might hope that among all ideals 
with maximal Betti numbers, there is one which has a Hilbert 
function which is either larger in all degrees than the other 
Hilbert functions, or which is smaller in all degrees. 
Unfortunately, the following two examples show that this is 
not the case.

Notice however, that, by a result of Valla in \cite{GV}, among 
the almost lexsegment ideals with a constant Hilbert polynomial 
and maximal Betti numbers, there is one ideal  with a maximal 
Hilbert function. Such an ideal does not exist if the Hilbert 
polynomial has positive degree.

\begin{ex}\label{max-Bettis-max-HF}
In the polynomial ring $K[x_0,x_1,x_2,x_3,x_4]$, there are 509 
saturated strong\-ly stable ideals with Hilbert polynomial 
$p(z) = z^2 + 5z + 3$. Of these, 129 are almost lexsegment ideals, 
and four ideals attain maximal Betti numbers. All four ideals are 
obtained by making two lex expansions in the ideal
$$ (x_0^3, x_0^2x_1, x_0^2x_2,
x_0^2x_3, x_0x_1^2, x_0x_1x_2, x_0x_1x_3, x_0x_2^2, x_0x_2x_3,
x_0x_3^2, x_1^4, x_1^3x_2, x_1^3x_3, x_1^2x_2^3). $$

To maximize the Hilbert function, we want to expand in the smallest 
degree possible, but we have two choices: either we expand $x_0x_3^2$ 
and $x_1^2x_2^3$ (to maximize the Hilbert function in degree three) 
to obtain the ideal
$$ (x_0^3, x_0^2x_1, x_0^2x_2, x_0^2x_3, x_0x_1^2, x_0x_1x_2, 
x_0x_1x_3,x_0x_2^2, x_0x_2x_3, x_0x_3^3, x_1^4, x_1^3x_2, 
x_1^3x_3, x_1^2x_2^4, x_1^2x_2^3x_3), $$
or we expand $x_1^3x_2$ and $x_1^3x_3$ (to maximize the Hilbert 
function in degree four) to obtain
$$ (x_0^3, x_0^2x_1, x_0^2x_2, x_0^2x_3, x_0x_1^2, x_0x_1x_2, 
x_0x_1x_3, x_0x_2^2, x_0x_2x_3, x_0x_3^2, x_1^4, x_1^3x_2^2, 
x_1^3x_2x_3, x_1^3x_3^2, x_1^2x_2^3). $$
The Hilbert functions of these two ideals are incomparable.
\end{ex}

Minimal Hilbert functions among the ideals with maximal Betti 
numbers do not exist even in the case of a constant Hilbert 
polynomial.

\begin{ex}\label{max-Bettis-min-HF}
In the polynomial ring $K[x_0,x_1,x_2,x_3]$, there are 6,481 
saturated strongly stable ideals with Hilbert polynomial $p(z) 
= 31$. Of these, 2,649 are almost lexsegment ideals, and five 
ideals attain maximal Betti numbers. All five ideals are 
obtained by making eleven lex expansions in the ideal
$$ (x_0, x_1, x_2)^4. $$

To minimize the Hilbert function, we want to expand in the largest 
degree possible, but we have two choices: either we expand the last 
nine monomials in degree four and expand the last monomial in the 
largest degree twice more (to minimize the Hilbert function in 
degree four) to obtain the ideal
\begin{eqnarray*}
  \lefteqn{(x_0^4, x_0^3x_1, x_0^3x_2, x_0^2x_1^2, x_0^2x_1x_2, 
x_0^2x_2^2, x_0x_1^4, x_0x_1^3x_2, x_0x_1^2x_2^2, x_0x_1x_2^3,} 
  \hspace*{6cm} \\
  && x_0x_2^4, x_1^5, x_1^4x_2, x_1^3x_2^2, x_1^2x_2^3, x_1x_2^4, x_2^7),
\end{eqnarray*}
or we expand the last six monomials in degree four and the last 
five monomials in degree six (to minimize the Hilbert function in 
degree five) to obtain
\begin{eqnarray*}
  \lefteqn{(x_0^4, x_0^3x_1, x_0^3x_2, x_0^2x_1^2, x_0^2x_1x_2, 
x_0^2x_2^2, x_0x_1^3, x_0x_1^2x_2, x_0x_1x_2^2, x_0x_2^3,} 
  \hspace*{6cm} \\
  && x_1^6, x_1^5x_2, x_1^4x_2^2, x_1^3x_2^3, x_1^2x_2^4, x_1x_2^5, x_2^6).
\end{eqnarray*}
The Hilbert functions of these two ideals are incomparable.
\end{ex}

\section{Strongly stable ideals with a given Hilbert series}

We now present an algorithm for producing all saturated strongly 
stable ideals with a fixed Hilbert series. This process is similar 
to the procedure for producing the lexsegment ideal for a 
prescribed Hilbert series. In that procedure, one simply adds 
monomial generators, in the appropriate degree, according to the 
lexicographic order until the desired Hilbert series is obtained. 
We adapt this strategy by adding any monomial generator, in the 
appropriate degree, which yields another saturated strongly stable 
ideal. However, we make several observations to simplify this 
process and to make it easier to implement.

Monomial generators will be added to an ideal in order of 
increasing degree: generators in lowest degree will be added first, 
starting with a power of the variable $x_0$ (because if a principal 
ideal is strongly stable, it must be generated by a power of $x_0$) 
and ending with the generators of highest degree.

For each saturated strongly stable ideal $I$, we maintain a list, 
$L_I$, of the monomials which can be added to the generators of $I$, 
so that the resulting ideal is strongly stable. We also record the 
``remaining portion'' of the numerator of the Hilbert series, 
$f_I(t) = \sum_{i=0}^r C_it^i$, using Equation 
\ref{compute-Hilbert-series}. We always add monomials of degree 
$sd(f_I) = \min\{i:C_i \neq 0\}$, the smallest degree for which 
there is a non-zero coefficient in $f_I(t)$. Recall our notation 
$l_A = \max (x^A)$ for the max index of the monomial $x^A$.

To ensure that each saturated strongly stable ideal is created in 
a unique way, monomial generators are added lexicographically.

\begin{alg} \label{alg-Hilbert-series} (\emph{Computation of all 
saturated strongly stable ideals with a given Hilbert series}) 
Let $g(t)$ be the numerator of the non-reduced Hilbert series of 
a graded quotient of  $R$.
\begin{enumerate}
\item Set $\Set = \M = \varnothing$. Compute $f_{(0)}(t)=1-g(t)$ 
and $sd(f_{(0)})$. Add the ideal $I=(x_0^{sd(f_{(0)})})$ to $\M$. 
Update $f_I(t)$ to $f_{(0)}(t)-t^{sd(f_{(0)})}$, compute $sd(f_I)$, 
and set $L_I$ to $\{x_0^{sd(f_{(0)}) - 1} x_1^{sd(f_I)-sd(f_{(0)})+1}\}$.
\item Repeat until $\M$ is empty. Choose an ideal $I \in \M$. 
Do one of the following:
  \begin{itemize}
  \item If $f_I(t)=0$, remove the ideal $I$ from $\M$ and add it 
  to $\Set$.
  \item If $L_I=\varnothing$, remove $I$ from $\M$ and continue 
  with the next ideal in $\M$.
  \item If $f_I(t) \neq 0$ and $L_I \neq \varnothing$, remove $I$ 
  from $\M$ and replace it with the $|L_I|$ ideals obtained by 
  adding a single monomial $x^B$ from $L_I$ to the generators of 
  $I$. For each ideal $J_B$ added to $\M$, which is generated by 
  $G(I) \cup \{x^B\}$: update $f_{J_B}(t)$ to $f_I(t)-(1-t)^{l_B}
  t^{d_B}$, compute $sd(f_{J_B})$, and set $L_{J_B}$ to 
  $\{x^Ax_{l_A}^{sd(f_{J_B}) - d_A} : x^A \in L_I, x^B >_{lex} x^A\}$. 
  Do the following:
    \begin{itemize}
    \item If $l_B<n-1$ and $\Left (\frac{x_{l_B+1}}{x_{l_B}} x^B) 
    \subset I$, include $x^B x_{l_B}^{sd(f_J) - d_B - 1} x_{l_B+1}$ 
    in $L_{J_B}$.
    \item If $x_{l_B-1}|x^B$ and $\Left (\frac{x_{l_B}}{x_{l_B-1}} x^B) 
    \subset I$, include $x^B x_{l_B-1}^{-1} x_{l_B}^{sd(f_J) - d_B + 1}$ 
    in $L_{J_B}$.
    \end{itemize}
\end{itemize}
\item Return the set of ideals $\Set$.
\end{enumerate}
\end{alg}

\begin{proof} (\emph{Correctness})
Certainly, any ideal produced by the above process will be 
strongly stable (because we check that the ideal generated by 
$G(I) \cup \{x^B\}$ is strongly stable before adding the monomial 
$x^B$ to $L_I$) and saturated (because no monomials added to the 
set of generators will be divisible by the variable $x_n$), and it 
will have the desired Hilbert series (because the ideal is added 
to $\Set$ when the Hilbert series is correct).

We need to show that every saturated strongly stable is produced: 
specifically, for each ideal $I$ produced in the algorithm, $L_I$ 
contains every monomial $x^B$ which can be added (in the 
lexicographic order) to the ideal $I$ to produce a saturated 
strongly stable ideal, say $J$, generated by $G(I) \cup \{x^B\}$. 
Suppose that the ideal $J$ is strongly stable; then
$$ \left\{ \frac {x_i} {x_j} x^B : x_j|x^B, i<j \right\} \subset I 
\textrm{, so, in particular, } x^A = \frac {x_{l_B-1}} {x_{l_B}} 
x^B \in I. $$
Note that the monomial $x^A$ is the smallest monomial in the 
lexicographic order (in degree $d_B$), which must be contained in 
the ideal $I$ if $J$ is strongly stable. Turning this around, at 
most two new monomials, say $x^E$ and $x^F$, can be added to the 
generators of $I$ after the monomial $x^A$:
$$ x^E = \frac {x_{l_A+1}} {x_{l_A}} x^A \textrm{ (if $l_A < n-1$) 
and } x^F = \frac {x_{l_A}} {x_{l_A-1}} x^A \textrm{ (if $x_{l_A-1} | 
x^A$)}. $$
These monomials, $x^E$ and $x^F$, are precisely those which are 
included in $L_I$. The monomials $x^E$ and $x^F$ are added to $L_I$, 
provided that $\Left (x^E) \subset I$ or $\Left (x^F) \subset I$ so 
that the ideals generated by $G(I) \cup \{x^E\}$ and $G(I) \cup 
\{x^F\}$ are saturated and strongly stable. Thus, every monomial 
$x^B$, which can be added to the generators of an ideal $I$ to 
produce a saturated strongly stable ideal, appears in $L_I$, so 
the algorithm will generate all of the desired ideals.

The algorithm terminates for any given Hilbert series because each 
list $L_I$ is finite and, by \cite{GG}, there is an upper bound for 
the largest degree of a minimal generator of a saturated ideal that 
depends only on its Hilbert polynomial, which in turn is determined 
by the Hilbert series.
\end{proof}

We include an example to illustrate this algorithm.

\begin{ex}
Suppose we wish to find all saturated strongly stable ideals in 
$R=K[x_0,x_1,x_2,x_3,x_4]$ with Hilbert series 
$H_{R/I}(t) = \frac{1 - 6t^2 + 8t^3 - 3t^4}{(1-t)^5}$. Thus, the 
numerator of the Hilbert series is $1 - 6t^2 + 8t^3 - 3t^4$.

\begin{itemize}
\item We begin with the zero ideal. We compute 
$f_{(0)}(t) = 6t^2 - 8t^3 + 3t^4$ and $sd(f_{(0)}) = 2$ 
(because $6t^2$ is the smallest nonzero term in $f_{(0)}$). 
We add $I_1 = (x_0^2)$ to $\M$, update $f_{I_1}(t)$ to 
$f_{(0)}(t) - t^2 = 5t^2 - 8t^3 + 3t^4$, record $sd(f_{I_1}) = 2$, 
and set $L_{I_1}$ to $\{x_0x_1\}$.

\item We replace $I_1$ in $\M$ with a new ideal $I_2 = (x_0^2, x_0x_1)$. 
We update $f_{I_2}$ to $f_{I_1}(t) - (1 - t) t^2 = 4t^2 - 7t^3 + 3t^4$, 
record $sd(f_{I_2}) = 2$, and set $L_{I_2}$ to $\{x_0x_2, x_1^2\}$.

\item We replace $I_2$ in $\M$ with the two ideals $I_3 = 
(x_0^2, x_0x_1, x_0x_2)$ and $I_4 = (x_0^2, x_0x_1, x_1^2)$.
  \begin{itemize}
  \item $f_{I_3} = f_{I_2}(t) - (1 - t)^2 t^2 = 3t^2 - 5t^3 + 2t^4$, 
  $sd(f_{I_3}) = 2$, and $L_{I_3} = \{x_0x_3, x_1^2\}$
  \item $f_{I_4} = f_{I_2}(t) - (1 - t) t^2 = 3t^2 - 6t^3 + 3t^4$, 
  $sd(f_{I_4}) = 2$, and $L_{I_4} = \varnothing$ 
  (because $x_0x_2 >_{lex} x_1^2$ and $x_0x_2 \not\in I_4$ so 
  $x_1x_2$ cannot be added to $I_4$)
  \end{itemize}

\item We replace $I_3$ in $\M$ with the two ideals $I_5 = 
(x_0^2, x_0x_1, x_0x_2, x_0x_3)$ and $I_6 = 
(x_0^2, x_0x_1, x_0x_2, x_1^2)$. 
We ignore $I_4$ (because $L_{I_4} = \varnothing$).
  \begin{itemize}
  \item $f_{I_5} = f_{I_3}(t)-(1-t)^3 t^2 = 2t^2-2t^3-t^4+t^5$, 
  $sd(f_{I_5}) = 2$, and $L_{I_5} = \{x_1^2\}$
  \item $f_{I_6} = f_{I_3}(t) - (1 - t) t^2 = 2t^2 - 4t^3 + 2t^4$, 
  $sd(f_{I_6}) = 2$, and $L_{I_6} = \{x_1x_2\}$
  \end{itemize}

\item We replace $I_5$ in $\M$ with the ideal $I_7 = 
(x_0^2, x_0x_1, x_0x_2, x_0x_3, x_1^2)$, and we replace $I_6$ 
with the ideal $I_8 = (x_0^2, x_0x_1, x_0x_2, x_1^2, x_1x_2)$.
  \begin{itemize}
  \item $f_{I_7} = f_{I_5}(t) - (1 - t) t^2 = t^2 - t^3 - t^4 + t^5$, 
  $sd(f_{I_7}) = 2$, and $L_{I_7} = \{x_1x_2\}$
  \item $f_{I_8} = f_{I_6}(t) - (1 - t)^2 t^2 = t^2 - 2t^3 + t^4$, 
  $sd(f_{I_8}) = 2$, and $L_{I_8} = \{x_2^2\}$
  \end{itemize}

\item We replace $I_7$ in $\M$ with the ideal $I_9 = 
(x_0^2, x_0x_1, x_0x_2, x_0x_3, x_1^2, x_1x_2)$, and we replace $I_8$ 
with the ideal $I_{10} = (x_0^2, x_0x_1, x_0x_2, x_1^2, x_1x_2, x_2^2)$.
  \begin{itemize}
  \item $f_{I_9} = f_{I_7}(t) - (1 - t)^2 t^2 = t^3 - 2t^4 + t^5$, 
  $sd(f_{I_9}) = 3$, and $L_{I_9} = \{x_1x_3^2, x_2^3\}$ 
  (because we need to add monomials of degree 3)
  \item $f_{I_{10}} = f_{I_8}(t) - (1 - t)^2 t^2 = 0$ 
  (We do not need $sd(f_{I_{10}})$ or $L_{I_{10}}$.)
  \end{itemize}

\item We add $I_{10}$ to $\Set$, and we replace $I_9$ in $\M$ with 
the two ideals $I_{11} = \\
(x_0^2, x_0x_1, x_0x_2, x_0x_3, x_1^2, x_1x_2, x_1x_3^2)$ and 
$I_{12} = (x_0^2, x_0x_1, x_0x_2, x_0x_3, x_1^2, x_1x_2, x_2^3)$.
  \begin{itemize}
  \item $f_{I_{11}} = f_{I_9}(t) - (1 - t)^3 t^3 = t^4 - 2t^5 + t^6$, 
  $sd(f_{I_{11}}) = 4$, and $L_{I_{11}} = \{x_2^4\}$
  \item $f_{I_{12}} = f_{I_9}(t) - (1 - t)^2 t^3 = 0$ 
  (We do not need $sd(f_{I_{12}})$ or $L_{I_{12}}$.)
  \end{itemize}

\item We add $I_{12}$ to $\Set$, and we replace $I_{11}$ in $\M$ 
with the ideal $I_{13} = \\
(x_0^2, x_0x_1, x_0x_2, x_0x_3, x_1^2, x_1x_2, x_1x_3^2, x_2^4)$.
    \begin{itemize}
    \item $f_{I_{13}} = f_{I_{11}}(t) - (1 - t)^2 t^4 = 0$
    \end{itemize}

\item We add $I_{13}$ to $\Set$.
\end{itemize}

Thus, there are three saturated strongly stable ideals with the given 
Hilbert series:
$$(x_0^2, x_0x_1, x_0x_2, x_1^2, x_1x_2, x_2^2)$$ 
$$(x_0^2, x_0x_1, x_0x_2, x_0x_3, x_1^2, x_1x_2, x_2^3)$$
$$(x_0^2, x_0x_1, x_0x_2, x_0x_3, x_1^2, x_1x_2, x_1x_3^2, x_2^4)$$
\end{ex}

\section{Applications and related questions}
\label{sec:final}

We conclude by discussing  some questions that, we believe, deserve 
further investigation along with some initial results.

It is well known that saturated strongly stable ideals figure 
prominently in the combinatorial structure of the Hilbert scheme.
This motivates the following problem.

\begin{question}\label{count-ideals}
What is the number of saturated strongly stable ideals in $R$ with 
a given Hilbert polynomial $p$?

Is there an explicit formula or a generating function for this 
number that depends only on $p$ and the number of variables in $R$?
\end{question}

In an appendix to her thesis \cite{AR}, Reeves presents an 
algorithm for generating saturated strongly stable ideals with a 
given Hilbert polynomial. Also, another algorithm was proposed 
independently in a recent paper \cite{CLMR} by Cioffi, Lella, 
Marinari, and Roggero. We thank the authors for kindly pointing 
this out to us after we submitted the first version of this paper. 
We briefly discuss the differences between these algorithms.

\begin{rem} \label{comparison-of-algorithms}
Algorithm \ref{alg-stable-Hilbpoly} differs from the  algorithm 
presented by Reeves in \cite{AR}: Her algorithm first computes all 
Hilbert series associated to the desired Hilbert polynomial by pairs 
of contractions and expansions and then generates all saturated 
strongly stable ideals for each Hilbert series. A single Hilbert 
series or ideal may be generated a number of times in each of these 
steps. On the other hand, our algorithm directly creates all ideals, 
each in a unique way, building them in larger and larger rings. We 
also give direct methods for producing all Hilbert series to a 
particular Hilbert polynomial in Algorithm \ref{alg-lex-Hilbpoly} 
and all saturated strongly stable ideals with a particular Hilbert 
series in Algorithm \ref{alg-Hilbert-series} that appear more efficient.

Furthermore, Reeves uses special matrices to encode the set of 
monomial generators of a strongly stable ideal. On these matrices, 
a certain kind of elementary row operations is performed to compute 
other saturated strongly stable ideals with the same Hilbert series. 
One problem to be solved then is that the correspondence between 
such matrices encoding strongly stable ideals and the set of 
strongly stable ideals itself (in a fixed polynomial ring) is not a 
bijection. The elementary row operations used may produce matrices, 
which do not encode any saturated strongly stable ideal. Hence, one 
needs a special procedure within the algorithm to check whether or 
not a given matrix represents a saturated strongly stable ideal. To 
avoid this trial and error technique, we did not use these matrices.

The algorithm suggested in \cite{CLMR} is more similar to Algorithm 
\ref{alg-stable-Hilbpoly} in that it is recursive in the number of 
variables (and the degree of the Hilbert polynomial). However,  instead 
of increasing the degrees of the minimal generators to achieve the 
correct Hilbert polynomial, a number of new generators are added to 
make the Hilbert function as large as possible in a fixed degree. 
Certain generators are then removed in all possible combinations 
to produce the desired saturated strongly stable ideals.

Observe that our  approach has the advantage of allowing us to 
estimate the number of steps to produce an ideal with a given 
Hilbert polynomial (see Theorem \ref{link-all-ideals}).
\end{rem}

In Table 1 we present some experimental results for the number of
strongly stable ideals with a given Hilbert polynomial in a given
polynomial ring. Recall that the Hilbert polynomial is actually 
the Hilbert polynomial of the quotient by the ideal.

Table 1 illustrates that, fixing the Hilbert polynomial, the number 
of strongly stable ideals in a polynomial ring with $n+1$ variables 
having this Hilbert polynomial increases with $n$ initially until 
it becomes stable and independent of $n$. This is indicated by the 
rightmost column in the table.

Our next result  explains this observation.

\begin{prop} \label{min-num-vars}
If $p(z)$ is a Hilbert polynomial, written as in  Equation 
\eqref{formula-Hilbert-polynomial}, then the number of saturated 
strongly stable ideals with Hilbert polynomial $p(z)$ in 
$R=K[x_0,\ldots,x_n]$ is the same whenever $n \geq b_0 + d - 1$.
\end{prop}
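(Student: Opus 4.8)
The plan is to reduce the statement to the following claim and then prove that claim using the expansion bound in Theorem~\ref{link-all-ideals}.

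\emph{Claim.} Let $J \subset K[x_0,\ldots,x_m]$ be a saturated strongly stable ideal with Hilbert polynomial $p$, written as in Equation~\eqref{formula-Hilbert-polynomial} and of degree $d$. If $m \geq b_0 + d$, then $x_0 \in J$.

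Granting the Claim, I would argue as follows. Fix $n \geq b_0 + d - 1$ and let $\sigma$ be the degree-preserving $K$-algebra embedding $K[x_0,\ldots,x_n] \hookrightarrow K[x_1,\ldots,x_{n+1}]$ sending $x_i \mapsto x_{i+1}$. To a saturated strongly stable ideal $I$ of $K[x_0,\ldots,x_n]$ with Hilbert polynomial $p$, associate the ideal $I^{+} := (x_0) + \sigma(I)$ of $K[x_0,\ldots,x_{n+1}]$. A routine check (using $\Left(x_0) = \varnothing$ and that no minimal generator of $I$ is divisible by $x_n$) shows $I^{+}$ is again saturated and strongly stable, and the graded isomorphism $K[x_0,\ldots,x_{n+1}]/I^{+} \cong K[x_0,\ldots,x_n]/I$ shows $I^{+}$ has Hilbert polynomial $p$. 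Conversely, if $J \subset K[x_0,\ldots,x_{n+1}]$ is saturated strongly stable with Hilbert polynomial $p$, then $n+1 \geq b_0 + d$, so the Claim gives $x_0 \in J$, whence $J = (x_0) + \bigl(J \cap K[x_1,\ldots,x_{n+1}]\bigr)$; the intersection is a saturated strongly stable ideal of $K[x_1,\ldots,x_{n+1}]$, hence equals $\sigma(I)$ for a unique saturated strongly stable $I \subset K[x_0,\ldots,x_n]$, and that $I$ has Hilbert polynomial $p$. The assignments $I \mapsto I^{+}$ and $J \mapsto I$ are mutually inverse, so the two counts agree; since this holds for every $n \geq b_0+d-1$, the number in question is independent of $n$ in that range.

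To prove the Claim, suppose $x_0 \notin J$. Since $J$ is strongly stable and $x_0 = (x_0/x_i)\cdot x_i$, no variable lies in $J$, so $[J]_1 = 0$ and every minimal generator of $J$ has degree at least $2$; in particular $J \neq 0$, because the zero ideal in $m+1$ variables has Hilbert polynomial of degree $m > d$. By Theorem~\ref{link-all-ideals}, $J$ is obtained from $(1) = R^{(d)} = K[x_0,\ldots,x_{m-d}]$ by a sequence of at most $b_0$ expansions. The only expansion applicable to $(1)$ is the expansion of the monomial $1$, which produces $(x_0,\ldots,x_{m-d-1})$, an ideal with exactly $m-d$ linear minimal generators. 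Every later expansion replaces a minimal generator $x^A \neq 1$ by monomials of degree $\deg(x^A)+1 \geq 2$, hence removes at most one linear generator and creates none. Since $J$ has no linear generators, at least $m-d$ of the expansions after the first must each destroy one, so the sequence uses at least $(m-d)+1$ expansions. Comparing with the bound $b_0$ gives $m \leq b_0 + d - 1$, contradicting $m \geq b_0 + d$. This proves the Claim.

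The heart of the matter is the Claim, and in particular the bookkeeping that (i) from $(1)$ the only available move is to expand $1$, producing precisely the $m-d$ linear generators $x_0,\ldots,x_{m-d-1}$, and (ii) no subsequent expansion can create a linear generator while each destroys at most one, so that the number of linear generators to be eliminated can be weighed against the global bound $b_0$ coming from Theorem~\ref{link-all-ideals}. The remaining ingredients — checking that $I^{+}$ is saturated strongly stable with Hilbert polynomial $p$, and that the two maps are inverse — are straightforward.
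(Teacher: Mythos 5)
Your proof is correct and rests on the same two facts as the paper's own argument: the first expansion of $(1)$ in $R^{(d)}$ produces exactly $n-d$ linear generators, and Theorem \ref{link-all-ideals} permits at most $b_0$ expansions in total, each of which can destroy at most one linear generator, so $x_0$ survives once $n \geq b_0+d-1$. The only difference is that you make explicit the variable-shift bijection $I \mapsto (x_0)+\sigma(I)$ that the paper's terser proof leaves implicit, which is a welcome addition of rigor rather than a new approach.
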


\newpage

\begin{longtable}
{|c|c|r|r|r|r|r|} \hline
$p(z)$ & ${a_0,a_1,a_2}$ & $n = 3$ & $n = 6$ & $n = 9$ & $n = 12$ & $n \gg 0$ \\
\hline
   $4$ & $ 4,0,0$ &    3 (0) &    3  (0) &     3  (0) &     3  (0) &     3 \\
   $8$ & $ 8,0,0$ &   12 (0) &   19  (0) &    20  (0) &    20  (0) &    20 \\
  $12$ & $12,0,0$ &   44 (0) &  104  (0) &   117  (0) &   119  (0) &   119 \\
  $16$ & $16,0,0$ &  143 (0) &  504  (0) &   617  (1) &   640  (1) &   644 \\
  $20$ & $20,0,0$ &  425 (0) & 2262  (2) &  3034  (4) &  3223  (6) &  3271 \\
  $24$ & $24,0,0$ & 1193 (1) & 9578 (16) & 14140 (46) & 15425 (61) & 15818 \\
\hline
 $4z+2$ & $ 4,4,0$ &    14 (0) &    28  (0) &    28  (0) &    28  (0) &    28 \\
 $4z+6$ & $ 8,4,0$ &    94 (0) &   394  (0) &   433  (0) &   434  (0) &   434 \\
$4z+10$ & $12,4,0$ &   469 (0) &  3702  (2) &  4536  (3) &  4627  (5) &  4632 \\
$4z+14$ & $16,4,0$ &  1939 (1) & 27486 (28) & 37792 (60) & 39462 (73) & 39677 \\
\hline
$8z-16$ & $ 4,8,0$ &   10 (0) &    18 (0) &    18  (0) &    18  (0) &    18 \\
$8z-12$ & $ 8,8,0$ &   66 (0) &   213 (0) &   232  (0) &   233  (0) &   233 \\
$8z- 8$ & $12,8,0$ &  347 (0) &  1911 (1) &  2268  (2) &  2310  (2) &  2313 \\
$8z- 4$ & $16,8,0$ & 1576 (0) & 14490 (7) & 18812 (18) & 19510 (32) & 19607 \\
\hline
$2z^2+ 6$ & $ 4,0,4$ &   3 (0) &    18 (0) &    19  (0) &    19  (0) &    19 \\
$2z^2+10$ & $ 8,0,4$ &  12 (0) &   224 (0) &   268  (0) &   271  (0) &   271 \\
$2z^2+14$ & $12,0,4$ &  44 (0) &  2073 (1) &  2835  (2) &  2930  (3) &  2938 \\
$2z^2+18$ & $16,0,4$ & 143 (0) & 15883 (9) & 24927 (32) & 26468 (63) & 26687 \\
\hline
$2z^2+4z-12$ & $ 4,4,4$ &  14 (0) &   45 (0) &    46 (0) &    46  (0) &    46 \\
$2z^2+4z- 8$ & $ 8,4,4$ &  94 (0) &  776 (0) &   868 (0) &   872  (1) &   872 \\
$2z^2+4z- 4$ & $12,4,4$ & 469 (0) & 9165 (4) & 11417 (9) & 11636 (15) & 11649 \\
\hline
$2z^2+8z-46$ & $ 4,8,4$ &  10 (0) &   37 (1) &   38 (1) &   38  (2) &   38 \\
$2z^2+8z-42$ & $ 8,8,4$ &  66 (0) &  588 (1) &  667 (1) &  671  (2) &  671 \\
$2z^2+8z-38$ & $12,8,4$ & 347 (0) & 6535 (3) & 8281 (8) & 8464 (13) & 8476 \\
\hline
$4z^2-16z+40$ & $ 4,0,8$ &   3 (0) &   18 (0) &   19 (0) &   19 (0) &   19 \\
$4z^2-16z+44$ & $ 8,0,8$ &  12 (0) &  224 (0) &  268 (0) &  271 (0) &  271 \\
$4z^2-16z+48$ & $12,0,8$ &  44 (0) & 2073 (1) & 2835 (3) & 2930 (5) & 2938 \\
\hline
$4z^2-12z+ 6$ & $ 4,4,8$ &  14 (0) &   45 (0) &    46  (0) &    46  (0) &    46 \\
$4z^2-12z+10$ & $ 8,4,8$ &  94 (0) &  761 (0) &   853  (1) &   857  (1) &   857 \\
$4z^2-12z+14$ & $12,4,8$ & 469 (0) & 8662 (4) & 10851 (13) & 11069 (16) & 11082 \\
\hline
$4z^2-8z-44$ & $ 4,8,8$ &  10 (0) &   37 (0) &   38 (1) &   38  (1) &   38 \\
$4z^2-8z-40$ & $ 8,8,8$ &  66 (0) &  588 (1) &  667 (1) &  671  (1) &  671 \\
$4z^2-8z-36$ & $12,8,8$ & 347 (0) & 6523 (3) & 8269 (7) & 8452 (12) & 8464 \\
\hline 
\caption[Number of strongly stable ideals]{The number (and time of
computation in seconds) of saturated strongly stable ideals with a 
given Hilbert polynomial, $p(z)$, in $K[x_0, \ldots, x_n]$ for 
several values of $n$}
\end{longtable}

\begin{proof}
The first expansion, the expansion of  1 in $R^{(d)}$,  gives 
$(x_0, \ldots, x_{n-d-1})$, an ideal with $n-d$ variables. By 
Theorem \ref{link-all-ideals}, the number of the remaining 
expansions will be at most $b_0-1$ (and depends upon how the 
expansions are chosen). It follows that the max index of any 
expanded monomial is at least $n-d-b_0 +1$. Hence, if 
$b_0-1 \leq n-d$, then the number of saturated strongly stable 
ideals generated is not constrained by the number of variables.
\end{proof}

The bound on the number of variables given in the last result 
is optimal in some cases.

\begin{ex} \label{ex:optimality-variable-bound}
(i) Fix integers $d \geq 0$ and $b_0 \geq 1$. Consider the 
saturated strongly stable ideals $I$ of $R = K[x_0,\ldots,x_n]$ 
with Hilbert polynomial $$ p(z) = \binom{z+d}{d} + b_0 - 1. $$
Then, using the notation of Theorem \ref{L_p}, $a_0 = b_0 - 1,\; 
a_1 = \cdots = a_{d-1} = 0$, and $a_d =1$. Following Algorithm 
\ref{alg-stable-Hilbpoly}, the first expansion will produce the 
ideal $I^{(d)} = (x_0, \ldots, x_{n-d-1}) \subset R^{(d)}$. The 
remaining $b_0-1$ expansions all occur in $R$. If $n=b_0+d-1$, 
then expanding all of the $n-d = b_0 - 1$ variables will produce 
a saturated strongly stable ideal with the desired Hilbert 
polynomial that is generated by quadrics. However, if $n \leq b_0+d-2$, 
then any $b_0 - 1$ expansions of $I^{(d)}$ will produce an ideal 
having a minimal generator whose degree is at least 3. Hence the 
bound on $n$ in Proposition \ref{min-num-vars} is optimal for this 
Hilbert polynomial.

(ii) Not every Hilbert polynomial will achieve this bound. Consider 
$p(z)=3z (=\binom{z+1}{2}-\binom{z-2}{2}+\binom{z}{1}-\binom{z-3}{1}$). 
If $n \geq 2$, there is exactly one saturated strongly stable ideal 
for this Hilbert polynomial even though $b_0 + d - 1 = 3$.
(The Hilbert polynomial of the ideal generated by 
$(x_0,\ldots,x_{n-3},x_{n-2}^3)$ is $p(z) = 3z$, while the Hilbert 
polynomial of the ideal $(x_0,\ldots,x_{n-4},x_{n-3}^2,x_{n-3} 
x_{n-2}, x_{n-2}^2)$ is $3 z + 1 \neq p(z)$)
\end{ex}

It is known that the lexicographic ideal has the worst
Castelnuovo-Mumford regularity among all saturated ideals with a
fixed Hilbert polynomial (see  \cite{GG}, \cite{DB}, and
\cite{AR2}). Theorem \ref{link-all-ideals} provides a quick new
argument. It also allows us to discuss the extremal ideals. We
denote by $\gin I$ the generic initial ideal of the ideal $I$ with
respect to the reverse lexicographic order.

\begin{thm}
  \label{thm:worst-reg}
Let $I \neq R$ be a saturated homogenous ideal of $R$. Write the 
Hilbert polynomial, $p$, of $R/I$ as in Equation 
\eqref{formula-Hilbert-polynomial}. Then the  Castelnuovo-Mumford 
regularity of $I$ satisfies $$ \reg I \leq b_0. $$
Furthermore, if $I$ is strongly stable, then equality is true if 
and only if $I = L_p$.

Moreover, if $I$ is any saturated homogenous ideal and 
$\charf K = 0$,  then $\reg I = b_0$ if and only if 
$\gin I = L_P$ and $I$ is of the form
\begin{equation}
  \label{eq:gens-extre-ideal}
I = (l_0,\ldots,l_{n-d-2},
f_d l_{n-d-1}, f_d f_{d-1} l_{n-d},\ldots, f_d \ldots f_{t+1}
l_{n-t-2}, f_d \ldots f_t)
\end{equation}
where $0 \leq t \leq d$, every $f_i \neq 0$ is a homogenous 
polynomial of degree $a_i \geq 0,\; a_n, a_t \geq 1$,  every 
$l_i$ is a linear form, and $I$ has (as indicated) $n+1-t$ minimal 
generators.  (Note that when $n = t$ the  ideal $I$ is simply 
defined as $I = (f_d)$.)
\end{thm}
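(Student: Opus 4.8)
The plan is to pull everything back to the strongly stable case via generic initial ideals and then feed it into Theorem~\ref{link-all-ideals}. Recall that $\reg I = \reg \gin I$ by Bayer--Stillman (valid in every characteristic), that $\gin I$ is saturated whenever $I$ is (we work with the reverse lexicographic order), and that $p_{R/\gin I} = p$. In characteristic zero $\gin I$ is strongly stable, so the bound $\reg I \le b_0$ and the equivalence ``$\reg I = b_0 \iff \gin I = L_p$'' both reduce to statements about saturated strongly stable ideals; in arbitrary characteristic the bound $\reg I \le b_0 = \reg L_p$ is the classical fact that $L_p$ has the worst regularity, for which I would simply cite the references quoted just before the theorem. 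Thus the work splits into: (a) the strongly stable case of part~(1); (b) the equality characterization for strongly stable ideals, i.e.\ part~(2); (c) describing the ideals with $\gin I = L_p$, i.e.\ part~(3).

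\textbf{The inequality and its equality case for strongly stable $J$.} For a saturated strongly stable ideal $J$ the Eliahou--Kervaire resolution (Remark~\ref{Hilbert-polynomial-and-series}, Equation~\eqref{compute-Betti-numbers}) gives $\reg J = \max_i d_i$, the top degree of a minimal generator. By Theorem~\ref{link-all-ideals}, $J$ is reached from $(1)=R^{(d)}$ by at most $b_0$ expansions (in the rings $R^{(d)},\dots,R^{(0)}=R$); the first expansion turns $1$ into linear generators, and every further expansion replaces one generator of degree $e$ by generators of degree $e+1$, hence raises $\max_i d_i$ by at most $1$. So $\reg J \le 1+(b_0-1)=b_0$, the promised quick argument. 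For the equality: the direction $\reg L_p = b_0$ is a direct inspection of the generating set in Theorem~\ref{L_p} (the last minimal generator has degree $a_d+\dots+a_0=b_0$, and when $a_0=0$ the listed generator of degree $b_0+1$ is not minimal). For the converse I would induct on $d$. If $\reg J = b_0$ then the chain of inequalities above must be tight: the sequence has length exactly $b_0$, uses exactly $a_j$ expansions in $R^{(j)}$, and \emph{every} expansion raises the maximal generator degree by exactly $1$. Tracking this through the rings shows $\reg J^{(1)} = b_1$, and since $J^{(1)}\subset R^{(1)}$ is saturated strongly stable with Hilbert polynomial $\Delta p$ of degree $d-1$, the induction hypothesis gives $J^{(1)} = L_{\Delta p}$; hence the double saturation of $J$ is the lexsegment ideal $L_{\widetilde p} = L_{\Delta p}R$, and $J$ is obtained from $L_{\widetilde p}$ by $a_0$ degree-increasing expansions. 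The remaining point—the main technical step—is that such a degree-increasing expansion of an almost lexsegment ideal is forced to be a lex expansion: a minimal generator of maximal degree that is not the lex-last one in that degree fails to be expandable, because strong stability forces some right-shift of it (moving up its top variable) to be a minimal generator. Granting this, the $a_0$ expansions are the lex expansions, so $J=L_p$ by Corollary~\ref{lex-expan-almost-lex-ideals}; the base case $d=0$ is the analogous (and easier) statement that the only degree-increasing expansions from $(1)$ are $(x_0,\dots,x_{n-1}), (x_0,\dots,x_{n-2},x_{n-1}^2),\dots,L_p$.

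\textbf{Part~(3).} With the reductions above in place and $\charf K=0$, Bayer--Stillman and part~(2) give $\reg I = b_0 \iff \reg \gin I = b_0 \iff \gin I = L_p$, so it remains to identify the ideals with $\gin I = L_p$ as exactly those of the shape \eqref{eq:gens-extre-ideal}. Since $L_p = (x_0,\dots,x_{n-d-2})$ together with an iterated ``staircase'' supported in the last $d+2$ variables, I would argue by induction on $d$: the $n-d-1$ linear minimal generators of $L_p$ force $I$ to contain linearly independent linear forms $l_0,\dots,l_{n-d-2}$ with $\gin(l_0,\dots,l_{n-d-2})=(x_0,\dots,x_{n-d-2})$; passing to $\bar R = R/(l_0,\dots,l_{n-d-2}) \cong K[y_0,\dots,y_{d+1}]$, the image of $I$ must be $f_d\cdot \bar I'$ where $f_d$ is a form of degree $a_d$ and $\bar I'$ is an ideal of the same shape one dimension down, using the known behaviour of generic initial ideals under passing to a generic hyperplane section and under multiplication by a generic form. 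Conversely, an ideal of the form \eqref{eq:gens-extre-ideal} with nonzero components becomes, after a generic coordinate change, one of the same form with generic components, and the same inductive computation produces $\gin I = L_p$; the integer $t$ and the degrees $a_i$ are read off from how far down the staircase of $L_p$ one descends (with $t=d$, i.e.\ $I=(f_d)$, occurring exactly when $a_d=b_0$). The hardest part here, and the place I expect to spend the most effort, is making precise and proving these two gin-theoretic facts and organizing the induction so that the bookkeeping of the exponents $a_i$ and of $t$ comes out exactly as in \eqref{eq:gens-extre-ideal}; the expandability lemma invoked in part~(2) is the other step that will need careful verification.
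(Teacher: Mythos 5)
Your overall architecture is close to the paper's: reduce to strongly stable ideals, get the bound $\reg I\leq b_0$ from Theorem \ref{link-all-ideals} via the Eliahou--Kervaire formula for regularity, and characterize equality by analyzing a chain of degree-raising expansions. But your ``main technical step'' for part (2) --- that a degree-increasing expansion of an almost lexsegment ideal must be a lex expansion, because a maximal-degree minimal generator that is not lex-last fails to be expandable --- is false as stated, and the paper itself contains the counterexample. In $K[a,b,c,d]$ the ideal $I_1=(a^2,ab,ac,b^2)$ is almost lexsegment, all of its minimal generators have the maximal degree $2$, the lex-last one is $b^2$, yet $ac$ is expandable (its only right-shift is $bc$, which is not a minimal generator), and expanding it raises the maximal generator degree to $3$ while producing the non-almost-lexsegment ideal $(a^2,ab,ac^2,b^2)$. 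Your argument can be repaired, but only by exploiting more structure than ``almost lexsegment'': the first ideal in your chain is the very rigid ideal $L_{\widetilde{p}}$, for whose staircase of generators the claim does hold, and after the first expansion the later degree-raising expansions are forced (each new generator $x^Ax_i$ with $i<n-1$ has the right-shift $x^Ax_{i+1}$ among the generators, so only $x^Ax_{n-1}$ is expandable --- this is exactly where your ``moving up the top variable'' reasoning is valid). The paper avoids the issue by inducting on $b_0$ rather than on $d$: its induction hypothesis identifies the penultimate ideal as a lexicographic ideal $L_{p'}$, so the expandability claim is only ever invoked for lexicographic ideals.

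Two further route differences are worth flagging. For the bound on an arbitrary saturated ideal in arbitrary characteristic you propose simply to cite the classical references; the paper instead derives it from the strongly stable case by passing to the almost lexsegment ideal with the same Hilbert function (Bigatti--Hulett--Pardue), which keeps the advertised ``quick new argument'' self-contained. For part (3), the identification of the ideals with $\gin I=L_p$ as those of the form \eqref{eq:gens-extre-ideal} is quoted in the paper from Theorem 4.4 and Lemma 3.4 of \cite{reg-extdeg}; your direct inductive gin computation is plausible in outline but, as you acknowledge, leaves precisely the hard gin-theoretic facts unproved, so as written it is a sketch rather than a proof.
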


\begin{proof}
First, we show the claims when $I$ is a strongly stable ideal. The
Eliahou-Kervaire resolution shows that the regularity of $I$ is the
maximal degree of a minimal generator of $I$. By Theorem
\ref{link-all-ideals} we know that $I$ can be obtained from the
ideal $(1) = R^{(d)}$ by at most $b_0$ expansions. Since each
expansion replaces a monomial by monomials whose degree is one more,
it follows immediately that the degrees of the minimal generators of
$I$ are at most $b_0$.

In order to characterize equality we use induction on $b_0 \geq 1$.
If  $b_0 = 1$, then $I$ is generated by linear forms, and the claim
follows. Let $b_0 > 1$, and assume that $I$ has a minimal generator
of degree $b_0$. Then, by the above argument, $I$ must have been
obtained from the ideal $(1) = R^{(d)}$ by exactly $b_0$ expansions.
Denote by $J'$ the ideal obtained by the first $b_0 - 1$ expansions,
and put $J = J' R$. Then $J$ must have a minimal generator of degree
$b_0 - 1$. Write the Hilbert polynomial of $R/J$ as
\begin{equation*}
p'(z)=\sum_{i=0}^{d} \left[\binom{z+i}{i+1}-\binom{z+i-b'_i}{i+1}\right].
\end{equation*}
Then $b_0' = b_0 - 1$. Hence, the induction hypothesis provides that 
$J = L_{p'}$. It follows that among the minimal generators of $J'$ 
having degree $b_0 -1$ only the smallest one in the lexicographic 
order is expandable. Expanding it, we get $I = L_p$ (see Remark 
\ref{rem-expan-lex-ideal}).

Second, let $I$ be an arbitrary saturated homogenous ideal with the
given Hilbert polynomial. Passing from $I$ to the almost lexsegment
ideal $I^*$ with the same Hilbert function as $I$ can only increase
the regularity by a result of Bigatti, Hulett, and Pardue (see
\cite{AB}, \cite{HH}, \cite{KP}). Since almost lexsegment ideals are
strongly stable we get $\reg I \leq \reg I^* \leq b_0$.

Finally, assume that the base field $K$ has characteristic zero.
Then   $\gin I$ is strongly stable and has the same regularity as
$I$ by \cite{BS}.  Hence, by the first part of the proof, $\reg I =
b_0$ if and only if $\gin I = L_p$. The claimed description of $I$
in this case now follows by Theorem 4.4 and Lemma 3.4 in
\cite{reg-extdeg}.
\end{proof}

Combined with the main result of Murai and Hibi in
\cite{MH-Gotzmann}, we obtain the following consequence. We would
like to thank Jeff Mermin for pointing this out.

Recall that a homogeneous ideal $I$ of $R = K[x_0,\ldots,x_n]$ is a
Gotzmann ideal if it has  as many minimal generators as the
lexsegment ideal $L_h \subset R$ corresponding to the Hilbert
function of $I$. Notice that an ideal $I$ of $R$ is saturated if it
has at most $n$ minimal generators.

\begin{cor}
  \label{cor:Gotzmann}
Let $I \subset R$ be a saturated homogeneous ideal, where $\charf K
= 0$. Write the Hilbert polynomial of $R/I$ as in Equation
\eqref{formula-Hilbert-polynomial}. Then the following conditions
are equivalent:
\begin{itemize}
  \item[(a)] $\reg I = b_0$;

  \item[(b)] $\gin I$ is a lexicographic ideal;

  \item[(c)] $I$ is a Gotzmann ideal with at most $n$ minimal
  generators;

  \item[(d)] $I$ is an ideal of the form as specified in Equation
  \eqref{eq:gens-extre-ideal}.
\end{itemize}
\end{cor}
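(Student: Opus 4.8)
The plan is to read the equivalence of (a), (b) and (d) off Theorem~\ref{thm:worst-reg}, and then to attach (c) by quoting the classification of Gotzmann ideals from~\cite{MH-Gotzmann}.

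I would begin with (a)~$\Leftrightarrow$~(b). Since we work with the reverse lexicographic order and $I$ is saturated, $\gin I$ is saturated as well, and it has the same Hilbert function, hence the same Hilbert polynomial $p$, as $R/I$. By the uniqueness part of Theorem~\ref{L_p}, a saturated lexsegment ideal with Hilbert polynomial $p$ must equal $L_p$, so condition (b) says precisely that $\gin I = L_p$. Theorem~\ref{thm:worst-reg} already yields (a)~$\Rightarrow$~$\gin I = L_p$. For the converse, $\gin I = L_p$ gives $\reg I = \reg \gin I = \reg L_p = b_0$, using~\cite{BS} for the first equality (here $\charf K = 0$ is used) and the strongly stable case of Theorem~\ref{thm:worst-reg} for the last, as $L_p$ has a minimal generator of degree $b_0$. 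Hence (a)~$\Leftrightarrow$~(b).

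Next, (a)~$\Leftrightarrow$~(d). The implication (a)~$\Rightarrow$~(d) is part of Theorem~\ref{thm:worst-reg}. For the reverse, I would verify that any ideal of the shape~\eqref{eq:gens-extre-ideal} has $\gin I = L_p$, and hence satisfies (b), so (a). Indeed, after a generic change of coordinates the forms $l_i$ and $f_i$ become generic of the prescribed degrees $1$ and $a_i$, and the generic initial ideal of such an ideal can be computed generator by generator to be exactly the generating set of $L_p$ exhibited in Theorem~\ref{L_p}; taking the $l_i$ and $f_i$ to be monomials already makes this literal, and the general case is the content of Theorem~4.4 and Lemma~3.4 of~\cite{reg-extdeg}, the input used for the description in Theorem~\ref{thm:worst-reg}. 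Note also that~\eqref{eq:gens-extre-ideal} visibly involves at most $n$ minimal generators.

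It remains to incorporate (c). Here I would invoke the main result of~\cite{MH-Gotzmann}, which classifies the Gotzmann ideals of the polynomial ring; in characteristic zero this amounts to a description of the generic initial ideals that can occur, and in particular it pins down the Gotzmann ideals having at most $n$ minimal generators. One checks easily that if $\gin I = L_p$ then $\beta_0(I) \le \beta_0(\gin I) = \beta_0(L_p) \le n$ by the generator count in Theorem~\ref{L_p}, so the cardinality clause of (c) is automatic from (b); conversely, by the remark preceding the corollary an ideal of $R$ with at most $n$ minimal generators is saturated, so (c) does not leave the class of saturated ideals where Theorem~\ref{thm:worst-reg} applies. Matching the Murai--Hibi description with the statement $\gin I = L_p$ then gives (b)~$\Leftrightarrow$~(c), and the chain (a)~$\Leftrightarrow$~(b)~$\Leftrightarrow$~(c)~$\Leftrightarrow$~(d) closes up. I expect the genuine obstacle to be exactly this final matching: extracting from the combinatorial classification in~\cite{MH-Gotzmann} that, for a saturated ideal, being Gotzmann with at most $n$ generators is possible only when the generic initial ideal is already the full lexicographic ideal $L_p$ — equivalently, that the equality $\beta_0(I) = \beta_0(\gin I)$ forced by the Gotzmann property cannot hold for a smaller $\gin I$. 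Everything else is bookkeeping with the dictionary between $\gin I$, the invariants $b_0$ and $a_0, \dots, a_d$, and the generator shapes in Theorems~\ref{L_p} and~\ref{thm:worst-reg}.
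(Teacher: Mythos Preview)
Your proposal is essentially the paper's approach: the equivalence of (a), (b), and (d) is read off Theorem~\ref{thm:worst-reg} (together with~\cite{BS} for $\reg I = \reg \gin I$), and (c) is attached via the main theorem of~\cite{MH-Gotzmann}. The only difference is in the bridge to (c): the paper links (c) directly to (d), observing that the explicit shape~\eqref{eq:gens-extre-ideal} is exactly that of a \emph{canonical critical ideal} (up to a change of coordinates), which is the notion appearing in Theorem~1.1 of~\cite{MH-Gotzmann}; this makes the ``genuine obstacle'' you flag disappear, since no matching of $\gin I = L_p$ against the Murai--Hibi classification is needed.
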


\begin{proof}
Conditions (a), (b), and (d) are equivalent by Theorem
\ref{thm:worst-reg}. The equivalence to Condition (c) follows by
  Theorem 1.1 in \cite{MH-Gotzmann} because (d) shows that $I$ is a
  canonical
  critical ideal up to a coordinate transformation.
\end{proof}

We conclude with a crude estimate on the number of strongly stable
ideals with a given Hilbert polynomial.

\begin{cor}
  \label{cor:number-strongly-stable}
Let $p$ be the Hilbert polynomial of a graded quotient of $R$. 
Using the notation of Theorem \ref{L_p}, put $c = 
\min\{n, b_0 +d -1\}$. Then the number of saturated strongly stable 
ideals in $R$ with Hilbert polynomial $p$ is at most
$$ \binom{\binom{c-d + b_0 -1}{b_0 - 1} + 1}{a_d} 
\binom{\binom{c- (d -1) + b_0 -1}{b_0 - 1} + 1}{a_{d-1}} \cdot 
\ldots \cdot \binom{\binom{c + b_0 -1}{b_0 - 1} + 1}{a_0}. $$
\end{cor}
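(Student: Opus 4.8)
The plan is to combine Theorem~\ref{link-all-ideals} with Proposition~\ref{min-num-vars}, turning the question into a count of expansion sequences produced by Algorithm~\ref{alg-stable-Hilbpoly}. First I reduce the number of variables: if $n \geq b_0 + d - 1$, then by Proposition~\ref{min-num-vars} the number of saturated strongly stable ideals of $R$ with Hilbert polynomial $p$ coincides with the one for $K[x_0,\ldots,x_{b_0+d-1}]$, while if $n < b_0+d-1$ there is nothing to change; in both cases this number equals the count in $K[x_0,\ldots,x_c]$ with $c = \min\{n, b_0+d-1\}$, so I may assume $R = K[x_0,\ldots,x_c]$. By Theorem~\ref{link-all-ideals} every such ideal $I$ is produced by Algorithm~\ref{alg-stable-Hilbpoly} as a sequence of expansions organized into stages $j = d, d-1, \ldots, 0$: stage $j$ takes $I^{(j+1)} R^{(j)}$ to $I^{(j)}$ inside $R^{(j)} = K[x_0,\ldots,x_{c-j}]$ using at most $a_j$ expansions (stage $d$ starts from $(1)=R^{(d)}$ and uses exactly $a_d$ of them, the first expanding $1$). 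By Remark~\ref{make-expan-unique} this sequence is unique once the canonical monomial order is fixed, and the \emph{set} of monomials expanded during stage $j$ determines $I^{(j)}$ from $I^{(j+1)}$ (carry out the expansions in that order). So it suffices to bound, stage by stage, the number of possibilities for that set.

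Next I locate the expanded monomials. I claim that every monomial expanded at stage $j$ is divisible by none of $x_{c-j}$ (so it lies in $K[x_0,\ldots,x_{c-j-1}]$) and has degree at most $b_0-1$. The first point follows by induction, since $I^{(j+1)} R^{(j)}$ is extended from $R^{(j+1)}$ and, by Definition~\ref{expan-contr}(i), an expansion in $R^{(j)}$ replaces a monomial $x^A$ by multiples of $x^A$ involving only $x_r,\ldots,x_{c-j-1}$. The degree bound holds because expanding a degree-$e$ monomial creates a minimal generator of degree $e+1$, an ideal obtained from $(1)$ by $m$ expansions has all minimal generators of degree at most $m$, and by Theorem~\ref{link-all-ideals} the total number of expansions leading to $I$ is at most $b_0$. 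Hence the stage-$j$ expanded monomials form a subset of the monomials of degree at most $b_0-1$ in the $c-j$ variables $x_0,\ldots,x_{c-j-1}$, a set of size $N_j := \binom{c-j+b_0-1}{b_0-1}$; in stage $d$ this subset must contain $1$.

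The remaining, and most delicate, point is to bound the number of such subsets, for a fixed $I^{(j+1)}$, by $\binom{N_j+1}{a_j}$. The naive estimate ``a subset of size at most $a_j$ of an $N_j$-element pool'' only gives $\sum_{k\le a_j}\binom{N_j}{k}$, which is too large, so one must use more structure: the exact number $k_j = \Delta^j p - p_{R^{(j)}/I^{(j+1)}R^{(j)}} \leq a_j$ of stage-$j$ expansions is prescribed by $I^{(j+1)}$ (so the subset has size exactly $k_j$), and, via Lemma~\ref{change-of-Hilbert-funct}, this subset is carried bijectively onto the set of degree-$(b_0-1)$ monomials that stage $j$ removes from $I^{(j+1)}R^{(j)}$, a set which is downward closed for the strong stability order. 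Using this, each configuration can be injectively encoded as an $a_j$-element subset of the $N_j$ monomials together with a single sentinel symbol, yielding the bound $\binom{N_j+1}{a_j}$ (in stage $d$ one even gets $\binom{N_d-1}{a_d-1}\le\binom{N_d+1}{a_d}$). Making this encoding airtight --- in particular showing that the downward-closed structure compensates for the cases with $k_j \le a_j - 2$ --- is where I expect the real work to lie.

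Finally, multiplying the per-stage bounds $\binom{N_j+1}{a_j}$ over $j = d, d-1, \ldots, 0$ and substituting $N_j = \binom{c-j+b_0-1}{b_0-1}$ gives exactly the asserted product.
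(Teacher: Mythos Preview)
Your outline tracks the paper's argument almost step for step: reduce to $c+1$ variables via Proposition~\ref{min-num-vars}, invoke Theorem~\ref{link-all-ideals} to bound the number of stage-$j$ expansions by $a_j$, observe that the expanded monomials lie in $K[x_0,\ldots,x_{c-j-1}]$, and bound their degree by $b_0-1$ (the paper cites Theorem~\ref{thm:worst-reg} for this, while you derive it directly from the total expansion count --- both are fine). The single point of divergence is the combinatorial count at each stage.

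Here the paper does \emph{not} pursue anything like the downward-closed/sentinel encoding you sketch. It simply bounds the number of stage-$j$ choices by the number of subsets of size at most $a_j$ drawn from an $N_j$-element pool, and then asserts
\[
\binom{N_j}{0}+\binom{N_j}{1}+\cdots+\binom{N_j}{a_j}=\binom{N_j+1}{a_j}.
\]
Your instinct that the left side can exceed the right is correct: by Pascal's rule $\binom{N_j+1}{a_j}=\binom{N_j}{a_j}+\binom{N_j}{a_j-1}$, so the displayed identity fails whenever $a_j\ge 2$ (for instance $N_j=3$, $a_j=2$ gives $7\ne 6$). In other words, the paper's proof is shorter precisely because it glosses over the step you flagged as delicate; the extra structure you are reaching for is not supplied there. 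Your encoding idea is a reasonable direction for repairing the bound, but as you yourself acknowledge, it is not yet a complete argument.
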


\begin{proof}
Assume first that $n \leq b_0 + d -1$, that is, $c = n$.

Using the notation of Theorem \ref{link-all-ideals}, it takes at 
most $a_j$ expansions to take $I^{(j+1)} \cdot R^{(j)}$ to $I^{(j)}$. 
By Theorem \ref{thm:worst-reg}, the degree of each expanded monomial 
is at most $b_0 -1$. Moreover, we expand only monomials in 
$K[x_0,\ldots,x_{n-j-1}]$. There are $N_j=\binom{n-j+b_0-1}{b_0-1}$ 
such monomials whose degree is at most $b_0 - 1$. For expanding at 
most $a_j$ of them, there are at most
$$ \binom{N_j}{0} + \binom{N_j}{1} + \cdots + \binom{N_j}{a_j} = 
\binom{N_j + 1}{a_j} $$
possibilities. Since we take $I^{(j+1)}$ to $I^{(j)}$ for $j = 
d,d-1,\ldots,0$, the claim follows in this case.

Second, if $n \geq b_0 + d-1$, then the number of strongly stable 
ideals is the same as for $n = b_0 + d-1$ by Proposition 
\ref{min-num-vars}. This concludes the argument.
\end{proof}



\begin{thebibliography}{ll}

\bibitem{DB}
{\sc D.~Bayer}, {\em The division algorithm and the Hilbert scheme},
Ph.D.\ Thesis, Harvard University, 1982.

\bibitem{BS}
{\sc D.~Bayer, M.~Stillman}, {\em A criterion for detecting
m-regularity}, Invent.\ Math.\ {\bf 87} (1987),  1--11.

\bibitem{AB}
{\sc A.~Bigatti}, {\em Upper bounds for the Betti numbers of a given
Hilbert function}, Comm.\ Algebra {\bf 21} (1993), 2317--2334.

\bibitem{CM}
{\sc G.~Caviglia, S.~Murai}, {\em Sharp upper bounds of the Betti
numbers for a given Hilbert polynomial}, Preprint, 2010
(arXiv:1010.0457v1).

\bibitem{CLMR}
{\sc F.~Cioffi, P.~Lella, M.~G.~Marinari, M.~Roggero}, {\em 
Segments and Hilbert schemes of points}, Discrete Math. {\bf 311}
(2011), 2238--2252.

\bibitem{DE}
{\sc D.~Eisenbud}, \emph{Commutative Algebra with a View Toward
Algebraic Geometry}. Graduate Texts in Mathematics {\bf 150},
Springer, 1995.

\bibitem{EK}
{\sc S.~Eliahou, M.~Kervaire}, {\em Mininimal resolutions of some
monomial ideals}, J.~Algebra {\bf 129} (1990), 1--25.

\bibitem{FMS}
{\sc C.~Francisco, J.~Mermin, J.~Schweig}, {\em Borel generators},
J.\ Algebra {\bf 332} (2011), 522--542.

\bibitem{KG}
{\sc K.~Gehrs}, {\em On stable monomial ideals}, Diploma Thesis,
University of Paderborn, 2003.

\bibitem{GG}
{\sc G.~Gotzmann}, {\em Eine Bedingung f\"ur die Flachheit und das
Hilbertpolynom eines graduierten Ringes},  Math.\ Z.\ {\bf 158}
(1978),  61--70.

\bibitem{GS}
{\sc D.~Grayson, M.~Stillman}, {\em Macaulay2, a software system 
for research in algebraic geometry}, Available at
{http://www.math.uiuc.edu/Macaulay2/}.

\bibitem{MLG}
{\sc M.~L.~Green}, {\em Generic initial ideals}, In J.~Elias,
J.M.~Giral, R.M.~Mir\'o-Roig, S.~Zarzuela (eds.), \emph{Six Lectures
on Commutative Algebra}, Progress in Mathematics {\bf 166},
Birkh\"auser, 1998,  119--186.

\bibitem{MLG2}
{\sc M.~L.~Green}, {\em Restrictions of linear series to hyperplanes
and some results of Macaulay and Gotzmann}, In E.~Ballico and
C.~Ciliberto (eds.), {\em Algebraic curves and projective geometry},
LNM {\bf 1389}, Springer, 76--86.

\bibitem{RH}
{\sc R.~Hartshorne}, {\em Connectedness of the Hilbert Scheme},
Inst.~Hautes~\'Etudes~Sci.~Publ.~Math.\ {\bf 29} (1966), 261--309.

\bibitem{RH2}
{\sc R.~Hartshorne}, {\em Algebraic Geometry}, Graduate Texts in
Mathematics {\bf 52}, Springer, 1977.

\bibitem{HeHi}
{\sc J.~Herzog, T.~Hibi}, \emph{Monomial ideals}. Graduate Texts in
Mathematics {\bf 260}, Springer, 2011.

\bibitem{HH}
{\sc H.~Hulett}, {\em Maximum Betti numbers of homogeneous ideals
with a given Hilbert function}, Comm.\ Algebra {\bf 21} (1993),
2335--2350.

\bibitem{MH}
{\sc S.~Murai, T.~Hibi}, {\em The depth of an ideal with a given Hilbert
function}, Proc. Amer. Math. Soc. 136 (2008), no. 5, 1533--1538.

\bibitem{MH-Gotzmann}
{\sc S.~Murai, T.~Hibi}, {\em  Gotzmann ideals of the polynomial
ring}, Math.\ Z.\ {\bf 260} (2008), 629--646.

\bibitem{reg-extdeg}
{\sc U.~Nagel}, {\em Comparing Castel\-nuovo-\-Mumford regularity and
extended degree: the  borderline cases}, Trans.\ Amer.\ Math.\ Soc.\
{\bf 357} (2005), 3585--3603.

\bibitem{NS}
{\sc R.~Notari, M.~L.~Spreafico}, {\em A stratification of Hilbert
schemes by initial ideals and applications},  Manuscripta Math.\
{\bf 101} (2000),  429--448.

\bibitem{KP}
{\sc K.~Pardue}, {\em Deformation classes of graded modules and
maximal Betti numbers}, Illinois J.\ Math.\ {\bf 40} (1996),
564--585.

\bibitem{AR}
{\sc A.~Reeves}, {\em On the combinatorial structure of the Hilbert
Scheme}, Ph.D.~thesis, Cornell University, 1992.

\bibitem{AR2}
{\sc A.~Reeves}, {\em The radius of the Hilbert scheme},
J.~Algebraic  Geom.\ {\bf 5} (1995), 639--657.

\bibitem{RS}
{\sc A.~Reeves, M.~Stillman}, {\em Smoothness of  the lexicographic
point}, J.~Algebraic Geom.\ {\bf 6} (1997), 235--246.

\bibitem{GV}
{\sc G.~Valla}: {\em On the Betti numbers of perfect ideals},
Compositio Math.\ {\bf 91} (1994), 305--319.

\end{thebibliography}
\end{document}